%%%%%%%%%%%%%%%%%%%%%%%%%%%%%%%%%%%%%%%%%%%%%%%%%%%%%%%%%%%%%%%%%%%%%
%               %
%                                                                   %
%                    %
%                                                                   %
%            as submitted to                                        %
%                                                                   %
                    %\def\version{August 26, 2016}                       %
%%%%%%%%%%%%%%%%%%%%%%%%%%%%%%%%%%%%%%%%%%%%%%%%%%%%%%%%%%%%%%%%%%%%%

 \documentclass[reqno,11pt]{amsart}
 \usepackage{amsmath, amsthm, a4, latexsym, amssymb}
\usepackage[unicode]{hyperref}
\usepackage{srcltx}
\usepackage{xcolor}

\setlength{\topmargin}{0in}
\setlength{\headheight}{0.12in}
\setlength{\headsep}{.40in}
\setlength{\parindent}{1pc}
\setlength{\oddsidemargin}{-0.1in}
\setlength{\evensidemargin}{-0.1in}

% Format :
\marginparwidth 48pt
\marginparsep 10pt

\oddsidemargin-0.5cm
\evensidemargin-.5cm

\headheight 12pt
\headsep 25pt
\footskip 30pt
\textheight  625pt %if \pagestyle{empty} use 630pt
\textwidth 170mm
\columnsep 10pt
\columnseprule 0pt
\setlength{\unitlength}{1mm}

\setlength{\parindent}{20pt}
\setlength{\parskip}{2pt}

\def\@rmrk#1#2{\refstepcounter
    {#1}\@ifnextchar[{\@yrmrk{#1}{#2}}{\@xrmrk{#1}{#2}}}

% Format :
 %\marginparwidth 48pt\marginparsep 10pt
 %\oddsidemargin-0.5mm
 %\topmargin -18pt\headheight 12pt\headsep 25pt\footskip 30pt
 %\textheight 625pt\textwidth 160mm\columnsep 10pt\columnseprule 0pt
%
 
\makeatletter\@addtoreset{equation}{section}\makeatother

 \sloppy
 \parskip 0.8ex plus0.3ex minus0.2ex
 \parindent1em

 \newfont{\bfit}{cmbxti10 scaled 1200}

%%%%%%%%%%%%%%%%%% Abbreviations %%%%%%%%%%%%%%%%%%%%%%%%%%%
\renewcommand{\d}{{\rm d}}

 \newcommand{\e}{{\rm e} }

 \newcommand{\eps}{\varepsilon}

 \newcommand{\R}{\mathbb{R}}
 \newcommand{\N}{\mathbb{N}}
 \newcommand{\Z}{\mathbb{Z}}

 \newcommand{\E}{\mathbb{E}}
 \renewcommand{\P}{\mathbb{P}}
%  Blackboard bold 1
 \def\1{{\mathchoice {1\mskip-4mu\mathrm l} 
{1\mskip-4mu\mathrm l}
{1\mskip-4.5mu\mathrm l} {1\mskip-5mu\mathrm l}}}

 \newcommand{\Mcal}{{\mathcal M}}

\newcommand{\ignore}[1]{}
\newcommand{\heap}[2]{\genfrac{}{}{0pt}{}{#1}{#2}}

\newcommand{\ssup}[1] {{\scriptscriptstyle{({#1}})}}

%%%%%%%%%%%%%%%%%%%%%%%%%%%%%%%%%%%%%%%%%%%%%%%%%%%%%%%%%%%%%%%

\renewcommand{\subsection}{\secdef \subsct\sbsect}
\newcommand{\subsct}[2][default]{\refstepcounter{subsection}
\vspace{0.15cm}
{\flushleft\bf \arabic{section}.\arabic{subsection}~\bf #1  }
\nopagebreak\nopagebreak}
\newcommand{\sbsect}[1]{\vspace{0.1cm}\noindent
{\bf #1}\vspace{0.1cm}}

{\nopagebreak {\hfill\rule{2mm}{2mm}}\\ }

\newtheorem{theorem}{Theorem}[section]
\newtheorem{lemma}[theorem]{Lemma}
\newtheorem{cor}[theorem]{Corollary}

\newtheoremstyle{thm}{1.5ex}{1.5ex}{\itshape\rmfamily}{}
{\bfseries\rmfamily}{}{2ex}{}

\newtheoremstyle{rem}{1.3ex}{1.3ex}{\rmfamily}{}
{\itshape\rmfamily}{}{1.5ex}{}
\theoremstyle{rem}
\newtheorem{remark}{{\slshape\sffamily Remark}}[]

\refstepcounter{subsubsection}

\def\thebibliography#1{\section*{References}
  \list%
  {\arabic{enumi}.}%                          {\star}{\star}{\star} style of reference number {\star}{\star}{\star}
    {\settowidth\labelwidth{[#1]}\leftmargin\labelwidth
    \advance\leftmargin\labelsep
    \parsep0pt\itemsep0pt
    \usecounter{enumi}}
    \def\newblock{\hskip .11em plus .33em minus .07em}
    \sloppy                   % \clubpenalty4000\widowpenalty4000
    \sfcode`\.=1000\relax}

%\newtheorem{remark}[theorem]{Remark}

%%%%%%%%%%%%%%%%%%%%%%%%%%%%%%%%%%%%%%%%%%%%%%%%%%%%%%%%%%%%%%%%

%%%%%%%%%%%%%%%%%%%%%%%%%%%%%%%%%%%%%%%%%%%%%%%%%%%%%%%%%%%%%%%%
 \begin{document}
%%%%%%%%%%%%%%%%%%%%%%%%%%%%%%%%%%%%%%%%%%%%%%%
\title[Large deviations for SRW on percolation clusters]%corrected by K; Readers cannot read page numbers because it is too long as a running head. 
{\large Quenched large deviations for simple random walks on percolation clusters including long-range correlations}
\author[Noam Berger, Chiranjib Mukherjee and Kazuki Okamura]{}
\maketitle
\thispagestyle{empty}
\vspace{-0.5cm}

\centerline{\sc Noam Berger \footnote{Hebrew University Jerusalem and TU Munich, Boltzmannstrasse 3, Garching 85748, {\tt noam.berger@tum.de}},
Chiranjib Mukherjee \footnote{University of M\"unster, Einsteinstrasse 62, M\"unster 48149, Germany, {\tt mukherjee@cims.nyu.edu}} and 
Kazuki Okamura\footnote{Research Institute for Mathematical Sciences, Kyoto University, Kyoto, 606-8502, {\tt kazukio@kurims.kyoto-u.ac.jp} }}
\renewcommand{\thefootnote}{}
\footnote{\textit{AMS Subject
Classification:} 60J65, 60J55, 60F10, {60K37}.}
\footnote{\textit{Keywords:} Large deviations, random walk on percolation clusters, long-range correlations, random interlacements, Gaussian free field, random cluster model}

\vspace{-0.5cm}
\centerline{\textit{TU Munich and Hebrew University Jerusalem, University of M\"unster, Kyoto University}}
\vspace{0.2cm}

%List of major revision by K. (19/Feb/2016) 
%1. Add two percolation models. One of them (the DRS model) is site percolation, so I added notation. 
%2. Corollary 3.3 must be a statement for annealed model, so I fixed it. 
%3.  I think the proof of Lemma 6.1 is identical of the proof of Lemma 4 in Yilmaz's paper, so I deleted by adding ``%".  

\begin{center}
\today
\end{center}

\begin{quote}{\small {\bf Abstract:} 
We prove a {\it{quenched}} large deviation principle (LDP) for a simple random walk on a supercritical percolation cluster (SRWPC) on $\Z^d$ ($d\geq 2$). %including long-range correlations. 
The models under interest include classical Bernoulli bond and site percolation as well as models that exhibit long range correlations, like the random cluster model, the random interlacement and {the vacant set of random interlacements}(for $d\geq 3$) and the level sets of the Gaussian free field ($d\geq 3$). 

Inspired by the methods developed by Kosygina, Rezakhanlou and Varadhan (\cite{KRV06}) for proving quenched LDP for elliptic diffusions with a random drift,
and by Yilmaz (\cite{Y08}) and Rosenbluth (\cite{R06}) for similar results regarding elliptic random walks in random environment,
we take the point of view of the moving particle and prove a large deviation principle for the quenched distribution of the {\it{pair empirical measures}} of the environment Markov chain
in the non-elliptic case of SRWPC . Via a contraction principle, this reduces easily to a quenched LDP for the distribution of the mean velocity of the 
random walk and both rate functions admit explicit variational formulas. 
 
The main difficulty in our set up lies in the inherent non-ellipticity as well as the lack of {\it{translation-invariance}} stemming from conditioning on the fact that the origin belongs to the infinite cluster. 
We develop a unifying approach for proving quenched large deviations for SRWPC based on exploiting coercivity properties of the relative entropies in the context of convex variational analysis, combined with input from ergodic theory and invoking geometric properties 
of the supercritical percolation cluster. 
}

%invoking ergodicity arguments in this non-elliptic set up to control the growth of {\it{gradient functions (correctors)}} which come up naturally via convex variational analysis in the context of homogenization of random Hamilton-Jacobi-Bellman equations along the arguments of Kosygina, Rezakhanlou and Varadhan (\cite{KRV06}) and Rosenbluth (\cite{Ros06}) and Yilmaz (\cite{Y08}) for elliptic random walks in random environment. Although enjoying some similarities, our gradient function is structurally different from {\it{the}} classical {\it{Kipnis-Varadhan corrector}}, a well-studied object in the context of reversible random motions in random media.
% \textcolor{green}{For political reasons, we should also write more about Yilmaz's result.}

 %concerning the homogenization of random 
% invoking ergodicity arguments to control  Both rate functions admit variational formula 
 
% distribution of the empirical measures of a simple random walk
%on a supercritical percolation cluster on $\Z^d$, $d\geq 2$. The LDP is first proved for the "pair empirical measure" of the process for the "environment seen from the particle" and nearest neighbor increments. Via a simple contraction principle we get a quenched LDP also for the distribution of the mean velocity $\frac {X_n}n$.

\end{quote}
\section{Motivation, introduction and main results}
%\subsection{The model.}

%\textcolor{red}{We will write down briefly what we are studying and what motivates us.}
%\medskip\noindent
%A BIT MORE INTRO: 

We consider a simple random walk on the infinite cluster of some bond and site percolation models on $\Z^d$, $d\geq 2$. 
The percolation models under interest include classical Bernoulli bond and site percolation, as well as 
models that exhibit long-range correlations, including {the random-cluster model}, 
random interlacements and the vacant set of random interlacements in $d\geq 3$, and the level set of the Gaussian free field (also for $d\geq 3$).
%(REFERENCES). 
%\textcolor{red}{In the case of Bernoulli bond percolation,} 
Conditional on the event that the origin lies in the infinite open cluster, it is known that a law of large numbers %hold with a zero limiting velocity (in the absence of any given drift). Further questions about 
and quenched central limit theorem hold (see \cite{SS04}, \cite{MP07}, \cite{BB07} and \cite{PRS15}). 
Treatment of these classical questions for these models need care because of its inherent {\it{non-ellipticity}} -- a problem which permeates in several forms in the above mentioned literature.

Questions on large deviation principles (LDP) in the quenched setting for general random walks in {\it{elliptic}} random environments (RWRE) have also been studied. In $d=1$, first Greven and den Hollander (\cite{GdH94}) for  
i.i d. and uniformly elliptic random environments, and then Comets, Gantert and Zeitouni (\cite{CGZ00}) for stationary, ergodic and uniformly elliptic random environments, derived quenched LDP for the mean velocity of a a RWRE
and obtained explicit variational formulas for the rate function. For $d\geq 1$, Zerner (\cite{Z98}, see also Sznitman (\cite{S94})) proved quenched LDP under the assumption that the logarithm of the random walk transition probabilities possesses finite $d$-th moment and the random environment enjoys the {\it{nestling property}}. 
His method is based on proving shape theorems invoking the sub-additive ergodic theorem. 
Using the sub-additivity more directly, Varadhan (\cite{V03}) proved a quenched LDP dropping the {\it{nestling}} assumption and assuming uniform ellipticity for the random environment. 
However, the use of sub-additivity in the above results did not lead to any desired formula for the rate function.

Kosygina, Rezakhanlou and Varadhan (\cite{KRV06}) derived a novel method for proving quenched LDP using the {\it{environment seen from the particle}} in the context
of a diffusion with a random drift assuming some growth conditions on the random drift (ellipticity) and obtained a variational formula for the rate function.
This method goes parallel to quenched homogenization of random Hamilton-Jacobi-Bellman (HJB) equations.
Rosenbluth (\cite{R06}) adapted this theory to the ``level-1" large deviation analysis of the rescaled location of a multidimensional random walk in random environments and also obtained a formula for the rate function.
The assumption regarding the growth condition on the random drift imposed in \cite{KRV06} under which homogenization of HJB takes place, or quenched large deviation principle for the rescaled law of the diffusion
holds, now translates to the assumption that logarithm of the random walk transition probabilities possesses finite $d+\eps$ moment, for some $\eps>0$ (see \cite{R06}). 
Under the same moment assumption, Yilmaz (\cite{Y08}) extended this work to a ``level-2"  LDP for the law of the pair empirical measures of the environment Markov chain and subsequently Rassoul-Agha and Sepp\"al\"ainen (\cite{RS11}) proved a ``level-3" LDP for the empirical process for the environment Markov chain. 
Like Rosenbluth (\cite{R06}), both \cite{Y08} and \cite{RS11} obtained variational formulas for the corresponding
rate functions. This method has been further exploited for studying free energy for directed and non-directed random walks in a unbounded random potential (see the works of Rassoul-Agha, Sepp\"al\"ainen and Yilmaz \cite{RSY13, RSY14} and Georgiou et al. \cite{GRSY13,GRS14}). We also refer to the works of Armstrong and Souganidis (\cite{AS12}, see also \cite{LS05, LS10, AT14}) for the continuous analogue of \cite{RSY13} concerning homogenization of random Hamilton Jacobi Bellman equations in unbounded environments. Roughly speaking, all these  results in the aforementioned literature work only under the assumption that $V:=-\log \pi \in L^p(\P)$ with $p>d$, where
$\pi$ denotes the random walk transition probabilities in the elliptic random environment whose law is denoted by $\P$.
Thus, the aforementioned  literature does not cover the case $V= \infty$ pertinent to the case of a random walk on a supercritical percolation cluster, an important model that carries 
the aforementioned inherent non-ellipticity of the random environment.

%In a fundamental work, Kosygina-Rezakhanlou-Varadhan (\cite{KRV06}) studied quenched LDP for a diffusion with random drift and in related work of Rosenbluth (\cite{R06}) and Yilmaz (\cite{Y08}) for general random walks in random environments. 
%Other results of relevance are by Rassoul-Agha, Sepp\"al\"ainen and Yilmaz (see \cite{RSY13}) on directed, undirected and stretched polymers in a random (and possibly unbounded) potential. 
%All these results, however, require certain {\it{moment conditions}} on the environment ({\it{ellipticity}}) which needs to be necessarily dropped when studying a {\it non-elliptic} model, for example, the classical simple random walk on the supercritical percolation cluster (SRWPC). 

In this context, it is the goal of the present article to develop a unifying approach for proving quenched large deviation principles for the distribution of the empirical measures of the environment Markov chain of SRWPC ({\it{level-2}}) and subsequently deduce the particle dynamics of the rescaled location ({\it{level-1}}) of the walk on the cluster. 
We start with a precise mathematical  layout of  the random environments under consideration including the bond and site percolations on $\Z^d$.% under interest, which carry the aforementioned inherent non-ellipticity of the environment.  

%\textcolor{red}{comments by K; English vocabulary is somewhat difficult for K. (``permeate", ``inherent" )}  

%which will be the underlying
%random environment %corrected by K
 %classical bond percolation model on $\Z^d

%\subsection{The models under interest.} 

\subsection{The percolation models under interest.}\label{sec-intro-models}

{We fix $d\geq 2$ and denote by $\mathbb B_d$ the set of nearest neighbor edges of the lattice $\Z^d$ and by $\mathcal U_d=\{\pm e_i\}_{i=1}^d$ the set of edges from the origin to its nearest neighbor. 
We will now phrase out the basic set up of the bond and site percolation models on $\mathbb{Z}^d$ which we will be of  working with in this article.}

{For every bond percolation model, 
 we will set $\Omega = \{0,1\}^{\mathbb B_d}$ to be the space of all {\it{percolation configurations}} $\omega=(\omega_b)_{b\in\mathbb B_d}$. 
In other words, 
$\omega_b=1$ refers to the edge $b$ being {\it{present}} or {\it{open}}, %corrected by K 
while $\omega_b=0$ 
implies that it is {\it{vacant}} or {\it{closed}}. 
We consider a random subgraph $\mathcal{C}$ of $\mathbb{Z}^d$ whose vertices and edges are $\mathbb{Z}^d$ and the set of open edges, respectively.  
We call each connected component of the random graph a {\it{cluster}}, and if a cluster contains infinitely many vertices then we call it an {\it infinite cluster}.
For $x, y \in \mathbb{Z}^d$, 
we write $x \sim y$ if $x$ and $y$ are connected in $\mathcal{C}$.   
If $x \sim y$, let $\d_{\textrm{ch}}(x,y)$ be the graph distance on the cluster containing $x$ and $y$, specifically, the minimal length of paths connecting $x$ and $y$ in $\mathcal{C}$.}
%Consider bond percolation models. 
Let $\mathcal B$ be the Borel-$\sigma$-algebra on $\Omega$ defined by the product topology. 
We call elements of $\mathcal B$ {\it events} and  say that an event $A$ is {\it increasing} if the following holds: 
Whenever $\omega = (\omega_b)_b \in A$ and $\omega^{\prime} = (\omega^{\prime}_b)_b $ satisfies that $\omega^{\prime}_b \ge \omega_b$ for each $b \in \mathbb B_d$, 
$\omega^{\prime} \in A$.   
Note that $\Z^d$ acts as a group on $(\Omega, \mathcal B)$ %$(\Omega, \mathcal B, \P)$ (\P does not yet defined here)
via translations. 
In other words, %for bond percolation models, 
for each $x\in \Z^d$, $\tau_x: \Omega \longrightarrow \Omega$ acts as a {\it{shift}} given by $(\tau_x\omega)_b= \omega_{x+b}, b \in \mathbb B_d$.
Let $\P$ be a probability measure on $\Omega$. 
%We assume the following conditions : \\ 
%{\bf{(A1)}} (Ergodicity):  $\P$ is invariant and ergodic with respect to $\tau_{x}$ for any $x \in \Z^{d} \setminus \{0\}$. \\ 
%{\bf{(A2)}} (Unique infinite cluster):  $\P$-almost surely $\omega$, 
%there exists a unique infinite open cluster $\mathcal C = \mathcal C_\infty(\omega)$.  \\
%(iii) (good chemical distance estimate) There exist constants $c_{1}, c_{2}, c_{3} > 0$
%such that for any $x \in \Z^{d}$, 
%\[ \P(0 \leftrightarrow x, D(0, x) \ge c_{1}|x|_{1}) \le c_{1} \exp( - c_{2}(\log |x|_{1})^{1+c_{3}}). \]
Let $\Omega_0 = \{\omega\colon 0\in \mathcal C_\infty(\omega)\}$, and 
%K
%For $p>p_c$ 
%K
if $\P(\Omega_0)  > 0$, 
we then define the conditional probability $\P_0$ by
$$
\P_0(A) = \P\big(A\big|\Omega_0\big) \qquad A\in\mathcal B.
$$

{If we consider a site percolation model, 
then we let $\Omega = \{0,1\}^{\mathbb Z^d}$.
We agree to call a site $x\in \Z^d$ {\it{present}} or {\it{open}} if $\omega_x =1$, %refers to a site $x \in \mathbb{Z}^d$ being {\it{present}} or {\it{open}}, %corrected by K 
and {\it{vacant}} or {\it{closed}} if $\omega_x =0$ .
%implies that it is {\it{vacant}} or {\it{closed}}. 
The notation we set up for the bond percolation model in the above paragraph now carry over 
to the site percolation set up  pertaining to a random subgraph $\mathcal{C}$ of $\mathbb{Z}^d$ whose vertices and edges are the set of open sites and the set of edges whose two endpoints are open sites, respectively.}
%For $x, y \in \mathbb{Z}^d$, 
%we write $x \sim y$ if $x$ and $y$ are connected in a path in $\mathcal{C}$.   
%We can define notation in the above paragraph for site percolation models in the same manner.}

{We will now postulate a set of conditions imposed on the bond and site percolation models 
and subsequently describe the explicit models under interest that satisfy these conditions.
The general requirements are the following: \\ %\label{ass-perco} 
{\bf{Assumption 1.}} For $\P$-a.e. $\omega$, there exists a unique infinite cluster $\mathcal C_\infty(\omega)$ in $\Z^d$. Note that under this assumption, $\P(\Omega_0)  > 0$ and consequently, $\P_0$ is well-defined. \\ 
%NB28JUL Assumption 2 did not make sense. I change it to make sense.
{\bf{Assumption 2.}} For each $x \in \mathbb{Z}^d \setminus \{0\}$,  $\P$ is invariant and ergodic with respect to the transformation $\tau_x$. \\
%NB28JUL For Assumption 3 to make sense, we need to assume that both endpoints are in the infinite cluster. I changed accordingly.
{\bf{Assumption 3.}}  We assume that there exist  $c_1, \dots, c_4 > 0$ such that for each $x \in \mathbb{Z}^d$, 
\[ \P[\d_{\textrm{ch}}(0,x) \ge c_1 |x|_1 ; 0,x \in {\mathcal C}_\infty(\omega) ] \leq c_2 \exp(-c_3 (\log |x|_1)^{1+c_4}). \] 
%where $d_{\textrm{ch}}(y, z)$ is the graph distance on a cluster containing $y$ and $z$.\\
\vspace{2mm}

We will need to impose further assumptions. Let us first define, for any fixed $\omega\in\Omega_0$ and $e\in\mathcal U_d$, 
\begin{equation}\label{def-n}
k(\omega,e)=\inf\{k\geq 1: \tau_{ke}\,\,\omega\in\Omega_0\}.
\end{equation}
%We would like to argue that  This is immediate from 
Note that under Assumption 2, by the Poincar\'e recurrence theorem (cf. \cite[Section 2.3]{P89}), 
$k(\omega,e)$ is finite $\P_0$-a.s.

{\bf{Assumption 4.}} With the above definition of $k(\omega,e)$, we
% For each $e \in \mathcal U_d$, let 
%\[ k(\omega, e) := \inf\{k \ge 1: ke \in \mathcal C_{\infty} \}, \ \omega \in \Omega_0.  \]  
%and \[ \ell := d_{\mathrm{ch}}(0, k(\omega, e)e).\]  % in $\mathcal C_{\infty}$.   
then, assume that there exist $c_5, c_6>0$ so that
\[ \P_0 \big[d_{\mathrm{ch}}(0, k(\omega, e)e) > n \big] \le c_5 \exp(-c_6 n). \]
{\bf{Assumption 5.}} The FKG inequality holds, specifically, $\P(A \cap B) \ge \P(A)\P(B)$ 
holds for every two increasing events $A$ and $B$ in $\Omega$.
}

We now turn to a precise description of the specific models that we will be concerned with, and all the following models will satisfy our requirements listed above (see Lemma \ref{chemdist} - Lemma \ref{lemma-FKG}).

%\textcolor{blue}{REMARK: If we put these things here, then, we need to revise Section 4 largely. Section 4 can be shortened, but a new section for proving Lemmas 4.2 and 4.3 will be needed.}  %The new section should be in appendix or the final section, rather than in the introduction.}
\subsubsection{The bond percolation models.} \label{sec-intro-bond}
We first describe two classical models related to bond percolation.

%K

%Note that $\Z^d$ acts as a group on $(\Omega, \mathcal B, \P)$ via translations. 
%In other words, for each $x\in \Z^d$, $\tau_x: \Omega \longrightarrow \Omega$ acts as a {\it{shift}} given by $(\tau_x\omega)_b= \omega_{x+b}$.
%K  
%We give two examples of  percolations which satisfy these assumptions. 
%They contain supercritical Bernoulli bond or site percolation as a special case respectively. 
%\begin{itemize}

\noindent $\bullet$ {\bf{I.I.D. bond percolation.}}
We fix the {\it{percolation parameter}} $p\in(0,1)$ and denote by 
$$
\P=\P_p:=\big(p \delta_1+ (1-p) \delta_0\big)^{\mathbb B_d}
$$ 
the product measure with marginals $\P(\omega_b=1)=p=1- \P(\omega_b=0)$.
Note that the product measure $\P$ is invariant under the action of the translation group $\{\tau_x\}_x$. 
It is known that there is a critical percolation  probability $p_c=p_c(d)$ which is the infimum of all $p$'s such that $\P(0\in \mathcal C_\infty)>0$. 
In this paper we only consider the case $p>p_c$. 
By Burton-Keane's uniqueness theorem (\cite{BK89}), the infinite cluster is unique and so $\mathcal C_\infty$ is connected with $\P$-probability one. %Hence, the assumptions {\bf{(A1)}} and {\bf{(A2)}} are satisfied.

%K

%%K  
%The first one is supercritical Bernoulli percolation.  
%We fix the {\it{percolation parameter}} $p\in(0,1)$ and denote by $\P=\P_p:=\big(p \delta_1+ (1-p) \delta_0\big)^{\mathbb B_d}$ the product measure with marginals $\P(\omega_b=1)=p=1- \P(\omega_b=0)$. 
%Note that the product measure $\P$ is invariant under this action.
%It is known that there is a critical percolation  probability $p_c=p_c(d)$ which is the infimum of all $p$'s such that $\P(0\in \mathcal C_\infty)>0$. In this paper we only consider the case $p>p_c$. 
%For $p>p_c$, the set $\mathcal C_\infty(\omega)$ is $\P$-almost surely non-empty and connected.} % by the existence and uniqueness of an infinite cluster.

%K  
\medskip

\noindent $\bullet$ {\bf{Random cluster model.}} The second example is the random-cluster model, which is a natural extension of Bernoulli bond percolation. However, 
this models exhibits long range correlations and one necessarily drops the i.i.d. structure present in the first example. Let us shortly recall
the basic structure and the salient properties of this model.
%This is a well-known percolation model with long-range correlations and is a . 
%We follow Grimmett's book \cite{G06} for basic definitions and properties of this model.  

Let $d \ge 2$, $p \in [0,1]$, $q \ge 1$, and let also
$\Lambda$ be a box in $\mathbb{Z}^{d}$ with boundary condition $\xi \in \{0,1\}^{\mathbb B_d}$. 
Let $\mathbb{P}_{\Lambda, p, q}^{\xi}$ be the random-cluster measure
on $\Lambda$, defined as
$$ 
\mathbb{P}_{\Lambda, p, q}^{\xi}(\{\omega\}) = \frac{1}{Z} \,\, p^{n(\omega)}\,\, (1-p)^{|\Lambda| - n(\omega)}\,\, q^{o(\omega)}. 
$$
Here $Z$ is a normalizing constant that makes $\mathbb{P}_{\Lambda, p, q}^{\xi}$ a probability measure, while $n(\omega)$ is the number of edges in $\Lambda \cap \omega$, $|\Lambda|$ is the number of all edges in $\Lambda$. 
$o(\omega)$ is the number of open clusters of $\omega_{\Lambda, \xi}$ intersecting $\Lambda$, 
where 
$$
\omega_{\Lambda, \xi} =\begin{cases}
\omega \quad\mbox{on}\,\,\Lambda
\\
\xi \quad\mbox{outside}\,\, \Lambda.
\end{cases}
$$
Let 
$$
%\begin{equation}\label{def-P-pqb}
\mathbb{P}_{p,q}^{\ssup{ b}} = \,\,\, \lim_{\Lambda \to \mathbb{Z}^d} \mathbb{P}_{\Lambda, p, q}^{{\ssup{b}}}%\qquad \mbox{ where }\,\,\mathbf b = (b,b, \dots) \in \{0,1\}^{\mathbb B_d}, \,\,b\in\{0,1\}.
%\end{equation}
$$
In other words, $\mathbb{P}_{p,q}^{\ssup{b}}$ is the extremal infinite-volume limit random-cluster measures, with free (for $b=0$) and wired (for $b=1$) conditions respectively. 
For each $b\in \{0,1\}$, let
$$
p_{c}^{\ssup b}(q) = \inf \bigg\{p \in [0,1] : \mathbb{P}_{p,q}^{\ssup b}(0 \leftrightarrow \infty) > 0\bigg\}, \quad b  = 0, 1.
$$
Then, 
$p_{c}^{\ssup 0}(q) = p_{c}^{\ssup 1}(q) \in (0,1)$ (\cite[(5.4)]{G06}) and we write this as $p_{c}(q)$. 
It is well-known that, for both $b=0$ and $b=1$, 
the measure $\P:=\mathbb{P}_{p, q}^{\ssup{ b}}$
is invariant and ergodic with respect to $\tau_{x}$ for every $x \in \Z^{d} \setminus \{0\}$ and for all $p \in [0,1]$ and $q\geq 1$ (\cite[(4.19) and (4.23)]{G06}).
Furthermore, for every $p > p_{c}(q)$, there exists a unique infinite cluster $\mathcal{C}_{\infty}$, $\mathbb{P}_{p,q}^{b}$-a.s. by \cite[Theorem 5.99]{G06},
%$\P$-almost surely $\omega$, there exists a unique infinite open cluster $\mathcal C = \mathcal C_\infty(\omega)$.  
%and the assumption {\bf{(A2)}} is also satisfied in this case. 

%Although in $d=2$ we can work with every $p>p_c(q)$ for every $q\geq 1$, for technical reasons, we will assume that 
%in $d\geq 3$, $q=2$. We will henceforth work in this regime, and will write $\P= \mathbb{P}_{p, q}^{\ssup{\mathbf b}}$ and $\P_0=\P(\cdot|\,0\in \mathcal C_\infty)$. %Hence, we will assume that $p> p_c(q)$.
For our purpose, we also need the notion of {\it{slab critical probability}}, which is defined as follows.
For $d \ge 3$, 
we let 
\begin{equation}\label{slab-d3}
\begin{aligned}
&S(L, n) := [0, L-1] \times [-n, n]^{d-1} \\
& \widehat{p}_{c}(q, L) := \inf \left\{ p : \liminf_{n \to \infty} \inf_{x \in S(L,n)} \mathbb{P}_{S(L,n), p,q}^{\ssup{ 0}}(0 \leftrightarrow x) > 0 \right\} \\
&\widehat{p}_{c}(q) := \lim_{L \to \infty} \widehat{p}_{c}(q, L).
\end{aligned}
\end{equation}
For $d = 2$, for $e_{n} = (n, 0) \in \mathbb{R}^{2}$, we let
\begin{equation}\label{slab-d2}
\begin{aligned}
& {p_{g}(q)} := \sup \left\{p : \lim_{n \to \infty} \frac{- \log \mathbb{P}_{p,q}^{\ssup{ 0}}(0 \leftrightarrow e_{n})}{n} > 0 \right\}, \\
&\widehat{p}_{c}(q) := \frac{q(1-p_{g}(q))}{p_{g}(q) + q(1 - p_{g}(q))}
\end{aligned}
\end{equation}
 and we have the bound ${1 > } \ \widehat{p}_{c}(q) \ge p_{c}(q)$. 
Although equality is believed to be true in the last relation (\cite[Conjecture 5.103]{G06}), 
to the best of our knowledge, 
the only known proofs are available only for the case $q = 1$ (i.e., the case of Bernoulli bond percolation (see Grimmett and Marstrand \cite{GM90}),
and for $d = 2$ and every $q\geq 1$ (see Beffara and Duminil-Copin \cite{BD12}), 
and for $d \ge 3$ and $q=2$ (i.e., {\it{FK-Ising model}}, see Bodineau \cite{B05}). 
%Although in $d=2$ we can work with every $p>p_c(q)$ for every $q\geq 1$, for technical reasons, we will assume that in $d\geq 3$, $q=2$. 
We will henceforth work in the regime that 
%We will assume that 
$$ 
p > \widehat{p}_{c}(q),
$$ 
and will write $\P= \mathbb{P}_{p, q}^{\ssup{ b}}$ and $\P_0=\P(\cdot|\,0\in \mathcal C_\infty)$ throughout the rest of the article.
If $d\geq 3$, in order to show that Assumption 4 is satisfied by the random cluster model, for technical reasons we 
will consider only the free boundary case.

We point out that in the process of proving our main results (stated in Section \ref{sec-results}) corresponding to the random cluster model, 
we prove some geometric properties of this model as a necessary by-product. 
In particular, we prove a ``chemical distance estimate" between two points in the infinite cluster $\mathcal C_\infty$
(see {Lemma \ref{chemdist}}), and also obtain exponential tail bounds for the graph distance between the origin and the ``first arrival" of the infinite cluster $\mathcal C_\infty$ on each coordinate direction (see {Lemma \ref{lemma-ell}}). 
Although both results are part of the standard folklore in the i.i.d. percolation literature, the proofs of these two assertions for the random cluster model seem to be new, to the best of our knowledge.

%\end{itemize}

%\textcolor{green}{Here we should mention the Antal-Pisztora estimate for the random cluster model, provided that it was not done before.}

\subsubsection{Site percolation models.}\label{sec-intro-site} \ 
The second class of models we are interested in concerns {\it{site percolations}}, which include the classical Bernoulli i.i.d. percolation 
%(the model introduced in Section \ref{sec-iidbond} with $\Omega=[0,1]^{\mathbb B_d}$ replaced by $\Omega=[0,1]^{\Z^d}$), 
as well as models that carry long-range correlation. 
%These models have been introduced in (REFERENCES) and have been studied extensively (REFERENCES). 
We turn to %a a %typo; corrected by K 
short descriptions of these models. 

%\begin{itemize}
\noindent $\bullet$ {\bf{Random interlacements in $d\geq 3$.}}  
This model was introduced by Sznitman \cite{Sz10}. 
Let $\mathbb T_N= \big(\Z/N\Z\big)^d$ be the discrete torus in $d\geq 3$. 
For every $u>0$, the {\it{random interlacement}} $\mathcal I^{\ssup u}$ is defined to be a subset of $\Z^d$ 
which arises as the local limit, as $N\to\infty$ of the sites visited by a simple random walk in $\mathbb T_N$
until time $\lfloor uN^d \rfloor$. 
For every finite subset $K\subset\Z^d$ with capacity $\mathrm{cap}(K)$, the distribution of $\mathcal I^{\ssup u}$ is given by
$$
\P\big[\mathcal I^{\ssup u} \cap K= \emptyset\big]= \e^{-u \, \mathrm{cap}(K)},
$$
Furthermore, {for every $u > 0$,} 
$\P$-almost surely, the set $\mathcal I^{\ssup u}$ is an infinite connected subset of $\Z^d$ (see \cite[(2.21)]{Sz10}), exhibits 
long range correlations given by 
\begin{equation}\label{RI-corr}
\bigg|\P\big[x,y\in \mathcal I^{\ssup u}\big]- \P\big[x\in \mathcal I^{\ssup u}\big] \, \P\big[y\in \mathcal I^{\ssup u}\big]\bigg| \sim \big(1+|x-y|\big)^{2-d}.
\end{equation}
{See \cite[(1.68)]{Sz10} for details.} 
\medskip 

\noindent {$\bullet$ {\bf{Vacant set of the random interlacements in $d\geq 3$.}}}  
The {\it{vacant set of random interlacements}} $\mathcal V^{\ssup u}$ is defined to be the complement of the random interlacement $\mathcal I^{\ssup u}$ at level $u$, i.e., 
$$
\mathcal V^{\ssup u} = \Z^d \setminus \mathcal I^{\ssup u} \qquad \P\big[K\subset \mathcal V^{\ssup u} \big]= \e^{-u \mathrm{cap}(K)}.
$$
Furthermore, $\mathcal V^{\ssup u}$ also exhibits polynomially decaying correlation as in \eqref{RI-corr}. It is known that 
there exists $u_\star\in (0,\infty)$ such that almost surely, for every $u > u_\star$, all connected components of $\mathcal V^{\ssup u}$ are finite (\cite{TW11}),
while for $u < u_\star$,  $\mathcal V^{\ssup u}$ contains an infinite connected component $\mathcal C_\infty$,
which is unique (\cite{T09aap}).

Geometric properties of the random interlacements and {the vacant sets of them} have been studied extensively. 
{Cerny-Popov (\cite{CP12}) obtained sharp estimates on the graph distance in random interlacement}, and,  
Drewitz-Rath-Sapozhnikov (\cite{DRS14}) obtained sharp estimates on the graph distance { in the vacant set of random interlacement}, assuming that $u\in (0,\overline u)$ for some
$\overline u \leq u_\star$, where $\overline u$ is introduced in \cite[Theorem 2.5]{DRS14}.
Although it is believed that $\overline u= u_\star$, we will henceforth assume that 
$$%\begin{equation}\label{assume-RI}
u<\overline u.
$$%\end{equation} 
and in this regime, as before, we will write $\P_0=\P(\cdot|\, 0\in \mathcal C_\infty)$.

\medskip

\noindent $\bullet$ {\bf{{Level sets of } Gaussian free fields in $d\geq 3$.}} 
This model has a strong background in statistical physics (see \cite{LS86} and \cite{Sh07} for a mathematical survey).    
The Gaussian free field on $\Z^d$ for $d \geq 3$, is a centered Gaussian field $\varphi = \big(\varphi(x)\big)_{x\in Z^d}$ 
under the probability measure $\P$ with covariance function
$$
\E[\varphi(x)\varphi(y)] = g(x,y)=c_d |x-y|^{2-d},
$$
given by the Green function of the simple random walk on 
$\Z^d$. This leads to long range correlations exhibited by random field $\varphi$.
For every $h\in \R$, the {\it{excursion set above level $h$}} is defined as
$$
E^{\geq h} = \{x \in \Z^d \colon \varphi(x)\geq h\}
$$
and it is known that there exists $h_\star\in [0,\infty)$ such that for every 
$h<h_\star$, $\P$-almost surely, $ E^{\geq h}$ contains a unique infinite connected component
and for every $h > h_\star$, all the connected components of $ E^{\geq h}$
are finite. Like in the case of random interlacements and vacant set of random interlacements,
results on the graph distance for the excursion level set $ E^{\geq h}$ were also obtained 
in \cite{DRS14} on the sub-regime $(-\infty, \overline h)$ for $\overline h\leq h_\star$. 
\cite[Remark 2.9]{DRS14} conjectures that
%in high enough dimension 
$\overline h= h_\star \in (0,\infty)$ %(see ) %, validity of this equality 
in all $d\geq 3$ %remains an open problem, 
and as before, we will also assume that 
$$%\begin{equation}\label{assume-GFF}
h\in (-\infty, \overline h),
$$%\end{equation} 
which guarantees that the level set $E^{\geq h}$ has a unique infinite connected component $\mathcal C_\infty$ and as usual, we will write $\Omega_0=\{0\in\mathcal C_\infty\}$ and will work with the conditional measure 
$$
\P_0=\P(\cdot| 0\in\mathcal C_\infty).
$$
%This concludes the introduction of all the percolation models we will henceforth work with. 
%\begin{remark}\label{rmk-DRS-assump}
%Let us remark that \cite{DRS14} also a introduced a set of general assumptions {\bf (P1)-(P3)} and {\bf (S1)-(S2)} for percolation models 
%under which all our requirements {\bf Assumption 1-5} are satisfied. Specifically, under the conditions {\bf (P1)-(P3)} and {\bf (S1)-(S2)} listed in \cite{DRS14},
%all the site percolation models (i.e., the random interlacements, its vacant set and the level sets of the Gaussian free field  introduced in Section \ref{sec-intro-site})
%are known to fulfill {\bf Assumption 1-5} (see \cite{S10}, \cite{TW11}, \cite{DRS14}, \cite{PRS15}). We refrain from spelling out the detailed conditions (\cite{DRS14}, {\bf (P1)-(P3)} and {\bf (S1)-(S2)})
%as these are technical and not directly related to our main result. 
%\end{remark}

\subsection{The simple random walk on the percolation models.}\label{sec-SRWPC}

%K
%\textcolor{blue}{
We now define a (discrete time) simple random walk on the unique supercritical percolation cluster $\mathcal C_\infty$ corresponding to the percolation models 
discussed in the last section. %We remark that, for notational convenience, throughout the rest of the article we will consistently work with  notation pertaining to bond percolation models
%discussed in Section \ref{sec-intro-bond}. For the site percolation models appearing in Section \ref{sec-intro-site}, similar notation can be used modulo slight changes. 
%Here is the definition of the simple random walk on the infinite custer $\mathcal C_\infty$.%}%K
%We denote by $U$ the set of unit vectors $e\in \Z^d$, i.e., $|e|=1$. 

Let a random walk start at the origin and at each unit of time,
the walk moves to a nearest neighbor site chosen uniformly at random from the accessible neighbors. 
More precisely, for each $\omega\in \Omega_0$, $x\in \Z^d$ and $e\in \mathcal U_d$, we set
\begin{equation}\label{pidef}
\pi_\omega(x,e)=\frac{ \1_{\{\omega_e=1\}} \circ \tau_x}{\sum_{{e^\prime \in \mathcal U_d}} \1_{\{\omega_{e^\prime}=1\}} \circ \tau_x} \in [0,1],
\end{equation}
and define a simple random walk $X=(X_n)_{n\geq 0}$ as a Markov chain taking values in $\Z^d$ with the transition probabilities
\begin{equation}\label{pitransit}
\begin{aligned}
&P^{\pi,\omega}_{0}(X_0=0)=1,\\
&P_{0}^{\pi,\omega}\big(X_{n+1}= x+e\big| X_n=x\big)=\pi_\omega(x,e).
\end{aligned}
\end{equation}
This is a canonical way to ``put" the Markov chain on the infinite cluster $\mathcal C_\infty$. Henceforth, we will refer to this Markov chain as the {\it{simple random walk on the percolation cluster}} (SRWPC). 

Let us remark that in the expression of $\pi_\omega(x,e)$ as well as $P_{0}^{\pi,\omega}$ we have apparently used the notation for bond percolation models appearing in Section \ref{sec-intro-bond}.
Very similar expression can be used for these objects pertaining to the site percolation models introduced in Section \ref{sec-intro-site} too. To alleviate notation, throughout the rest of the article we will continue to write the expressions \eqref{pidef} and \eqref{pitransit} for the transition kernels $\pi_\omega(x,e)$ and transition probabilities $P_{0}^{\pi,\omega}$ for the SRWPC corresponding to all the percolation models.% throughout the rest of the article. 

%For for the lazy walk, we shave the transition probabilities
%\begin{equation}\label{lazychaintransition}
%\begin{aligned}
%&P_{0,\omega}(X^\prime_0=0)=1,\\
%& P_{0,\omega}\big(X^\prime_{n+1}= x+e\big| X^\prime_n=x\big)= \frac 1{2d} \1_{\{\omega_e=1\}} \circ \tau_x \qquad |e|=1,
%\\
%&P_{0,\omega}\big(X^\prime_{n+1}= x\big| X^\prime_n=x\big)= \frac 1{2d}\sum_{e\colon |e|=1} \1_{\{\omega_e=0\}} \circ \tau_x.
%\end{aligned}
%\end{equation}
%We will primarily be working with the agile walk $X=(X_n)_{n\geq 0}$. 

%We also introduce another Markov chain taking values on the environment space $\Omega_0$, which can be interpreted as the ''environment process seen from the moving particle" and is given by 
%$$
%\bar\omega_n=\tau_{\small{X_n}}\omega \qquad n\geq0,\,\omega\in \Omega_0.
%$$
%Under $P^{\pi,\omega}_{0}$, this is Markov chain starting at $\omega$ and transition kernel 
%$$
%\mathfrak R f= \sum_{|e|=1} \pi_\omega(0,e) f\circ \tau_e,
%$$
%for every bounded, measurable function $f$ on $\Omega$. 
%(\textcolor{red}{Note that $\eta$ may fail to be in $\Omega_0$, need the notion of induced shift already?})
%This process plays a crucial role here and we will recall some detailed properties later in section.
\section{Main results}\label{sec-results}
In Section \ref{sec-results-1} we will introduce the environment Markov chain, its empirical measures and certain relative entropy functionals which will be used later.
In Section \ref{sec-results-2}, we will announce our main results. In Section \ref{sec-results-3} we will carry out a sketch of the existing proof technique related to elliptic RWRE (\cite{Y08},\cite{R06}),
comment on the approach taken in the present paper regarding SRWPC and underline the differences to the earlier approach.

\subsection{The environment Markov chain.}\label{sec-results-1}
For each $\omega\in \Omega_0$, we consider the process $(\tau_{X_n}\omega)_{n\geq 0}$ which is a Markov chain taking values in the space of environments
$\Omega_0$. This is the {\it{environment seen from the particle}} and it plays an important r\^ole in the present context, see section 3.1 for a detailed description.
We denote by
\begin{equation}\label{localtime}
\mathfrak L_n= \frac 1n \sum_{k=0}^{n-1} \delta_{\tau_{X_k}\omega,{X_{k+1}-X_{k}}}
%&=\frac 1n \sum_{n=0}^{n-1} \1_{\big\{\bar\omega_n,{X_n-X_{n-1}}\big\}}\end{aligned}
\end{equation}
the empirical measure of the environment Markov chain and the nearest neighbor steps of the SRWPC $(X_n)_{n\geq 0}$. This is a random element of $\Mcal_1(\Omega_0 \times  \mathcal U_d)$, the space of probability measures on $\Omega_0\times \mathcal U_d$. Note that $\Omega_0\times\mathcal U_d$ inherits the induced product topology from $\Omega\times \mathcal U_d= \{0,1\}^{\mathbb B_d}\times\mathcal U_d$,
while $\Mcal_1(\Omega_0 \times  \mathcal U_d)$ is equipped with the usual weak topology, with convergence being determined by convergence of integrals against continuous and bounded functions $f$ on $\Omega_0\times\mathcal U_d$. Note that under the weak topology $\Mcal_1(\Omega_0 \times  \mathcal U_d)$ is compact ($\Omega_0 \subset \Omega$ is closed and hence also compact). 
The empirical measures $\mathfrak L_n$ were introduced and their large deviation behavior (in the {\it{quenched setting}}) for elliptic random walks in random environments were studied by Yilmaz (\cite{Y08}). %Our result concerns a large deviation principle for the distribution $P^{\pi,\omega}_0 \mathfrak L_n^{-1}$ of the empirical measures $\mathfrak L_n$. 

%
%\textcolor{blue}{
%comment by K : Strictly speaking, since we cannot say that $\tau_e \omega \in \Omega_0$, the space $\Mcal_1(\Omega_0 \times \mathcal U_d)$ is embedded into $\Mcal_1(\Omega_0 \times \Omega)$, not $\Mcal_1(\Omega_0 \times \Omega_0)$. It might be better to comment this briefly.
%} 
 
We note that, via the mapping $(\omega,e)\mapsto (\omega, \tau_e\omega)$ the space $\Mcal_1(\Omega_0 \times  \mathcal U_d)$ is embedded into $\Mcal_1(\Omega_0\times \Omega)$, %corrected by K 
%$L_{n,X}$ can be thought of also as an element of $\Mcal_1(\Omega_0\times \Omega_0)$ and as the "pair empirical measure" of the environment Markov chain. This is the object we would like to study first.
and hence, every element $\mu\in \Mcal_1(\Omega_0\times \mathcal U_d)$ can be thought of as the {\it{pair empirical measure}} of the environment Markov chain. In this terminology,
we can define its marginal distributions by %(seen as an element $\Mcal_1(\Omega_0\times \Omega_0)$ as
\begin{equation}\label{marginals}
\begin{aligned}
&\d (\mu)_1(\omega)= \sum_{e\in \mathcal U_d} \d \mu(\omega,e), \\
& \d (\mu)_{2}(\omega)= \sum_{\omega^\prime\colon \, \tau_e\omega^\prime=\omega} \d \mu(\omega^\prime,e)=\sum_{e\in \mathcal U_d} \d \mu(\tau_{-e}\omega,e).
\end{aligned}
\end{equation}
Here $(\mu)_1$ is a measure on $\Omega_0$ and $(\mu)_2$ is a measure on $\Omega$. %corrected by K
A relevant subspace of $\Mcal_1(\Omega_0 \times  \mathcal U_d)$ is given by  %via (NEED TO REPHRASE THE FOLLWING)
%and belongs to $\Mcal_{1,s}(\Omega_0 \times  \mathcal U_d)$ if $\mu^{\ssup 1}=\mu^{\ssup 2}$. With this notation,  
\begin{equation}\label{relevantmeasures}
\begin{aligned}
\Mcal_1^\star=\Mcal_{1}^ {\star}(\Omega_0\times \mathcal U_d)&=\bigg\{\mu\in\Mcal_1(\Omega_0 \times  \mathcal U_d)\colon\, (\mu)_1=(\mu)_ 2\ll \P_0 \, \,\mbox{and}\, \,\P_0\mbox{- almost surely,}\\
&\qquad\qquad\frac{\d \mu(\omega,e)}{\d (\mu)_{ 1}(\omega)} >0 \,\,
\mbox{if and only if}\,\,\omega_e=1\,\mbox{for}\, e\in \mathcal U_d\bigg\}.%, \,\, \mbox{and}\,\\,\omega\bigg\}.
\end{aligned}
\end{equation}
%K
We remark that, here $(\mu)_1=(\mu)_ 2$ means that $(\mu)_ 2$ is supported on $\Omega_0$ and $(\mu)_1=(\mu)_ 2$. %added by K
Furthermore, Lemma \ref{onetoone}  shows that elements in $\Mcal_1^\star$
are in one-to-one correspondence to Markov kernels (w.r.t. the environment process) on $\Omega_0$ which admit invariant probability measures which are absolutely continuous with respect to $\P_0$. %see Lemma \ref{onetoone}.

Finally, we define a {\it{relative entropy functional}} $\mathfrak I: \Mcal_1(\Omega_0 \times  \mathcal U_d) \rightarrow [0,\infty]$ via
\begin{equation}\label{Idef}
\mathfrak I(\mu) =
\begin{cases}
\int_{\Omega_0} %\d \P_0 %corrected by K. This must be dropped.  
\sum_{e\in \mathcal U_d} \d\mu(\omega,e) \log \frac{\d \mu(\omega,e)}{\d(\mu)_{1}(\omega) \pi_\omega(0,e)} \quad\mbox{if}\,\mu\in\Mcal_1^\star,
\\
\infty\qquad\qquad\qquad\qquad\qquad\qquad\qquad\qquad\quad\mbox{else.}
\end{cases}
\end{equation}
%We will describe some important properties of this functional in section 6. 
For every continuous, bounded and real valued function $f$ on $\Omega_0 \times  \mathcal U_d$, we denote by
$$
\mathfrak I^\star(f)= \sup_{\mu\in\Mcal_1(\Omega_0 \times  \mathcal U_d)} \big\{ \langle f,\mu\rangle - \mathfrak I(\mu)\big\}
$$
the {\it{Fenchel-Legendre transform}} of $\mathfrak I(\cdot)$. Likewise, for every $\mu\in \Mcal_1(\Omega_0 \times  \mathcal U_d)$, $\mathfrak I^{\star\star}(\mu)$ denotes the {\it{Fenchel-Legendre transform}} of $\mathfrak I^\star(\cdot)$. 
%\begin{remark} We should think, if $\pi_\omega(0,e)=0$, then $\mu(\omega,e)=0$ too, recall the definition of $\widetilde\Pi$ from \eqref{pitilde}. We need to make this statement precise,
%may be with a prefactor $\1_{\{\omega(e)=1\}}$ in the sum over $e\in \mathcal U_d$ in the definition of $\mathfrak I$ above.
%\end{remark}
%For every continuous and bounded real valued functions $f$ on $\Mcal_1(\Omega_0 \times  \mathcal U_d)$, we denote by
%$$
%\mathfrak I^\star(f)= \sup_{\mu\in\Mcal_1(\Omega_0 \times  \mathcal U_d)} \big\{ \langle f,\mu\rangle - \mathfrak I(\mu)\big\}
%$$
%the Fenchel-Legendre transform of $\mathfrak I(\cdot)$ and $. We note that $\mathfrak I$ is convex, however may fail to be lower semicontinuous (for an example, see Appendix in \cite{Y08}). Hence $\mathfrak I^{\star\star}$ is not a-priori equal
%to $\mathfrak I$. We are ready to state the main result of this paper.

\subsection{Main results: Quenched large deviation principle.}\label{sec-results-2} 

We are now ready to state the main result of this paper, which proves a large deviation principle for the distributions $P^{\pi,\omega}_0 \mathfrak L_n^{-1}$ on $\Mcal_1(\Omega_0\times\mathcal U_d)$ (usually called {\it{level-2}} large deviations) and the distributions $P^{\pi,\omega}_0 {\frac {X_n}{n}}^{-1}$ on $\R^d$ (usually called {\it{level-1}} large deviations). Both statements hold true for $\P_0$- almost every $\omega\in \Omega_0$ and in the case of elliptic RWRE, these already exist in the literature (see Yilmaz \cite{Y08} for level-2 large deviations and Rosenbluth \cite{R06} for level-1 large deviations) with the assumption which requires the $p$-th moment of the logarithm of the RWRE transition probabilities to be finite, for $p>d$.
In the present context, due to zero transition probabilities of the SRWPC, we necessarily have to drop this moment assumption.

Before we announce our main result precisely, let us remind the reader that all the percolation models that were required to satisfy Assumptions 1-5 in Section \ref{sec-intro-models} or were specifically introduced in Section \ref{sec-intro-bond} and Section \ref{sec-intro-site}, are assumed to be supercritical, the origin is always contained in the unique infinite cluster $\mathcal C_\infty$, $\P_0=\P(\cdot| \{0\in\mathcal C_\infty\})$ denotes the conditional environment measure and $P^{\pi,\omega}_0$
stands for the transition probabilities for SRWPC defined in \eqref{pitransit}. %\textcolor{green}{Isn't it better to write some general conditions and maintain that the models mentioned above all satisfy those conditions? The conditions of Drewitz, Rath and Sapozhnikov seem appropriate}.
Here is the statement of our first main result.
\begin{theorem}[Quenched LDP for the pair empirical measures]\label{thmlevel2}
Let $d\geq 2$. %and $p>p_c(d)$. %corrected by K
Then for $\P_0$- almost every $\omega\in \Omega_0$, the distributions  
of $\mathfrak L_n$ under $P^{\pi,\omega}_0$%\big(L_{n,X}\in \cdot\big)$  
 satisfies a large deviation principle in the space of probability measures on
$\Mcal_1(\Omega_0 \times  \mathcal U_d)$ equipped with the weak topology.
The rate function $\mathfrak I^{\star\star}$ is the double Fenchel-Legendre transform of the functional $\mathfrak I$ defined in \eqref{Idef}. Furthermore, $\mathfrak I^{\star\star}$ is convex and has compact level sets.%the level sets of $\mathfrak I^{\star\star}$ are compact.
\end{theorem}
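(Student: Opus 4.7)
Since $\Omega_0$ is a closed, hence compact, subset of $\{0,1\}^{\mathbb B_d}$, the weak topology renders $\mathcal M_1(\Omega_0\times\mathcal U_d)$ compact. The rate function $\mathfrak I^{\star\star}$, as a double Fenchel-Legendre transform, is by construction the supremum of the weakly continuous affine maps $\mu\mapsto\langle f,\mu\rangle-\mathfrak I^\star(f)$, and therefore convex and lower semicontinuous; compactness of level sets comes for free, since closed subsets of a compact space are compact. So the convexity and compact-level-set claims of the theorem are automatic once the LDP itself is proved.

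For the LDP, I would follow a Bryc-style convex-analytic program. Set
$$\Lambda_n(f,\omega) := \frac{1}{n}\log E_0^{\pi,\omega}\!\Bigl[\exp\bigl(n\langle f,\mathfrak L_n\rangle\bigr)\Bigr],\qquad f\in C_b(\Omega_0\times\mathcal U_d),$$
and aim to prove that $\Lambda(f) := \lim_n\Lambda_n(f,\omega)$ exists for $\P_0$-a.e.\ $\omega$ and every such $f$, is non-random, and equals $\mathfrak I^\star(f)$. Granted this, compactness of $\mathcal M_1(\Omega_0\times\mathcal U_d)$ and Bryc's inverse Varadhan lemma immediately yield the LDP with rate function $\Lambda^\star=\mathfrak I^{\star\star}$.

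The upper bound $\Lambda(f)\le\mathfrak I^\star(f)$ is obtained via exponential tilting. For every $\mu\in\mathcal M_1^\star$ with $\mathfrak I(\mu)<\infty$, the associated Markov kernel $q_\mu$ provided by Lemma \ref{onetoone} has invariant measure $(\mu)_1\ll\P_0$; writing the expectation under $P_0^{\pi,\omega}$ as an expectation under the tilted path law $Q_0^{q_\mu,\omega}$ times the Radon-Nikodym derivative $\prod_{k=0}^{n-1}\pi_{\tau_{X_k}\omega}(0,\cdot)/q_\mu(\tau_{X_k}\omega,\cdot)$, Jensen's inequality together with Birkhoff's theorem along the environment chain yields $\Lambda(f)\le\langle f,\mu\rangle-\mathfrak I(\mu)$; optimising over $\mu$ gives the claim. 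The matching lower bound amounts to a quenched local LDP bound at every $\mu\in\mathcal M_1^\star$ with $\mathfrak I(\mu)<\infty$: for each weak neighborhood $U\ni\mu$ one shows
$$\liminf_{n\to\infty}\frac{1}{n}\log P_0^{\pi,\omega}[\mathfrak L_n\in U]\ \ge\ -\mathfrak I(\mu),$$
by reversing the same change of measure. Under $Q_0^{q_\mu,\omega}$ the ergodic theorem forces $\mathfrak L_n\to\mu$, and restoring the original law introduces exactly $n\mathfrak I(\mu)+o(n)$ entropy cost. Upgrading this $\mathfrak I$-lower bound to an $\mathfrak I^{\star\star}$-lower bound at a general $\mu$ uses the classical block-concatenation argument: a convex combination $\mu=\sum_i\lambda_i\mu_i$ with each $\mathfrak I(\mu_i)<\infty$ is realized by running the tilted chain $q_{\mu_i}$ on consecutive blocks of length $\lfloor\lambda_i n\rfloor$, at total cost $\sum_i\lambda_i\mathfrak I(\mu_i)$, whose infimum equals $\mathfrak I^{\star\star}(\mu)$.

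The principal difficulty, distinguishing this work from the elliptic setting of Yilmaz and Rosenbluth, is the interaction between non-ellipticity and the lack of translation invariance of $\P_0$. Non-ellipticity forces every variational object to be restricted to $\mathcal M_1^\star$, where the support condition on $\mu$ matches the support of $\pi_\omega(0,\cdot)$ so that all log-Radon-Nikodym derivatives are finite. Transferring a $\P$-stationary ergodic statement to a $\P_0$-a.s.\ statement requires the chemical-distance estimate of Assumption 3 and the hitting-time tail bound of Assumption 4 to control the ``gap'' between successive infinite-cluster sites along each coordinate direction, while the FKG inequality (Assumption 5) is used to guarantee ergodicity of the tilted environment chain. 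Verifying that $q_\mu$ produces not merely an invariant but an ergodic environment chain on $\Omega_0$, uniformly over the class of $\mu$ needed for the variational formulas, is the step where the percolation geometry genuinely intervenes and, I expect, is the main technical hurdle of the paper.
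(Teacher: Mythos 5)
Your overall architecture (prove a.s.\ convergence of the quenched logarithmic moment generating functions $\Lambda_n(f,\omega)$ to $\mathfrak I^\star(f)$, then invoke compactness of $\Mcal_1(\Omega_0\times\mathcal U_d)$ and the Bryc/G\"artner--Ellis machinery) is exactly the paper's reduction of Theorem \ref{thmlevel2} to Theorem \ref{thmmomgen}, and your remarks on convexity and compact level sets of $\mathfrak I^{\star\star}$ are fine. The fatal problem is your ``upper bound'' step. The change of measure to the tilted path law followed by Jensen's inequality and Birkhoff's theorem produces, for each fixed $\mu\in\mathcal M_1^\star$, the inequality $\liminf_n\Lambda_n(f,\omega)\ge\langle f,\mu\rangle-\mathfrak I(\mu)$ --- a \emph{lower} bound --- and taking the supremum over $\mu$ gives precisely the paper's Lemma \ref{lemmalb} and Corollary \ref{corlb}. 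You cannot extract the reverse inequality from the same computation: Jensen points the wrong way, and the inequality $\Lambda(f)\le\langle f,\mu\rangle-\mathfrak I(\mu)$ you assert is simply false for non-optimal $\mu$, since $\Lambda(f)\ge\sup_\nu\{\langle f,\nu\rangle-\mathfrak I(\nu)\}$. In effect your proposal proves the lower bound twice and never proves the upper bound.

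The missing upper bound is where essentially all of the paper's work lies, and no soft argument replaces it. The paper perturbs the exponential functional by correctors $V_G$ attached to a class $\mathcal G_\infty$ of gradient functions on the cluster (uniformly bounded, closed-loop, with an induced mean-zero property adapted to the non-invariance of $\P_0$ under $\tau_e$), proves that these correctors grow sublinearly on $\mathcal C_\infty$ (Theorem \ref{sublinearthm}, which is where the chemical-distance estimate, the tail bound on $\ell$, the FKG inequality and the spatial ergodic theorem actually enter), and deduces $\limsup_n\Lambda_n(f,\omega)\le\inf_{G\in\mathcal G_\infty}\Lambda(f,G)$ (Lemma \ref{ub}). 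The hardest step is then to close the gap $\inf_{G\in\mathcal G_\infty}\Lambda(f,G)\le\mathfrak I^\star(f)$, done in Theorem \ref{thm-lbub} by a min-max/entropic-coercivity argument in which weak limits of approximate gradients $g_n-g_n\circ\tau_e$ are shown to belong to $\mathcal G_\infty$. None of this appears in your proposal. Two smaller corrections: the lower bound for open sets with rate $\mathfrak I^{\star\star}$ needs no block-concatenation, since $\inf_{\mathcal G}\mathfrak I=\inf_{\mathcal G}\mathfrak I^{\star\star}$ on open sets by convex duality; and FKG is not what gives ergodicity of the tilted environment chain (that follows from ergodicity of $\P_0$ under $\pi$, extended to the non-elliptic tilted kernels in Theorem \ref{ergodicthm}) --- it is used in the sublinearity proof.
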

In other words, for $\P_0$- almost every $\omega\in \Omega_0$,
\begin{equation}\label{ldpub}
\limsup_{n\to\infty} \frac 1n \log P^{\pi,\omega}_0 \big(\mathfrak L_n\in \mathcal C\big) \leq -\inf_{\mu\in \mathcal C} \mathfrak I^{\star\star}(\mu) \quad \forall\,\,\mathcal C\subset \Mcal_1(\Omega_0 \times  \mathcal U_d)\,\,\mbox{closed},
\end{equation}
and % and $\P_0$- almost every $\omega\in \Omega_0$,
\begin{equation}\label{ldplb}
\limsup_{n\to\infty} \frac 1n \log P^{\pi,\omega}_0 \big(\mathfrak L_n\in \mathcal G\big) \geq -\inf_{\mu\in \mathcal G} \mathfrak I^{\star\star}(\mu) \quad \forall\,\,\mathcal G\subset \Mcal_1(\Omega_0 \times  \mathcal U_d)\,\,\mbox{open}.
\end{equation}
%\begin{remark}
\noindent A standard computation shows that the functional $\mathfrak I$ defined in \eqref{Idef} is convex on $\Mcal_1(\Omega_0\times\mathcal U_d)$. The following lemma, whose proof is based on the ``zero speed regime" of the SRWPC under a supercritical drift and is deferred to until Section \ref{sec-6}, shows that 
$\mathfrak I^{\star\star}\ne \mathfrak I$.
%\end{remark}
%\textcolor{red}{Can we show the following for percolations with long-range correlations? It seems true but K has not yet checked. Models other than random interlacements will be OK, because the finite energy property holds for those models, and hence the proof of \cite[Theorem 4.1]{Sz02} is applicable.} \textcolor{green}{I suggest writing here that this is a proof for Bernoulli percolation, and that the other cases can be proven similarly [I agree that the zero speed regime exists in all of them, and that is all we need. Nevertheless, doing it right might be too long]}
%We finally prove that $\mathfrak I$ is not lower semicontinuous. 
\begin{lemma}\label{nonlsc}
Let $d\geq 2$. %and $p>p_c(d)$. %%corrected by K
Then $\mathfrak I$ is not lower-semicontinuous on $\Mcal_1(\Omega_0\times \mathcal U_d)$. Hence, $\mathfrak I\ne \mathfrak I^{\star\star}$.
\end{lemma}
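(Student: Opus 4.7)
The strategy exploits the zero-speed regime of biased random walks on supercritical percolation clusters: I will construct a sequence $(\mu_n)_n \subset \mathcal M_1^\star$ with $\sup_n \mathfrak I(\mu_n) < \infty$ whose weak subsequential limit $\mu$ lies outside $\mathcal M_1^\star$. Since $\mathfrak I \equiv +\infty$ off $\mathcal M_1^\star$, this witnesses failure of lower-semicontinuity; combined with the already-noted convexity of $\mathfrak I$, it implies $\mathfrak I \neq \mathfrak I^{\star\star}$.

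For $\lambda > 0$, consider the biased transition kernel
\begin{equation*}
\pi^\lambda_\omega(x, e) = \frac{e^{\lambda (e \cdot e_1)}\, \1_{\{\omega_e = 1\}}(\tau_x \omega)}{\sum_{e' \in \mathcal U_d} e^{\lambda(e' \cdot e_1)}\, \1_{\{\omega_{e'} = 1\}}(\tau_x\omega)},
\end{equation*}
i.e., a simple random walk on the cluster with drift of strength $\lambda$ in direction $e_1$. By the zero-speed theorem for biased random walks on supercritical percolation clusters (Berger--Gantert--Peres and Sznitman for Bernoulli bond percolation; Fribergh--Hammond and subsequent works for the correlated models at hand), there is $\lambda_c \in (0, \infty)$ such that for every $\lambda > \lambda_c$, $X_n/n \to 0$ $P^{\pi^\lambda, \omega}_0$-almost surely on $\P_0$-a.e.\ $\omega$; the mechanism is that dead-end branches of depth $k$ in the cluster trap the walk for $\sim e^{c \lambda k}$ steps. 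Fix such $\lambda > \lambda_c$ and set
\begin{equation*}
\mu_n := \E_{\P_0}\bigl[E^{\pi^\lambda, \omega}_0[\mathfrak L_n]\bigr].
\end{equation*}
A translation-invariance computation combined with $\P_0 \ll \P/\P(\Omega_0)$ shows $(\mu_n)_1 \ll \P_0$, the positivity condition of \eqref{relevantmeasures} is inherited from $\pi^\lambda$, and $(\mu_n)_1 - (\mu_n)_2 = O(1/n)$ (absorbable by a vanishing perturbation), so $\mu_n \in \mathcal M_1^\star$. Convexity of $\mathfrak I$ together with the pointwise bound
\begin{equation*}
\sum_{e \in \mathcal U_d} \pi^\lambda_\omega(0, e) \log \frac{\pi^\lambda_\omega(0, e)}{\pi_\omega(0, e)} \leq 2\lambda + \log(2d),
\end{equation*}
obtained from $\log(\pi^\lambda_\omega/\pi_\omega) = \lambda\, e \cdot e_1 - \log(Z^\lambda_\omega/Z_\omega)$ with $|\log(Z^\lambda_\omega/Z_\omega)| \leq \lambda + \log(2d)$, then give $\sup_n \mathfrak I(\mu_n) < \infty$.

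Let $\mu$ be any weak subsequential limit of $(\mu_n)_n$; the key claim is $(\mu)_1 \perp \P_0$. The quantitative consequence of the zero-speed theorem is that under $P^{\pi^\lambda, \omega}_0$ the walker spends an asymptotic fraction $1 - o_n(1)$ of its time in $[0, n]$ inside dead-end branches of depth exceeding any prescribed level $M$ (a standard consequence of the exponential trapping time dominating the contribution of the deepest trap encountered). For each finite $M$, the local event ``the origin lies within a connected subgraph of size at most $M$ attached to the surrounding cluster through a single vertex'' depends only on edges within distance $O(M)$ of the origin and is hence a continuity set for the weak topology; the zero-speed estimates therefore give $(\mu_n)_1(\{\text{depth at origin}\leq M\}) \to 0$, whence $(\mu)_1(\{\text{depth at origin}\leq M\}) = 0$ for every $M$. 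Letting $M \to \infty$, $(\mu)_1$ is supported on configurations with an infinite dead-end at the origin, a $\P_0$-null event by Assumption 1 (uniqueness of the infinite cluster), Assumption 3 (the chemical-distance estimate), and the FKG inequality of Assumption 5, which together ensure $\P_0$-a.s.\ existence of multiple disjoint infinite paths from any neighborhood of the origin. Hence $\mu \notin \mathcal M_1^\star$ and $\mathfrak I(\mu) = +\infty$, completing the argument. The main technical hurdle is the trap-concentration step above, which requires invoking the fine trap-structure estimates underlying the zero-speed theorems in the correlated-percolation setting.
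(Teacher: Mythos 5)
Your starting point --- push the walk into the zero-speed regime with a strong drift in direction $e_1$, so that finite-entropy approximants converge weakly to a limit of infinite entropy --- is exactly the idea of the paper's proof, which uses the multiplicatively biased kernel $\pi^{\ssup\beta}$ with $\beta>\beta_u$ and the zero-speed results of Berger--Gantert--Peres, Sznitman and \cite{FP16}. But your execution has two genuine gaps. First, your $\mu_n=\E_{\P_0}\big[E^{\pi^\lambda,\omega}_0[\mathfrak L_n]\big]$ is \emph{not} in $\Mcal_1^\star$: its two marginals differ by the $O(1/n)$ boundary terms, so by the definition \eqref{Idef} one has $\mathfrak I(\mu_n)=+\infty$ and your entropy bound is vacuous as written. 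The ``vanishing perturbation'' you invoke is not innocent. By Lemma \ref{onetoone}, elements of $\Mcal_1^\star$ are precisely the measures $\tilde\pi(\omega,e)\phi(\omega)\,\d\P_0$ with $(\tilde\pi,\phi)\in\mathcal E$, and the whole content of the zero-speed regime (via Kesten's lemma) is that $\pi^\lambda$ admits \emph{no} invariant density. To land a perturbation of $\mu_n$ inside $\Mcal_1^\star$ you must therefore produce kernels near $\pi^\lambda$ that \emph{do} admit invariant densities, and whether any exist is exactly the dichotomy the paper's proof confronts. In the case where every kernel in a whole neighborhood of $\pi^{\ssup\beta}$ fails to have an invariant density, no approximating sequence of your type exists at all, and the paper argues completely differently: it assumes lower semicontinuity, invokes the already-proved LDP of Theorem \ref{thmlevel2} to force super-exponential decay of $P^{\pi,\omega}_0\{\mathfrak L_n\in\mathfrak n\}$, and contradicts this with the uniform two-sided bound on the relative entropy of $\pi^{\ssup\beta}$ against $\pi$, which caps the decay at exponential. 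Your proposal has no analogue of this horn of the dichotomy.

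Second, your identification of the weak limit rests on a trap-concentration statement --- that the walk spends an asymptotic fraction $1-o_n(1)$ of its time in dead ends of depth exceeding every fixed $M$ --- which is far stronger than the zero-speed theorems you cite and is not, to my knowledge, available for the correlated models treated here (random-cluster, random interlacements, level sets of the Gaussian free field), where even the bare zero-speed statement requires care. You flag this as ``the main technical hurdle,'' but it is load-bearing: without it you cannot conclude $(\mu)_1\perp\P_0$, and there is the further delicacy that ``depth at the origin $\le M$'' is not obviously a local (hence continuity-set) event. The paper sidesteps all of this: it never shows the limit is singular, only that the limit cannot be represented as $\tilde\pi\phi\,\d\P_0$ with $(\tilde\pi,\phi)\in\mathcal E$ and $\tilde\pi=\pi^{\ssup\beta}$, which follows directly from Kesten's lemma once the speed vanishes. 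I would restructure your argument around the set $\mathcal E$ and the neighborhood dichotomy rather than around the expected empirical measure and singularity of its limit.
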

%\noindent 
 %ADD EXPLANATION ABOUT THE PROOF OF THE LEMMA. 

We remark that Theorem \ref{thmlevel2} is an easy corollary to the existence of the limit
$$
\lim_{n\to\infty} \frac 1n \log E^{\pi,\omega}_0 \big\{\exp\{n\big \langle f, \mathfrak L _n\big\rangle\big\}\big\}
=\lim_{n\to\infty} \frac 1n \log E^{\pi,\omega}_0 \bigg\{\exp\bigg( \sum_{k=0}^{n-1}f\big(\tau_{X_k}\omega, X_k-X_{k-1}\big)\bigg)\bigg\},
$$
for every continuous, bounded function $f$ on $\Omega_0 \times  \mathcal U_d$ and the symbol $\langle f, \mu\rangle$ denotes, in this context, the integral $\int_{\Omega_0} \d \P_0 (\omega) \sum_{e\in \mathcal U_d} f(\omega,e) \d \mu(\omega,e)$.
We formulate it as a theorem.
\begin{theorem}[Logarithmic moment generating functions]\label{thmmomgen}
For $d\geq 2$, $p> p_c(d)$ and every continuous and bounded function $f$ on $\Omega_0 \times  \mathcal U_d$,
$$
\lim_{n\to\infty} \frac 1n \log E^{\pi,\omega}_0 \bigg\{\exp\bigg( \sum_{k=0}^{n-1}f\big(\tau_{X_k}\omega, X_k-X_{k-1}\big)\bigg)\bigg\} = \sup_{\mu\in \Mcal_{1}^{\star}} \big\{\langle f,\mu\rangle- \mathfrak I(\mu)\big\} \quad\P_0-\mbox{a.s.}
$$
\end{theorem}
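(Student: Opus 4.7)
The plan is to establish the equality by proving matching upper and lower bounds for
$$\lambda_n(\omega) := \frac{1}{n}\log E^{\pi,\omega}_0\bigg[\exp\bigg(\sum_{k=0}^{n-1} f\big(\tau_{X_k}\omega, X_{k+1}-X_k\big)\bigg)\bigg],$$
with common limit $\lambda(f):=\sup_{\mu \in \Mcal_1^\star}\{\langle f,\mu\rangle - \mathfrak I(\mu)\}$. Once Theorem \ref{thmmomgen} is available, the level-$2$ LDP of Theorem \ref{thmlevel2} will follow by Bryc's inverse to Varadhan's lemma on the compact space $\Mcal_1(\Omega_0 \times \mathcal U_d)$, yielding the rate function $\mathfrak I^{\star\star}$ as the Fenchel-Legendre transform of the convex functional $f \mapsto \lambda(f)$.

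For the lower bound $\liminf_n \lambda_n(\omega) \geq \lambda(f)$, I would fix $\mu \in \Mcal_1^\star$ with $\mathfrak I(\mu)<\infty$ and tilt the walk by the induced Markov kernel $\pi^\mu_\omega(0,e)=\d\mu(\omega,e)/\d(\mu)_1(\omega)$. By Lemma \ref{onetoone}, the measure $(\mu)_1 \ll \P_0$ is invariant for this environment chain, and after verifying its ergodicity (transferring the $\tau$-ergodicity of $\P_0$ via a Kozlov-type argument), the Radon-Nikodym identity
$$\Lambda_n(\omega) = E^{\pi^\mu,\omega}_0\bigg[\exp\bigg(\sum_{k=0}^{n-1} f(\tau_{X_k}\omega, Y_k) - \sum_{k=0}^{n-1} \log \frac{\pi^\mu_{\tau_{X_k}\omega}(0, Y_k)}{\pi_{\tau_{X_k}\omega}(0, Y_k)}\bigg)\bigg]$$
(with $Y_k := X_{k+1}-X_k$) together with Jensen's inequality reduces $\liminf_n \lambda_n(\omega)$ to the pointwise ergodic average, which equals $\langle f,\mu\rangle - \mathfrak I(\mu)$ $(\mu)_1$-a.s. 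Since $(\mu)_1 \ll \P_0$, optimizing over a countable dense subset of $\Mcal_1^\star$ yields the bound on a common $\P_0$-full set.

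For the upper bound, the plan is to produce, for every $\eps>0$, a measurable $u_\eps: \Omega_0 \to \R$ satisfying the approximate super-eigenfunction inequality
$$\sum_{e\in \mathcal U_d} \pi_\omega(0,e) \exp\bigl(f(\omega,e) + u_\eps(\tau_e\omega) - u_\eps(\omega)\bigr) \leq \exp(\lambda(f)+\eps), \qquad \P_0\text{-a.e. } \omega.$$
Given such $u_\eps$, induction on $n$ immediately yields
$$E^{\pi,\omega}_0\Big[\exp\bigl(\textstyle\sum_{k=0}^{n-1} f(\tau_{X_k}\omega, Y_k) + u_\eps(\tau_{X_n}\omega) - u_\eps(\omega)\bigr)\Big] \leq e^{n(\lambda(f)+\eps)},$$
and hence $\lambda_n(\omega) \leq \lambda(f)+\eps + \frac{1}{n}\bigl(u_\eps(\omega) - \inf_{|x|\leq n} u_\eps(\tau_x\omega)\bigr)$, so the matching bound will follow provided the residual is shown to be sublinear along typical trajectories, which is where the chemical-distance bounds of Assumptions 3 and 4 will be used to control the range of $u_\eps$ on the relevant graph-ball of $\mathcal C_\infty$.

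The construction of $u_\eps$ is the main technical obstacle and is where the non-ellipticity of SRWPC genuinely bites. Following the convex-analytic scheme of Kosygina-Rezakhanlou-Varadhan, Rosenbluth and Yilmaz, convex duality represents $\lambda(f)$ as an infimum over \emph{mean-zero cocycles} rather than exact gradients, so one must approximate near-optimal cocycles by exact gradients $u_\eps(\tau_e \omega) - u_\eps(\omega)$ with vanishing error. In the elliptic case this approximation is performed by cylinder mollification and relies crucially on $V=-\log \pi \in L^p$ for $p > d$, which fails here; instead the plan is to use the FKG inequality (Assumption 5) to propagate gradient-type estimates across the cluster and invoke the chemical distance estimate and hitting-time bounds (Assumptions 3 and 4) to force $u_\eps$ to remain sublinear on typical trajectories even though it may blow up near vacant sites. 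The failure of lower semicontinuity of $\mathfrak I$ (Lemma \ref{nonlsc}) confirms that this indirect passage through cocycles, and ultimately through the double Fenchel-Legendre transform $\mathfrak I^{\star\star}$, is genuinely unavoidable. Once both bounds are in place, they combine to give the claimed identity $\P_0$-a.s.
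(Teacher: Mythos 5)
Your overall architecture --- lower bound by tilting the environment chain plus an ergodic theorem, upper bound via a super-eigenfunction/Feynman--Kac estimate with a sublinear boundary term, equivalence by convex duality --- is the paper's architecture, and your lower bound is essentially Lemma \ref{lemmalb} and Corollary \ref{corlb}. But there are two genuine gaps. For the upper bound you posit, for each $\eps>0$, a measurable $u_\eps:\Omega_0\to\R$ with $\sum_e\pi_\omega(0,e)\exp\bigl(f(\omega,e)+u_\eps(\tau_e\omega)-u_\eps(\omega)\bigr)\leq e^{\lambda(f)+\eps}$ $\P_0$-a.e. No such \emph{exact} gradient is produced by the duality, and its existence is doubtful: what the variational analysis yields is a shift-covariant cocycle $G$ on the cluster (sums over closed loops vanish), obtained in the paper as a weak limit of exact gradients $G_n=g_n-g_n\circ\tau_e$ (Lemma \ref{lemma-last}); a cocycle whose corrector is merely sublinear cannot in general be written as $u(\tau_e\omega)-u(\omega)$ up to an $L^\infty$-small error --- if it could, the corrector would be bounded rather than sublinear. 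Your own remark that one ``must approximate near-optimal cocycles by exact gradients with vanishing error'' is precisely the step that cannot be carried out, and it is also unnecessary: the conditioning/telescoping argument works verbatim for a cocycle $G\in\mathcal G_\infty$, the boundary term being the corrector $V_G(\omega,X_n)$, and the real work is Theorem \ref{sublinearthm} (sublinearity of $V_G$), which in turn hinges on the \emph{induced} mean-zero property $\E_0[V_G(\cdot,k(\omega,e)e)]=0$ relative to the induced shifts $\sigma_e$ --- the shifts that actually preserve $\P_0$, unlike $\tau_e$. This ingredient, which is the reason the class $\mathcal G_\infty$ is defined as it is, is absent from your outline.

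Second, you defer the equivalence of bounds to ``the convex-analytic scheme of Kosygina--Rezakhanlou--Varadhan, Rosenbluth and Yilmaz,'' but the paper's central point is that this scheme does not transplant: the finite-$\sigma$-algebra truncation and the conditional expectations $\E[f_k-f_k\circ\tau_e\,|\,B_{k-1}]$ rely on $\tau_e$ preserving the environment law (tower property; mean-zero for free), which fails for $\P_0$. The paper replaces it with an entropy-coercivity and entropy-penalization argument (two applications of the Von Neumann min-max theorem, Lemmas \ref{thm-lbub-lemma1} and \ref{thm-lbub-lemma2}), which produces honest gradients $G_n$ whose weak limit lands in $\mathcal G_\infty$; verifying the induced mean-zero property of that limit (Lemma \ref{lemma-last-last}) requires the exponential tails of Assumption 4 together with uniform $L^p(\P_0)$ bounds on $G_n$. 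Without this step, or a concrete substitute for it, the identity $\overline H(f)=\inf_{G\in\mathcal G_\infty}\Lambda(f,G)$ of Theorem \ref{thm-lbub} is unproved and your two bounds do not meet.
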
%\begin{remark}
%The functional $\mathfrak I$ is convex. If in addition it is lower semicontinuous, then $\mathfrak I^{\star\star}=\mathfrak I$. So we need to check if $\mathfrak I$ is lower semicontinuous. 
%\end{remark}
We will first prove Theorem \ref{thmmomgen} and deduce Theorem \ref{thmlevel2} directly.

Note that via the contraction map $\xi: \Mcal_1(\Omega_0 \times  \mathcal U_d) \longrightarrow \R^d$, 
$$
\mu\mapsto \int_{\Omega_0} \sum_e \, e\,\d\mu(\omega,e),
$$
we have $\xi(\mathfrak L_{n})= \frac{X_n-X_0}n= \frac {X_n} n$. Our second main result is the following corollary to Theorem \ref{thmlevel2}.

\begin{cor}[Quenched LDP for the mean velocity of SRWPC]\label{thmlevel1}
Let $d\geq 2$. %and $p>p_c(d)$. %corrected by K
Then the distributions $P^{\pi,\omega}_0\big(\frac {X_n}n\in \cdot\big)$ satisfy a large deviation principle %in the space
%$\R^d$,
with a rate function 
$$%\begin{equation}\label{level1rate}
\begin{aligned}
 J(x)&= \inf_{\mu\colon \xi(\mu)=x} \mathfrak I(\mu)\qquad x\in\R^d.%\\
%&=\inf_{\mu\colon \xi(\mu)=x} \mathfrak I(\mu).%&=^{!} \inf_{\mu\colon \Pi(\mu)=x} \mathfrak I(\mu).
\end{aligned}
$$%\end{equation}
%The level sets of $\mathfrak I^{\star\star}$ are compact.
\end{cor}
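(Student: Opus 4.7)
My plan is to deduce the level-1 LDP from the level-2 LDP of Theorem \ref{thmlevel2} via the contraction principle, and then identify the resulting rate function with $J$ by combining Fenchel--Moreau duality with the logarithmic moment generating function formula of Theorem \ref{thmmomgen}.

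First, the map $\xi\colon\mathcal{M}_1(\Omega_0\times\mathcal{U}_d)\to\R^d$, $\mu\mapsto\int_{\Omega_0}\sum_e e\,d\mu(\omega,e)$, is continuous in the weak topology (its integrand is bounded and continuous on $\Omega_0\times\mathcal{U}_d$, and $\mathcal{U}_d$ is finite), and a telescoping computation gives $\xi(\mathfrak{L}_n)=n^{-1}\sum_{k=0}^{n-1}(X_{k+1}-X_k)=X_n/n$ because $X_0=0$. Since by Theorem \ref{thmlevel2} the distributions of $\mathfrak{L}_n$ satisfy a quenched LDP with the good, convex, lower-semicontinuous rate function $\mathfrak{I}^{\star\star}$, the contraction principle yields a quenched LDP for $X_n/n$ with the good, convex, lower-semicontinuous rate function
\begin{equation*}
\tilde J(x)\,=\,\inf\bigl\{\mathfrak{I}^{\star\star}(\mu)\colon\xi(\mu)=x\bigr\}.
\end{equation*}

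The inequality $\tilde J\le J$ is then immediate from $\mathfrak{I}^{\star\star}\le\mathfrak{I}$. For the reverse, I compute the log-moment generating function two ways. On the one hand, Theorem \ref{thmmomgen} applied to $f_\lambda(\omega,e)=\lambda\cdot e$, $\lambda\in\R^d$, gives, $\P_0$-a.s.,
\begin{equation*}
\Lambda(\lambda)\,:=\,\lim_{n\to\infty}\frac{1}{n}\log E_0^{\pi,\omega}\bigl[e^{\lambda\cdot X_n}\bigr]\,=\,\sup_{\mu\in\mathcal{M}_1^\star}\bigl\{\lambda\cdot\xi(\mu)-\mathfrak{I}(\mu)\bigr\}.
\end{equation*}
On the other hand, because $\|X_n/n\|_\infty\le 1$ Varadhan's lemma applies to the LDP above and yields $\Lambda(\lambda)=\sup_{x\in\R^d}\{\lambda\cdot x-\tilde J(x)\}$; since $\tilde J$ is convex and lsc, Fenchel--Moreau gives $\tilde J(x)=\sup_\lambda\{\lambda\cdot x-\Lambda(\lambda)\}$. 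Combining these,
\begin{equation*}
\tilde J(x)\,=\,\sup_\lambda\inf_{\mu\in\mathcal{M}_1^\star}\bigl\{\lambda\cdot(x-\xi(\mu))+\mathfrak{I}(\mu)\bigr\},
\end{equation*}
and provided the supremum and infimum can be interchanged, the inner $\sup_\lambda$ equals $+\infty$ unless $\xi(\mu)=x$, collapsing the right-hand side to $\inf\{\mathfrak{I}(\mu)\colon\xi(\mu)=x\}=J(x)$.

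The main technical obstacle is justifying this minimax exchange. Since $\mathfrak{I}$ fails to be lower-semicontinuous (Lemma \ref{nonlsc}), Sion's theorem does not apply directly. The way out is a Fenchel--Rockafellar type strong duality: on the weakly compact convex simplex $\mathcal{M}_1(\Omega_0\times\mathcal{U}_d)$ the functional $\mathfrak{I}$ is convex and the constraint $\xi(\mu)=x$ is linear, so the required constraint qualification reduces to producing, for each $x$ in the interior of the velocity polytope $\mathrm{conv}(\mathcal{U}_d)$, an explicit element $\mu\in\mathcal{M}_1^\star$ with $\mathfrak{I}(\mu)<\infty$ and $\xi(\mu)=x$. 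Such a $\mu$ can be built by tilting the transition kernel $\pi_\omega$ of the environment chain so that its mean drift equals $x$; the entropic cost of this tilt is finite as long as $x$ lies in the interior. For such $x$ there is no duality gap and $\tilde J(x)=J(x)$, and the identification extends to the entire effective domain by the convexity of $J$ together with the lower-semicontinuity of $\tilde J$.
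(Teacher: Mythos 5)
Your overall architecture coincides with the paper's: apply the contraction principle to Theorem \ref{thmlevel2}, and then identify $\inf_{\xi(\mu)=x}\mathfrak I^{\star\star}(\mu)$ with $\inf_{\xi(\mu)=x}\mathfrak I(\mu)$. The paper disposes of the identification in one line by appealing to convexity of $\mathfrak I$ and $\mathfrak I^{\star\star}$, whereas you reduce it, correctly, to the single identity $J=J^{\star\star}$: since both $J^\star$ and $\tilde J^\star$ equal the limiting logarithmic moment generating function of Theorem \ref{thmmomgen} evaluated at $f(\omega,e)=\lambda\cdot e$, and $\tilde J$ is proper, convex and lower semicontinuous, one gets $\tilde J=J^{\star\star}\le J$, and everything up to your min--max display is sound.

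The two steps you offer to remove the remaining duality gap do not work as stated. First, the Slater point: you propose, for $x$ in the interior of $\mathrm{conv}(\mathcal U_d)$, to tilt $\pi_\omega$ so that its mean drift equals $x$ at finite entropic cost. But a tilted kernel $\tilde\pi$ gives a finite value of $\mathfrak I$ only if $(\tilde\pi,\phi)\in\mathcal E$, i.e.\ only if $\tilde\pi$ admits an invariant density $\phi\ll\P_0$ --- and the proof of Lemma \ref{nonlsc} rests precisely on the fact that for a supercritical drift ($\beta>\beta_u$, the zero-speed regime) no such $\phi$ exists, so the corresponding $\mu$ is not in $\Mcal_1^\star$ and $\mathfrak I(\mu)=\infty$. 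Moreover the stationary drift $\xi(\mu)=\int\phi\sum_e e\,\tilde\pi\,\d\P_0$ depends on the unknown $\phi$, so it cannot be prescribed by a local choice of the tilt. (For equality of primal and dual values on $\mathrm{ri}(\mathrm{dom}\,J)$ no Slater point is actually needed: a proper convex function on $\R^d$ automatically agrees with its biconjugate on the relative interior of its domain.) Second, and more seriously, the claimed extension ``to the entire effective domain by the convexity of $J$ together with the lower-semicontinuity of $\tilde J$'' is not a valid inference: a proper convex function may strictly exceed its lsc hull on the relative boundary of its domain (take $J=0$ on $(-1,1)$, $J(\pm1)=1$, $J=\infty$ elsewhere), in which case $\tilde J=J^{\star\star}<J$ at the boundary while $\tilde J$ is still lsc; that only re-proves $\tilde J\le J$, which you already had. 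What is genuinely needed at relative-boundary points of $\mathrm{dom}\,J$ is an argument that $J$ itself is lower semicontinuous there --- equivalently, that near-minimizers of $\mathfrak I$ on nearby fibers $\xi^{-1}(x_n)$ can be replaced by competitors on the fiber $\xi^{-1}(x)$ without increasing $\mathfrak I$ (this cannot be done by passing to a weak limit, precisely because $\mathfrak I$ is not lsc). This is the one substantive point in the identification, and your proposal does not supply it.
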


\begin{remark}\label{rmk-Kubota}
Note that Corollary \ref{thmlevel1} has been obtained by Kubota (\cite{K12}) for the SRWPC based on the method of Zerner (\cite{Z98}). 
Kubota used sub-addtivity and overcame the lack of the moment criterion of Zerner
by using classical results about the geometry of the percolation. %comment by K ; Antal-Pisztora is essentially used. 
This way he obtained a rate function which is convex and is given by the Legendre transform of the {\it{Lyapunov exponents}} derived by Zerner (\cite{Z98}). 
However, using the sub-additive ergodic theorem one does not get any expression or formula for the rate function, nor does the sub-additivity
seem amenable for deriving a {\it{level 2}} quenched LDP as in Theorem \ref{thmlevel2}.
%\cite{O13} follows \cite{K12} as a strategy, except for the proof of the sub-addtivity of the Lyapunov exponents.
%though with transition probabilities slightly different from ours 
%(Kubota considered a random walk on the cluster which picks a neighbor at random and if the corresponding edge is occupied, the walk moves to its neighbor. 
%If the edge is vacant, the move is suppressed, i.e., the walk is {\it{lazy}}. Note that in our model the random walk takes no pauses, i.e., the random walk is {\it{agile}}). 
%However, his method of proof as well as the description of the rate function $J$ is completely different from ours. 
%\textcolor{blue}{Okamura \cite{O13} extended \cite{K12} to percolations with long-range correlations satisfying Assumptions 1,2, and 3 above, by following Kubota's strategy. This means that Corollary \ref{thmlevel1} holds under Assumptions 1,2, and 3 only. So Assumptions 4 and 5 are new assumptions for the level-2 large deviation principle.}  %added by K 
%See section 2.1 for a comparison of results and proof techniques. 
\end{remark}

\begin{remark}\label{rmk-Mourrat}
 Mourrat (\cite{M12}) also considered level-1 quenched large deviation principle for a model of random walk in random potential containing the non-elliptic case, by taking a strategy similar to \cite{Z98}. 
Note that the framework in \cite{M12} gives equal probability with each path of a fixed length in an infinite cluster, 
so the random walk can be regarded as a Markov chain on the {\it augmented} space by adding a cemetery point to $\mathbb{Z}^d$ (see \cite{Z98-I}).
%The strategies taken in \cite{M12} and \cite{K12} are similar to each other, 
%however, \cite{K12} is not a direct consequence of \cite{M12}. 
As we will see, our arguments will rely on the random walk being a Markov chain on an infinite percolation cluster $\mathcal{C}_{\infty}$,
and it will be intiguing to consider extensions of our results to the random walk in random potential (RWRP) framework considered in \cite{M12}.
Furthermore, given the broad range of models covered in the present paper, it will also be 
interesting to consider potentials that are only invariant and ergodic w.r.t. spatial shifts,
while dropping the i.i.d. requirement imposed in \cite{M12}. 
%Such random environments are also studied in \cite{RSY14}, for example.   
However, in order to derive large deviation principle for random walks on such random environments, 
it is desirable to have good chemical distance estimates on the infinite cluster.% $\mathcal{C}_{\infty}$.
 %(recall {\bf Assumption 3})  
%which will be required for the proof 
%\cite{M12} also uses estimates for the chemical distance obtained by \cite{AP96}. 
%The issue would be resolved if we can derive Assumption 3 from Assumptions 1 and 2 only, but we do not know about it.
\end{remark}

\subsection{Survey of earlier proof technique in the elliptic case and comparison with our method.}\label{sec-results-3}

Earlier relevant work for quenched large deviations was carried out by Kosygina-Rezakhanlou-Varadhan (\cite{KRV06}) for elliptic diffusions in a random drift.
Rosenbluth (\cite{R06}) first adapted this approach to the case of elliptic RWRE and derived a level-1 quenched large deviation principle for the distribution of the mean-velocity 
(the so-called {\it{level-1}} large deviations, recall Corollary \ref{thmlevel1}). Yilmaz (\cite{Y08}) then extended Rosenbluth's work on elliptic RWRE to a finer large deviation result for the 
pair empirical measures of the environment Markov chain (the so-called {\it{level-2}} large deviations, recall Theorem \ref{thmlevel2}). 
In the present case of deriving similar level-2 quenched large deviations for SRWPC, as a guiding philosophy, we also follow the main steps of Yilmaz (\cite{Y08}). 
However, due to fundamental obstacles that come up in several facets stemming from the inherent non-ellipticity of the percolation models,
an actual execution of the existing method \cite{Y08} fails for the present case of SRWPC.
In order to put our present work in context, in this section we will present a brief survey on the existing method that treated the elliptic case of RWRE (\cite{Y08}),
and to emphasize the similarities and differences of our approach to the earlier one, 
and we will also provide a comparative description of the main strategy for the proof of Theorem \ref{thmmomgen} 
that allows the treatment of models that are non-elliptic (like SRWPC), while simplifying 
the earlier proof technique used for the elliptic case. 
This will also underline the technical novelty of the present work.

%for As a guiding philosophy, we follow the robust technique for proving quenched level-2 large deviations 
%used by Yilmaz (\cite{Y08}, see also \cite{R06} and \cite{KRV06}) for random walks in {\it{elliptic}} random environments (i.e., with the standing assumption that the logarithm of
%the random walk transition probabilities posses $d+\eps$ moment w.r.t the law of the random environment). This method is based on three main steps that involve proving {\it{lower bound}}
%of the logarithmic moment generating function, obtaining a corresponding {\it{upper bound}}  and proving the {\it{equivalence of lower and upper bounds}}. On a superficial level, our technique 
%for proving Theorem \ref{thmmomgen} is also based on these three steps. However, due to fundamental obstacles that come up in several facets stemming from the inherent non-ellipticity, 
%an actual execution of the existing method for the elliptic case breaks down in the percolation models we are working with. For the readers guidance, we will briefly comment on these differences 
%as well as the technical novelty of the present paper that enables us to circumvent these obstacles. 

\subsection{Comparison of our proof techniques with the earlier approach used for elliptic RWRE:}
As mentioned before, the purpose of the present subsection is to compare the proof techniques in the present paper to those
of previous work on elliptic RWRE (\cite{KRV06,R06,Y08}).  In particular, and unlike the rest of the paper, the
current subsection is intended for readers familiar with the techniques and ideas of those papers.
To keep notation consistent, in this survey we will continue to denote by $\P$ the law of a stationary and ergodic random environment and by $\pi(\omega,\cdot)$ we will denote the random walk transition probabilities in the random environment.
One of the requirements under which earlier results concerning elliptic RWRE (\cite{R06,Y08}) is the {\it{moment condition}} requiring $\int |\log\pi|^{d+\eps}\d\P<\infty$ for some $\eps>0$. 
The crucial argument is the existence of the limiting logarithmic moment generating function (recall Theorem \ref{thmmomgen}) whose proof splits into three main steps:

\noindent {\it{Lower bound.}} For models in elliptic RWRE, the lower bound part is based on a classical change of measure argument for the environment Markov chain, followed by an application of an ergodic theorem for the tilted Markov chain. This ergodic theorem is standard (see Kozlov \cite{K85}, Papanicolau-Varadhan \cite{PV81}) in the elliptic case where the (tilted) Markov chain transition probabilities are assumed to be strictly positive (as in the case studied in \cite{Y08}).

In the current case of SRWPC, the lower bound also follows the standard method of tilting the environment Markov chain as the elliptic RWRE case. However, for the tilted environment Markov chain for the percolation models,  the requisite ergodic theorem needs to be extended to the non-elliptic case which is the content of Theorem \ref{ergodicthm}.

\noindent{\it{Upper bound.}} For the elliptic RWRE case, the upper bound part of the proof  starts with a ``perturbation" of the exponential moment of the pair empirical measures $\mathfrak L_n$ defined in \eqref{localtime}.
This perturbation comes from integrating certain  ``gradient functions"  w.r.t. the local times $\mathfrak L_n$, and these gradient functions are intrinsically defined by the spatial action of the translation group $\Z^d$ on the environment space. In the elliptic case (\cite{Y08}, \cite{R06} and \cite{KRV06}), the class $\mathcal K$ of such gradient functions $F\in\mathcal K$ are required to satisfy the {\it{closed loop condition}} that underlines their gradient structure, a moment condition that requires $F\in L^{d+\eps}(\P)$,
and a mean-zero condition that demands $\E^{\P}[F]=0$.  Any such $F\in \mathcal K$ leads to its {\it{corrector}} $V_F(\omega,x)=\sum_{j=0}^{n-1} F(\tau_{x_j}\omega,x_{j+1}-x_j)$ which 
is defined as the  integral of the gradient $F$ along any path $x_0,\dots,x_n=x$ between two fixed points $x_0$ and $x_n$. Note that the choice of the path does not influence the integral, thanks to the closed loop condition imposed on $F$. For any $F\in\mathcal K$, Rosenbluth (\cite{R06}) then proved that, the corresponding corrector $V_F$ has a ``sub-linear growth at infinity". Roughly speaking, this means, $\P$-almost surely, $|V_F(\omega,x)|=o(|x|)$ as $|x|\to\infty$. This is a crucial technical step in Rosenbluth's work that is proved adapting the original approach of \cite{KRV06} involving Sobolev embedding theorem and invoking Garsia-Rodemich-Rumsey estimate, and his the proof there hinges on the moment condition $F\in L^{d+\eps}(\P)$.$^1$ \footnote{$^{1}$Recall that the elliptic random environment is also required to satisfy the  moment condition $\E^{\P}[|\log\pi|^{d+\eps}]<\infty$.}
Since for elliptic RWRE, $\P$ is invariant w.r.t. the translations, one then exploits the mean-zero condition of the gradients $F$ and invokes the ergodic theorem to get the desired sub-linearity property. This property implies, in particular, that the effect of the aforementioned perturbation by the corrector $V_F$ in the exponential moment is indeed negligible. This is the crucial argument for the upper bound part for the existing literature on elliptic RWRE. 

Now for the upper bound part for SRWPC, already the aforementioned moment condition of the elliptic case fails (zeroes of SRWPC transition probabilities $\pi$ already make the first moment $\E_0(|\log\pi|)$ possibly infinite). Hence, we are not entitled to follow the method of Rosenbluth (\cite{R06}, see also \cite{GRSY13}) for proving the sub-linear growth property of the correctors.
Moreover, the crucial mean-zero condition required in the elliptic case also fails for percolation due to the fundamental fact that the {\it{spatial action of the shifts $\tau_e$ on $\Omega_0$ is not $\P_0$-measure preserving}}. The lack of these two properties requires that we reformulate the conditions on our class of gradients. Besides the closed loop property in the infinite cluster, we demand uniform boundedness of the gradients in $\P_0$-norm and the validity of an ``induced mean-zero property" to circumvent the above mentioned non-invariant nature of the spatial shifts $\tau_e$ w.r.t. $\P_0$, see Section \ref{subsec-classG} for details. With these assumptions, we prove the requisite "sub-linear growth" property of the correctors corresponding to our gradients, see Theorem \ref{sublinearthm}. Our approach for proving this sub-linearity property is  therefore different from  the existing literature (\cite{R06}, \cite{GRSY13}). Instead, it is based on techniques from ergodic theory, combined with geometric arguments that capture precise control on  the ``chemical distance" (or the geodesic distance) between two points $x$ and $y$ in the infinite cluster $\mathcal C_\infty$ (proved in Lemma \ref{chemdist}), as well as exponential tail bounds for the shortest distance between the origin and the first arrival of the cluster in the positive parts of the co-ordinate axes (proved in Lemma \ref{lemma-ell}). Given the above sub-linear growth property on the infinite cluster which holds the pivotal argument,  we then carry out the same ``corrector perturbation" approach as in the elliptic case to the desired upper bound property, see Lemma \ref{ub}.

\noindent{\it{Equivalence of lower and upper bounds.}} Having established both lower and upper bounds, one then faces the task of matching these two bounds. In the case of elliptic diffusions with a random drift, a
seminal idea was introduced in \cite{KRV06} by applying convex variational analysis followed by applications of certain min-max theorems. The success of this ``min-max" approach relies on, among other requirements, ``compactness" of the underlying variational problem. In the elliptic case, this can be achieved by truncating the variational problem at a finite level which allows the application of the min-max theorems, followed by an approximation procedure by letting the truncation level to infinity. In the lattice, i.e., for elliptic RWRE a similar idea was used (\cite{Y08}, \cite{R06}) in order to use the min-max argument. Indeed, by restricting the variational problem to a finite region in the environment space $\Omega$ and taking conditional expectation w.r.t. a finite $\sigma$-algebra $B_k$, \cite{Y08} then used the min-max theorems for every fixed $k$. Roughly speaking, this leads to the study of conditional expectations 
\begin{equation}\label{def-Fk}
F_k:=\E\big[f_k-f_k\circ \tau_e| B_{k-1}\big],
\end{equation}
for test functions $f_k$, and one needs to prove that $F_k\to F$ as $k\to\infty$ such that $F\in \mathcal K$ (where $\mathcal K$ is the class of gradients with the required properties discussed in the upper bound part). Note that, for every fixed $k$, $F_k$ is not a gradient. However, exploiting the underlying assumption $\E^\P[|\log\pi|^{d+\eps}]<\infty$, one shows that $\{F_k\}_k$ remains uniformly bounded in $L^{d+\eps}(\P)$ so that one can take a weak limit $F$. After successive application of the tower property for the conditional expectations, one then proves that the limit $F$ is indeed a gradient (i.e., satisfies the aforementioned closed loop condition), $F\in L^{d+\eps}(\P)$. Furthermore, $\E_\P[F]=0$, which readily comes for free from \eqref{def-Fk} and the {\it{invariant action}} of $\tau_e$ w.r.t. the environment law $\P$. In particular, $F\in\mathcal K$ and modulo some technical work, this fact also matches the lower and upper bound of the limiting logarithmic moment generating function for the elliptic RWRE case.

\noindent Now for the ``equivalence of bounds" for SRWPC, one can also try to emulate the strategy of (\cite{Y08}, \cite{R06}) by carrying out the same convex variational analysis and 
applying the same min-max theorems by restricting to a finite region and conditional on a finite $\sigma$-algebra $B_k$. However,  taking the conditional expectation as in \eqref{def-Fk} 
w.r.t. $\E_0$ any attempt towards deriving the requisite properties stated in Section \ref{subsec-classG} of the limiting function $F$ completely fails. 
Note that in conditional expectation w.r.t. $\E_0$, one involves the measure $\P_0$ that is not preserved under the action of the shifts $\tau_e$. In particular, we are not entitled to use any tower property. Plus, conditioning w.r.t. a finite $\sigma$-algebra $B_k$ is incompatible for handling possibly long excursions of the infinite cluster before hitting the coordinate axes on each direction, which is a crucial issue one has to handle in order to prove the requisite induced mean-zero property of our limiting gradient.
%and absence of this property for a limiting gradient $F$ leads to a possible failure of the quintessential sub-linear growth property Theorem \ref{sublinearthm}.

\noindent Therefore, for the equivalence of bounds, we take a different route based on an {\it{entropy coercivity}} and {\it{entropy penalization}} method, which constitutes Section \ref{sec-proof-ldp}.
This approach seems to be more natural in that it exploits the built-in structure of relative entropies that is already present in the underlying variational formulas. We make use of the coercivity property of the relative entropies in Lemma \ref{thm-lbub-lemma1} and Lemma \ref{thm-lbub-lemma2} to overcome the lack of the compactness in our variational analysis. One advantage of this method is that our variational analysis leads to the study of {\it{gradients}} directly, where we can work with functions
\begin{equation}\label{def-Gn-intro}
G_n(\omega,e)= g_n(\omega)-g_n(\tau_e\omega),
\end{equation}
on the infinite cluster (see Lemma \ref{lemma-last}), instead of relying on conditional expectations like in \eqref{def-Fk}.% (which does not allow treatment of SRWPC). 
Given the gradient structure of $G_n$, and the estimates proved in Lemma \ref{chemdist} and \ref{lemma-ell}, our analysis then also shows that the limiting gradients satisfy all  the desired properties formulated in Section \ref{subsec-classG} (see Lemma \ref{lemma-last}) and the lower and upper bounds are readily matched. 
We also remark that the argument in our approach works equally well for the elliptic RWRE model considered before, see Remark \ref{rmk-simplify}. In particular, our method completely avoids the 
tedious effort needed in the earlier approach through the use of conditional expectations, tower property and Mazur's theorem in order to show that the limit of $F_k$ defined in \eqref{def-Fk} is a gradient, and the equivalence of upper and lower bounds. In our approach, any weak limit of $G_n$ defined in \eqref{def-Gn-intro} is immediately a gradient and this readily makes the lower and the upper bound match (again, it is imperative here that we can work with $G_n$ which is itself a gradient, unlike \eqref{def-Fk}). We refer to \cite[Sect.3.3]{R06} or \cite[Sect.2.1.3]{Y08} for a comparison with our approach in proving Theorem \ref{thm-lbub}.

\begin{remark}[Differences to the Kipnis-Varadhan corrector]\label{KVremark}
Let us finally remark that the class of gradient functions introduced in Section \ref{subsec-classG} share some similarities to the gradient of Kipnis-Varadhan corrector which is a central object of interest for reversible random motions in random media. Particularly for SRWPC this is crucial for proving a quenched central limit theorem (\cite{SS04}, \cite{MP07}, \cite{BB07}, \cite{PRS15})-- the corrector expresses the deformation caused by a harmonic embedding of the random walk in the infinite cluster in $\R^d$, and modulo this deformation, the random walk becomes a martingale. 
 However, our gradient functions that are defined in Section \ref{subsec-classG} are structurally different from the gradient of the Kipnis-Varadhan corrector. 
 Though they share similar properties as {\it{gradients}}, our gradients  miss the above mentioned  {\it{harmonicity}} property enjoyed by the Kipnis-Varadhan corrector. 
 This can be explained by the fact that large deviation lower bounds are based on a certain {\it{tilt}} which spoils any inherent reversibility of the model, which is a crucial base of Kipnis-Varadhan theory.\qed
 \end{remark}

The rest of the article is organized as follows. 
In Section \ref{sec-lb}, Section \ref{sec-classG} and Section \ref{sec-proof-ldp} we prove the lower bound, the upper bound and the equivalence of bounds for Theorem \ref{thmmomgen}, respectively. Section \ref{sec-6} is devoted to the proofs of Theorem \ref{thmmomgen}, Theorem \ref{thmlevel2}, Corollary \ref{thmlevel1} and Lemma \ref{nonlsc}.

\section{Lower bounds of Theorem \ref{thmlevel2} and Theorem \ref{thmmomgen}}\label{sec-lb}
We first introduce a class of environment Markov chains for SRWPC and prove an ergodic theorem for these in Section \ref{sec-ergthm}. We then derive the lower bounds
for Theorem \ref{thmlevel2} and Theorem \ref{thmmomgen} in Section \ref{subsec-lb}.
\subsection{An ergodic theorem for Markov chains on non-elliptic environments}\label{sec-ergthm}

In this section 
%we start with the proof of the lower bound asymptotics in Theorem \ref{thmlevel2} and Theorem \ref{thmmomgen}
we need some input from the environment seen from the particle, which, with respect to a
suitably changed measure, possesses important ergodic properties. 

%\subsection{Markov chains on environments and the ergodic theorem.}
 
Recall that, given the transition probabilities $\pi$ from \eqref{pidef}, for $\P_0$- almost every $\omega\in \Omega_0$, the process $(\tau_{X_n}\omega)_{n\geq 0}$ is a Markov chain with transition kernel
$$
(R_\pi g)(\omega)= \sum_{e\in\mathcal U_d} \pi_\omega(0,e) g(\tau_e \omega),
$$
for every function $g$ on $\Omega_0$ which is measurable and bounded.

We need to introduce a class of transition kernels on the space of environments. We denote by $\widetilde \Pi$ the space of functions $\tilde\pi: \Omega_0\times \mathcal U_d\rightarrow [0,1]$  
which are measurable in $\Omega_0$, $\sum_{e\in \mathcal U_d} \tilde\pi(\omega,e)=1$ for almost every $\omega\in \Omega_0$ and for every $\omega\in \Omega_0$ and $e\in \mathcal U_d$, 
\begin{equation}\label{pitilde}
\tilde\pi(\omega,e)=0 \,\,\,\mbox{if and only if}\,\,\, \pi_\omega(0,e)=0.
\end{equation}
For every $\tilde\pi\in\widetilde\Pi$ and $\omega\in \Omega_0$, we define the corresponding quenched probability distribution of the Markov chain $(X_n)_{n\geq 0}$ by
\begin{equation}
\begin{aligned}
&P^{\tilde\pi,\omega}_0(X_0=0)=1\\
&P^{\tilde\pi,\omega}_0(X_{n+1}=x+e| X_n=x)= \tilde\pi(\tau_x\omega,e). %A typo corrected by K
\end{aligned}
\end{equation}
%Note that, corresponding to each $\tilde \pi \in \widetilde \Pi$,  for $\P_0$-almost every $\omega\in \Omega_0$, we can also have a Markovian kernel
%$$
%R_{\tilde\pi}(\omega)= \sum_e \tilde\pi(\omega,e) f(\tau_e \omega),
%$$
%for every $f$ which is bounded and measurable.

With respect to every $\tilde\pi\in \widetilde \Pi$ we also have a transitional kernel 
$$
(R_{\tilde \pi} g)(\omega)= \sum_{e\in\mathcal U_d} \tilde\pi(\omega,e) g(\tau_e \omega),
$$
for every measurable and bounded $g$. %comment by K : It can happen that $\tau_{e} \o \notin \Omega_0$. You let $f := 0$ outside $\Omega_0$ or restrict the choice of $e$ in the sum?
For every measurable function $\phi\geq 0$ with $\int \phi \d \P_0=1$, we say that the measure $\phi\d \P_0$ is $R_{\tilde \pi}$-invariant, or simply $\tilde\pi$-invariant, if,
\begin{equation}\label{invdensity}
\phi(\omega)= \sum_{e\in\mathcal U_d} \tilde\pi\big(\tau_{-e}\omega, e\big) \phi\big(\tau_{-e}\omega\big).
\end{equation}
%comment by K :  It can happen that $\tau_{-e} \o \notin \Omega_0$. Let $\tilde \pi (\cdot, e) := 0$ outside $\Omega_0$? 
Note that in this case,
\begin{equation}\label{invdensity_g}
\int g(\omega)\phi(\omega)d\P_0(\omega) = \int (R_{\tilde \pi} g) (\omega)\phi(\omega)d\P_0(\omega),
\end{equation}
for every bounded and measurable $g$.

We denote by $\mathcal E$ such pairs of $(\tilde \pi,\phi)$, i.e.,
\begin{equation}\label{ergodicpair}
\mathcal E= \bigg\{ (\tilde \pi, \phi)\colon \, \tilde\pi\in \widetilde\Pi, \phi\geq 0, \E_0(\phi)=1, \, \phi\d \P_0 \,\mathrm{is}\, \,\tilde\pi-\,\mathrm{invariant}\bigg\}.
\end{equation}
We need an elementary lemma which we will be using frequently. Recall the set $\Mcal_1^\star$ from \eqref{relevantmeasures}.
\begin{lemma}\label{onetoone}
There is a one-to-one correspondence between the sets $\Mcal_1^\star$ and $\mathcal E$.
\end{lemma}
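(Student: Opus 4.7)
The plan is to exhibit a pair of explicit maps $\Phi \colon \Mcal_1^\star \to \mathcal E$ and $\Psi \colon \mathcal E \to \Mcal_1^\star$ and verify that they are mutual inverses. Given $\mu \in \Mcal_1^\star$, I would set $\phi_\mu := \d(\mu)_1/\d\P_0$ (which exists by the absolute continuity demanded in \eqref{relevantmeasures}) and disintegrate $\mu$ with respect to its first marginal to define the kernel $\tilde\pi_\mu(\omega,e) := \d\mu(\omega,e)/\d(\mu)_1(\omega)$. Since $\mathcal U_d$ is finite the disintegration is elementary, and the zero--nonzero pattern \eqref{pitilde} is read off directly from the positivity condition built into the definition of $\Mcal_1^\star$. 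Conversely, given $(\tilde\pi,\phi) \in \mathcal E$, I would set
$$
\d \Psi(\tilde\pi,\phi)(\omega,e) := \tilde\pi(\omega,e)\,\phi(\omega)\,\d\P_0(\omega),
$$
which is a probability on $\Omega_0 \times \mathcal U_d$ since $\sum_e \tilde\pi(\cdot,e) \equiv 1$ and $\int\phi\,\d\P_0 = 1$.

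Next I would verify the two memberships. For $\Phi(\mu)$ the nontrivial point is that $\phi_\mu\,\d\P_0$ is $R_{\tilde\pi_\mu}$--invariant, while for $\Psi(\tilde\pi,\phi) =: \mu$ the nontrivial point is $(\mu)_1 = (\mu)_2$. Both reduce to the same identity, which is the heart of the lemma: since $(\mu)_2$ is by construction the pushforward of $\mu$ under $(\omega,e) \mapsto \tau_e\omega$, for any bounded measurable $g$ on $\Omega$ one has
$$
\int g\,\d(\mu)_2 \;=\; \sum_{e \in \mathcal U_d} \int g(\tau_e\omega)\,\tilde\pi(\omega,e)\,\phi(\omega)\,\d\P_0(\omega) \;=\; \int (R_{\tilde\pi}g)\,\phi\,\d\P_0,
$$
and this equals $\int g\,\d(\mu)_1 = \int g\,\phi\,\d\P_0$ if and only if \eqref{invdensity_g} holds. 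The support constraint in $\widetilde\Pi$ forces $\tilde\pi(\omega,e) > 0 \Rightarrow \omega_e = 1$, so whenever the kernel charges a step $e$ one has $\tau_e\omega \in \Omega_0$; this guarantees that $(\mu)_2$ is automatically supported on $\Omega_0$ and the comparison with $(\mu)_1$ makes sense there.

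The inversions $\Psi \circ \Phi = \mathrm{id}$ and $\Phi \circ \Psi = \mathrm{id}$ will then be tautological: from $\mu$ one recovers it via its first marginal density $\phi_\mu$ and its disintegration kernel $\tilde\pi_\mu$, while from $(\tilde\pi,\phi)$ one recovers the first marginal of $\Psi(\tilde\pi,\phi)$ as $\phi\,\d\P_0$ and the disintegration as $\tilde\pi$. The main obstacle --- although conceptually minor --- will be the bookkeeping in the displayed identity above, precisely because $\P_0$ is \emph{not} invariant under the shifts $\tau_e$, a fact emphasized elsewhere in the paper as a central source of difficulty. Concretely, I would carry out the change of variables $\omega' = \tau_e\omega$ by writing $\P_0 = \P(\Omega_0)^{-1}\,\1_{\Omega_0}\,\d\P$, exploiting the $\tau_e$--invariance of $\P$, and absorbing the stray factor $\1_{\Omega_0}(\tau_{-e}\omega)$ via the convention $\phi \equiv 0$ off $\Omega_0$ together with the zero pattern of $\tilde\pi$. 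This reconciles the pointwise invariance equation \eqref{invdensity} with the measure-theoretic identity $(\mu)_1 = (\mu)_2$ imposed in the definition of $\Mcal_1^\star$ and completes the correspondence.
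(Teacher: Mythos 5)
Your proposal is correct and follows essentially the same route as the paper: the same explicit maps $\mu \mapsto (\d\mu/\d(\mu)_1,\, \d(\mu)_1/\d\P_0)$ and $(\tilde\pi,\phi)\mapsto \tilde\pi\,\phi\,\d\P_0$, with the identity $\int g\,\d(\mu)_2 = \int (R_{\tilde\pi}g)\,\phi\,\d\P_0$ doing the work of equating $(\mu)_1=(\mu)_2$ with $\tilde\pi$-invariance. Your treatment is in fact slightly more careful than the paper's about the change of variables under the non-$\P_0$-preserving shifts, which the paper leaves implicit.
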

\begin{proof}
Given arbitrarily $(\tilde\pi, \phi)\in \mathcal E$, we take
\begin{equation}\label{themap}
\begin{aligned}
\d \mu(\omega,e)= \tilde\pi(\omega,e) \phi(\omega) \,\d \P_0
 = \tilde\pi(\omega,e)  \bigg(\sum_{ \tau_e\omega^\prime=\omega} \tilde\pi(\omega^\prime,e) \phi(\omega^\prime)\bigg) \, \d \P_0.
\end{aligned}
\end{equation}
By \eqref{marginals}, $\P_0$-almost surely, 
$$
\begin{aligned}
\d(\mu)_1(\omega) =\sum_{e\in \mathcal U_d} \d \mu(\omega,e)
= \sum_{ \tau_e\omega^\prime=\omega} \tilde\pi(\omega^\prime,e) \phi(\omega^\prime) \, \d \P_0
%&= \big(\sum_e \tilde \pi( \tau_{-e}\omega, e) \big) \, \big( \sum_{e: \tau_e\omega^\prime=\omega} \tilde\pi(\omega^\prime,e) \phi(\omega^\prime)\big)\\
= \sum_{ \tau_e\omega^\prime=\omega}\d \mu(\omega^\prime,e)
=\d (\mu)_2(\omega). %\, \d\P_0,
\end{aligned}
$$
Hence, $(\mu)_1=(\mu)_2\ll \P_0$. Furthermore, if the edge $0\leftrightarrow e$ is present in the configuration $\omega$ (i.e., $\omega(e)=1$), then $\pi_\omega(0,e)>0$, and by 
our requirement  \eqref{pitilde},
$$
\frac{\d\mu(\omega,e)}{\d(\mu_1)(\omega)} = \tilde \pi(\omega,e)>0,
$$
Hence $\mu \in \mathcal M_1^\star$. Conversely, given arbitrarily  $\mu \in \mathcal M_1^\star$, we can choose
$(\tilde \pi, \phi)= (\frac {\d \mu}{\d (\mu)_1}, \frac {\d (\mu)_1}{\d \P_0})$ and readily check that $(\tilde \pi, \phi) \in \mathcal E$.
\end{proof}
We now state and prove the following ergodic theorem for the environment Markov chain under every transition kernel $\tilde\pi\in\widetilde\Pi$.
Theorem \ref{ergodicthm} is an extension of a similar statement (see Kozlov \cite{K85}, Papanicolau-Varadhan \cite{PV81}) that holds for elliptic transition kernels $\widetilde\pi(\cdot,e)$ to the non-elliptic case.
%Although this result is standard, one care needs to be taken about zero transition probabilities. % under $\P_0$.
%This result is standard in the elliptic case. %comment by K : It might be better to suggest references  
\begin{theorem}\label{ergodicthm}
Fix $\tilde\pi\in\widetilde\Pi$. If there exists a probability measure $\mathbb Q \ll \P_0$ which is $\tilde\pi$-invariant, then the following three implications hold:
\begin{itemize}
\item $\mathbb Q\sim \P_0$.
\item $\mathbb Q$ is ergodic for the environment Markov chain with transition kernel $\tilde\pi$.% is stationary and ergodic for $\P_0$. 
\item There can be at most one such measure $\mathbb Q$.
\end{itemize}
In particular, every $\widetilde\pi$-invariant set of environments will have $\P_0$-measure $0$ or $1$, as $\mathbb Q\sim\P_0$, and $\mathbb Q$ is ergodic. %Moreover, there is at most one probability measure $\mathbb Q$ which is $\tilde\pi$- invariant probability and is absolutely continuous with respect to $\P_0$.
\end{theorem}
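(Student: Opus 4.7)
The plan is to derive all three assertions from a single propagation mechanism: the $\tilde\pi$-invariance of a measurable object (the density $\phi=\d\mathbb Q/\d\P_0$, the indicator of an invariant set, or a Radon--Nikodym derivative between two invariant measures) spreads along open edges of $\mathcal C_\infty$, and the $\P$-a.s.\ uniqueness of the infinite cluster (Assumption~1) combined with the ergodicity of every translation $\tau_x$ on $(\Omega,\P)$ (Assumption~2) upgrades this local propagation to triviality modulo $\P$.

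To prove $\mathbb Q\sim\P_0$, I set $\phi=\d\mathbb Q/\d\P_0$ and $B=\{\phi>0\}\subset\Omega_0$, and apply \eqref{invdensity_g} with $g=\1_{B^c}$. Since $\int\1_{B^c}\,\phi\,\d\P_0=0$, the invariance identity forces $\int R_{\tilde\pi}\1_{B^c}\cdot\phi\,\d\P_0=0$, hence $R_{\tilde\pi}\1_{B^c}=0$ $\P_0$-a.e.\ on $B$. Writing $R_{\tilde\pi}\1_{B^c}(\omega)=\sum_e\tilde\pi(\omega,e)\1_{B^c}(\tau_e\omega)$ as a nonnegative sum and invoking \eqref{pitilde}, which identifies positivity of $\tilde\pi(\omega,e)$ with the edge $\{0,e\}$ being open at $\omega$, it follows that $\tau_e\omega\in B$ for every edge $\{0,e\}$ open at $\omega\in B$. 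Iterating along the connected cluster yields $\tau_x\omega\in B$ for every $x\in\mathcal C_\infty(\omega)$. The translation-invariant saturation $\widetilde B:=\{\omega\in\Omega:\tau_x\omega\in B\text{ for some }x\in\Z^d\}$ contains $B$, so it has positive $\P$-measure, and Assumption~2 lifts this to $\P(\widetilde B)=1$. Conversely, for $\omega\in\Omega_0\cap\widetilde B$ with $\tau_x\omega\in B\subset\Omega_0$, one has $-x\in\mathcal C_\infty(\tau_x\omega)$ (because $0\in\mathcal C_\infty(\omega)$), so the forward propagation applied at $\tau_x\omega\in B$ returns $\omega=\tau_{-x}(\tau_x\omega)\in B$. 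Therefore $\P_0(B)=1$.

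Ergodicity of $\mathbb Q$ follows from the identical mechanism: for an $R_{\tilde\pi}$-invariant set $A$, the identity $\1_A(\omega)=\sum_e\tilde\pi(\omega,e)\1_A(\tau_e\omega)$ is a convex combination with $\{0,1\}$-valued summands, forcing $\1_A(\tau_e\omega)=\1_A(\omega)$ whenever $\tilde\pi(\omega,e)>0$; the same saturation-and-ergodicity argument renders the corresponding extension $\widetilde A$ $\P$-trivial, so $\mathbb Q(A)\in\{0,1\}$. Uniqueness is then obtained by a standard ergodic-theoretic argument: given $\tilde\pi$-invariant $\mathbb Q_1,\mathbb Q_2\ll\P_0$, their mixture $\tfrac12(\mathbb Q_1+\mathbb Q_2)$ is again $\tilde\pi$-invariant and $\ll\P_0$, hence ergodic by the first two steps. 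Applying Birkhoff's pointwise ergodic theorem to bounded observables under this mixture and under each $\mathbb Q_i$, which is absolutely continuous with respect to the mixture, gives $\int f\,\d\mathbb Q_1=\int f\,\d\mathbb Q_2$ for every bounded measurable $f$, so $\mathbb Q_1=\mathbb Q_2$.

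The crux is the local propagation step: without condition \eqref{pitilde} the non-elliptic kernel $\tilde\pi$ could trap mass of $\phi$ on a $\tau$-non-invariant portion of $\mathcal C_\infty$, and no extension to a translation-invariant event on $\Omega$ would be available. The compatibility \eqref{pitilde} aligns the zero set of $\tilde\pi$ exactly with the closed edges, and the $\P$-a.s.\ uniqueness and connectedness of $\mathcal C_\infty$ (Assumption~1) together with the ergodicity of the lattice translations under $\P$ (Assumption~2) then close the argument.
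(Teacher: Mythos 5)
Your proposal is correct, and for the first two assertions it takes a genuinely different route from the paper. The paper's Step 1 performs only a \emph{one-step} propagation: it uses \eqref{pitilde} to transfer the invariance of $A=\{\d\mathbb Q/\d\P_0>0\}$ from the kernel $\tilde\pi$ to the SRW kernel $\pi$, and then concludes $\P_0(A)\in\{0,1\}$ by citing the known $\pi$-ergodicity of $\P_0$ (\cite[Proposition 3.5]{BB07}); Step 2 then deduces ergodicity of $\mathbb Q$ abstractly by applying Step 1 to the conditioned measure $\mathbb Q(\cdot\,|\,D)$. You instead prove the required zero--one law from scratch: you propagate $B=\{\phi>0\}$ along every open path of $\mathcal C_\infty$, pass to the translation-invariant saturation $\widetilde B$, and invoke Assumption 2 for $\P(\widetilde B)=1$ and Assumption 1 (uniqueness of the cluster) to pull the conclusion back to $\P_0(B)=1$. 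The paper's argument is shorter, but yours is self-contained and applies verbatim to every model satisfying Assumptions 1--2, whereas the cited BB07 proposition is stated for Bernoulli percolation and would itself be proved by essentially your saturation argument; this is a genuine advantage given the correlated models treated here. Two points you leave implicit but should record: iterating the a.e.\ propagation along the cluster (and applying it at $\tau_x\omega$ in the converse direction) requires the standard null-set bookkeeping via countable subadditivity over $x\in\Z^d$ and translation invariance of $\P$; and the reduction of ergodicity of the path-space shift to triviality of sets with $R_{\tilde\pi}\1_A=\1_A$ is the classical harmonic-function characterization, which the paper also uses without comment. Your uniqueness argument via the ergodic mixture $\tfrac12(\mathbb Q_1+\mathbb Q_2)$ is a clean, standard variant of the paper's Step 3, which instead applies Birkhoff directly on trajectory space and identifies the limit of the ergodic averages $\P_0$-a.s.
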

\begin{proof}
%Before we prove this theorem, let us quickly collect an important property for 
We fix $\tilde\pi\in \widetilde\Pi$ and let $\mathbb Q\ll \P_0$ be $\tilde \pi$- invariant. We prove the theorem in three steps.

\noindent {\bf{Step 1:}} We will first show that, $\frac {\d \mathbb Q} {\d \P_0}>0$ $\P_0$- almost surely. This will imply that $\mathbb Q\sim \P_0$. 

Indeed, to the contrary, let us assume that, $0< \P_0(A) <1$ where $A= \big\{\omega\colon \frac {\d \mathbb Q} {\d \P_0}(\omega)>0\big\}$. Then, $\mathbb Q\sim \P_0(\cdot| A)$. 
If we sample $\omega_1 \in \Omega_0$ according to $\mathbb Q$ and $\omega_2$ according to $\tilde\pi(\omega_1,\cdot)$, %A typo corrected by K 
then %({{CLEAN UP THE NEXT SENTENCE?}})
the distribution of $\omega_2$ is absolutely continuous with respect to $\mathbb Q$ (recall $\mathbb Q$ is $\tilde\pi$ invariant)  and thus, on $A^c$, the distribution of $\omega_2$ has zero measure.

This implies that, for almost every $\omega_1\in A$ and every $e\in \mathcal U_d$ such that $\tilde \pi(\omega_1,e)>0$, $\tau_e \omega_1 \in A$. Since $\tilde \pi \in\widetilde \Pi$, 
for almost every $\omega_1\in A$ and every $e\in \mathcal U_d$ such that $\pi(\omega_1,e)>0$, $\tau_e \omega_1 \in A$.
Now if we sample $\omega_1$ according to $\P_0(\cdot |A)$ and $\omega_2$ according to $\pi(\omega_1,\cdot)$, then, with
probability $1$, $\omega_2\in A$. In other words, $A$ is invariant under $\pi$ (more precisely, $A$ is invariant under the Markov kernel $R_\pi$). Since $\P_0$ is $\pi$-ergodic (see \cite[Proposition 3.5]{BB07}), $\P_0(A)\in \{0,1\}$. By our assumption, $\P_0(A)=1$.

\noindent {\bf{Step 2:}} Now we prove that the environment Markov chain with initial law $\mathbb Q$ and transition kernel $\tilde\pi$ is $\P_0$ ergodic. Let us assume on the contrary, that for some measurable $D$, $\mathbb Q(D)>0$, $\mathbb Q(D^c)>0$ and $D$ is $\tilde \pi$ invariant.
Hence $\P_0(D)>0$ and $\P_0(D^c)>0,$ 
%\textcolor{blue}{
by $\mathbb Q \sim \P_0$.%} %corrected by K ; %by the last step. 
Further, the conditional measure $\mathbb Q_D(\cdot)= \mathbb Q(\cdot| D)$ is $\tilde\pi$ invariant and $\mathbb Q_D\ll \P_0$.
But $\mathbb Q_D(D^c)=0$ and hence, $\frac{\d\mathbb Q_D}{\d \P_0}(D^c)=0$. This contradicts the first step.  

\noindent {\bf{Step 3:}} We finally prove uniqueness of every $\mathbb Q$ which is $\tilde\pi$- invariant and absolutely continuous with
respect to $\P_0$. Let $\Omega^{\Z}$ be the space of the trajectories $(\dots,\omega_{-1},\omega_0,\omega_1,\dots)$ of the environment 
chain, $\mu_{\mathbb Q}$ the measure associated to the transition kernel $\tilde \pi$ whose finite dimensional distributions are given by
$$
\mu_\mathbb Q\big((\omega_{-n},\dots,\omega_n) \in A\big)= \int_A \mathbb Q(\d \omega_{-n}) \prod_{j=-n}^{n-1}\tilde\pi\big(\omega_j, \d \omega_{j+1}\big).
$$
for every finite dimensional cylinder set $A$ in $\Omega^\Z$. Let $T: \Omega^\Z \longrightarrow \Omega^\Z$ be the shift given by $(T\omega)_n= \omega_{n+1}$ for all $n\in \Z$. 
Since $\mathbb Q$ is $\tilde\pi$- invariant and ergodic, by Birkhoff's theorem,
$$
\lim_{n\to\infty} \frac{1}{n} \sum_{k=0}^{n-1} g \circ T^k = \int g \d \mu_\mathbb Q, %A typo corrected by K ;  \frac{1}{n} added. 
$$
$\mu_\mathbb Q$ (and hence $\mu_{\P_0}$) almost surely for every bounded and measurable $g$ on $\Omega^\Z$. Since the environment chain $(\tau_{X_k}\omega)_{k\geq 0}$ has the same law w.r.t. $\int P^{\tilde\pi,\omega}_0 \d \mathbb Q$ as $(\omega_0,\omega_1,\dots)$ has w.r.t. $\mu_\mathbb Q$, if $f(\omega_0)= g(\omega_0,\omega_1,\dots)$, then 
$$
\lim_{n\to\infty} \frac 1 n\sum_{k=0}^{n-1} f \circ \tau_{X_k}= \lim_{n\to\infty} \frac 1 n\sum_{k=0}^{n-1} g \circ T^k = \int g \d \mu_\mathbb Q= \int f \d \mathbb Q,
$$
for every bounded and measurable $f$ on $\Omega$. %comment by K : not on $\Omega_0$?????????? 
The uniqueness of $\mathbb Q$ follows.
%WRITE DOWN. We omit the proof of uniqueness of $\mathbb Q$ as this is standard.
\end{proof}

\begin{cor}\label{ergcor}
For every pair $(\tilde\pi,\phi)\in \mathcal E$ and every continuous and bounded function $f: \Omega_0 \times  \mathcal U_d \rightarrow \R$, 
$$
%\textcolor{blue}
\lim_{n\to\infty} \frac 1n \sum_{k=0}^{n-1} f(\tau_{X_k}\omega, X_{k+1}-X_k)= \int_{\Omega_0} \, \d\P_0\, \phi(\omega) \sum_e f(\omega,e) \tilde\pi(\omega,e),  \ \ \mbox{$\P_0 \times P^{\tilde \pi, \omega}_{0}$-a.s. }
$$
%\textcolor{blue}{This must be a statement for an annealed limit theorem; corrected by K;  comment : $\dfrac{1}{n} \sum_{k=0}^{n-1} f(\tau_{X_k}\omega, X_{k+1} - X_{k})$ has randomness of environment $\omega$ and SRWPC $(X_n)_n$.  The RHS is a constant. $\P_0 \times P^{\tilde \pi, \omega}_{0}$-a.s. (i.e. annealed sense)} 
\end{cor}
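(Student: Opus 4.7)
The plan is to lift Theorem~\ref{ergodicthm} from the single environment chain $(\tau_{X_k}\omega)_{k\ge 0}$ to the enlarged pair chain $Z_k := (\tau_{X_k}\omega,\, X_{k+1}-X_k)$ taking values in $\Omega_0\times \mathcal U_d$. Under $\P_0\times P^{\tilde\pi,\omega}_0$ the process $(Z_k)_{k\ge 0}$ is Markov with transition kernel
\begin{equation*}
\widetilde R\big((\omega,e),\, (d\omega',de')\big) \;=\; \delta_{\tau_e\omega}(d\omega')\, \tilde\pi(\omega',e'),
\end{equation*}
and initial law $\nu_0(d\omega,de) := \tilde\pi(\omega,e)\, d\P_0(\omega)$. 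Setting $\mathbb Q := \phi\, d\P_0$, Theorem~\ref{ergodicthm} applied to $(\tilde\pi,\phi)\in \mathcal E$ yields $\mathbb Q\sim \P_0$ (in particular $\phi>0$ $\P_0$-a.s.) and ergodicity of $\mathbb Q$ for the environment chain with kernel $R_{\tilde\pi}$.

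First I would verify that $\nu(d\omega,de) := \phi(\omega)\tilde\pi(\omega,e)\, d\P_0(\omega)$ is invariant for $\widetilde R$: for a bounded measurable $h$ on $\Omega_0\times \mathcal U_d$, unfolding $\int (\widetilde R h)\, d\nu$ reduces to $\int_{\Omega_0}\phi(\omega)(R_{\tilde\pi}H)(\omega)\, d\P_0(\omega)$ with $H(\omega') := \sum_{e'}\tilde\pi(\omega',e')h(\omega',e')$, which by the $\tilde\pi$-invariance of $\phi\, d\P_0$ (equation~\eqref{invdensity_g} applied to $g=H$) equals $\int_{\Omega_0}\phi(\omega)H(\omega)\, d\P_0(\omega) = \int h\, d\nu$. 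Next I would show that $\widetilde R$ is ergodic with respect to $\nu$: if $g$ is bounded with $\widetilde R g = g$ $\nu$-a.s., then since $(\widetilde R g)(\omega,e) = \sum_{e'}\tilde\pi(\tau_e\omega,e')\,g(\tau_e\omega,e')$ depends on $(\omega,e)$ only through $\tau_e\omega$, one necessarily has $g(\omega,e) = h(\tau_e\omega)$ with $h(\omega') := \sum_{e'}\tilde\pi(\omega',e')g(\omega',e')$. Substituting back gives $h = R_{\tilde\pi}h$, so by Theorem~\ref{ergodicthm} $h$ is $\mathbb Q$-a.s.\ constant, whence $g$ is $\nu$-a.s.\ constant.

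Finally, Birkhoff's ergodic theorem applied to the ergodic Markov chain $(Z_k)$ with invariant law $\nu$ yields
\begin{equation*}
\lim_{n\to\infty}\frac{1}{n}\sum_{k=0}^{n-1} f(Z_k) \;=\; \int f\, d\nu \;=\; \int_{\Omega_0}\phi(\omega)\sum_{e\in \mathcal U_d}\tilde\pi(\omega,e)\, f(\omega,e)\, d\P_0(\omega),
\end{equation*}
for $\nu$-a.e.\ initial state and $\widetilde R$-a.s.\ along the trajectory. Since $\mathbb Q\sim \P_0$ implies $\nu\sim \nu_0$, the exceptional set has measure zero also under $\nu_0$, and the convergence transfers to a $\P_0 \times P^{\tilde\pi,\omega}_0$-a.s.\ statement, which is precisely the corollary. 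The only genuinely non-trivial step is the ergodicity of $\widetilde R$, which, however, reduces cleanly to the corresponding property for the single environment chain supplied by Theorem~\ref{ergodicthm}; I do not expect any further obstacle.
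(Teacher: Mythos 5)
Your proposal is correct and follows the same route as the paper, which simply invokes Theorem~\ref{ergodicthm} together with Birkhoff's ergodic theorem. The only difference is that you make explicit the lifting to the pair chain $Z_k=(\tau_{X_k}\omega,\,X_{k+1}-X_k)$ and verify the invariance and ergodicity of $\nu=\phi\,\tilde\pi\,\d\P_0$ for it — a detail the paper leaves implicit (its Step~3 of Theorem~\ref{ergodicthm} only treats functionals of the environment trajectory) — and this is a clean and correct way to handle the dependence of $f$ on the increment $X_{k+1}-X_k$.
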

\begin{proof}
This is an immediate consequence of Theorem \ref{ergodicthm} and Birkhoff's ergodic theorem.
\end{proof}

\subsection{Proof of lower bounds.}\label{subsec-lb}
We now prove the required lower bound \eqref{ldplb}. Its proof follows a standard change of measure argument and given Theorem \ref{ergodicthm}, although the argument is very similar to Yilmaz (\cite{Y08}),
we present this short proof for convenience of the reader and to keep the article self-contained. Recall the definition of $\mathfrak I$ from \eqref{Idef}.%for the limiting logarithmic moment generating functions. %This follows a standard change of measure argument.
\begin{lemma}[The lower bound]\label{lemmalb}
For every open set $\mathcal G$ in $\Mcal_1(\Omega_0 \times  \mathcal U_d)$, $\P_0$- almost surely,
\begin{equation}\label{eqlemmalb}
\begin{aligned}
\liminf_{n\to\infty} \frac 1n \log P^{\pi,\omega}_0 \big(\mathfrak L_n \in \mathcal G\big) &\geq - \inf_{\mu\in \mathcal G} \mathfrak I(\mu) \\
&=- \inf_{\mu\in \mathcal G} \mathfrak I^{\star\star}(\mu).
\end{aligned}
\end{equation}
\end{lemma}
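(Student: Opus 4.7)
The plan is a classical change-of-measure argument combined with the ergodic theorem from Corollary \ref{ergcor}, in the spirit of Yilmaz \cite{Y08} but with the novelty that the tilted transition kernel is non-elliptic. It suffices to establish, for each fixed $\mu\in\mathcal{G}$ with $\mathfrak{I}(\mu)<\infty$, that $\liminf_n n^{-1}\log P^{\pi,\omega}_0(\mathfrak{L}_n\in\mathcal{G}) \geq -\mathfrak{I}(\mu)$ for $\P_0$-a.e.\ $\omega$, since otherwise $\inf_\mathcal{G}\mathfrak{I}=+\infty$ and \eqref{eqlemmalb} is trivial. Finiteness of $\mathfrak{I}(\mu)$ forces $\mu\in\Mcal_1^\star$, so Lemma \ref{onetoone} assigns to $\mu$ a unique pair $(\tilde\pi,\phi)\in\mathcal{E}$, with $\tilde\pi(\omega,e)=\d\mu(\omega,e)/\d(\mu)_1(\omega)$ and $\phi=\d(\mu)_1/\d\P_0$.

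The support condition \eqref{pitilde} makes $P^{\tilde\pi,\omega}_0$ and $P^{\pi,\omega}_0$ mutually absolutely continuous on every $\sigma(X_0,\ldots,X_n)$, with
\begin{equation*}
\frac{\d P^{\pi,\omega}_0}{\d P^{\tilde\pi,\omega}_0}\bigg|_{\sigma(X_0,\ldots,X_n)} = \exp\Bigl(\sum_{k=0}^{n-1} h(\tau_{X_k}\omega,\,X_{k+1}-X_k)\Bigr),\qquad h(\omega,e):=\log\frac{\pi_\omega(0,e)}{\tilde\pi(\omega,e)}.
\end{equation*}
Rewriting $P^{\pi,\omega}_0(\mathfrak{L}_n\in\mathcal{G})$ as the corresponding $P^{\tilde\pi,\omega}_0$-expectation of the indicator times this exponential, I next apply Corollary \ref{ergcor} (to obtain the weak convergence $\mathfrak{L}_n\to\mu$ under $P^{\tilde\pi,\omega}_0$, $\P_0$-a.s., via bounded continuous test functions) together with Birkhoff's theorem for the $\tilde\pi$-ergodic chain supplied by Theorem \ref{ergodicthm}, applied to $h\in L^1(\phi\tilde\pi\,\d\P_0)$. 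This yields, $P^{\tilde\pi,\omega}_0$-almost surely,
\begin{equation*}
\frac{1}{n}\sum_{k=0}^{n-1} h(\tau_{X_k}\omega,X_{k+1}-X_k) \longrightarrow \int_{\Omega_0}\phi(\omega)\sum_{e\in\mathcal{U}_d}\tilde\pi(\omega,e)\log\frac{\pi_\omega(0,e)}{\tilde\pi(\omega,e)}\,\d\P_0 = -\mathfrak{I}(\mu).
\end{equation*}

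A standard $\varepsilon$-argument then completes the lower bound. For any $\varepsilon>0$ the $P^{\tilde\pi,\omega}_0$-probability of $\{\mathfrak{L}_n\in\mathcal{G}\}\cap\{\sum_k h(\tau_{X_k}\omega,X_{k+1}-X_k) \geq -n(\mathfrak{I}(\mu)+\varepsilon)\}$ tends to $1$, and on this event the Radon--Nikodym weight is at least $e^{-n(\mathfrak{I}(\mu)+\varepsilon)}$, whence $P^{\pi,\omega}_0(\mathfrak{L}_n\in\mathcal{G}) \geq e^{-n(\mathfrak{I}(\mu)+\varepsilon)}(1-o(1))$; taking $n^{-1}\log$, then $n\to\infty$, then $\varepsilon\downarrow 0$ produces the first inequality in \eqref{eqlemmalb}. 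The equality $\inf_\mathcal{G}\mathfrak{I}=\inf_\mathcal{G}\mathfrak{I}^{\star\star}$ is a standard consequence of convexity of $\mathfrak{I}$ and openness of $\mathcal{G}$: $\mathfrak{I}^{\star\star}\leq\mathfrak{I}$ pointwise, and any $\mu\in\mathcal{G}$ with $\mathfrak{I}^{\star\star}(\mu)<c$ admits $\nu_k\to\mu$ with $\mathfrak{I}(\nu_k)\to\mathfrak{I}^{\star\star}(\mu)$, the $\nu_k$ lying in $\mathcal{G}$ eventually.

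The principal technical point that distinguishes this from the elliptic case of \cite{Y08} is the application of Birkhoff's theorem to the tilted, non-elliptic chain, which relies on the ergodicity established in Theorem \ref{ergodicthm}. The $L^1$-integrability of $h$ is easy: $\log\pi_\omega(0,e)\in[\log(1/(2d)),0]$ whenever $\pi_\omega(0,e)>0$ (hence wherever $\tilde\pi(\omega,e)>0$, by \eqref{pitilde}), and $\int\phi\tilde\pi\,|\log\tilde\pi|\,\d\P_0$ is finite since $\mathfrak{I}(\mu)<\infty$ and $s\log s$ is bounded on $[0,1]$.
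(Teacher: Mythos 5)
Your proof is correct and follows essentially the same route as the paper: reduce to a fixed $\mu\in\Mcal_1^\star$, pass to the tilted kernel $(\tilde\pi,\phi)$ via Lemma \ref{onetoone}, and combine the change of measure with the ergodic theorem for the non-elliptic tilted chain (Theorem \ref{ergodicthm} and Corollary \ref{ergcor}). The only cosmetic difference is that the paper controls the log Radon--Nikodym term by Jensen's inequality, while you restrict to the event where the Birkhoff average is within $\varepsilon$ of its limit; both are standard and equivalent ways to conclude.
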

\begin{proof}
For the lower bound in \eqref{eqlemmalb}, it is enough to show that, for every $\mu\in \Mcal_1^\star$ and every open neighborhood $\mathcal U$ containing $\mu$,
\begin{equation}\label{lb0}
\liminf_{n\to\infty} \frac 1n \log P^{\pi,\omega}_0\big(\mathfrak L_n \in \mathcal U\big) \geq - \mathfrak I(\mu).
\end{equation}
Given $\mu\in\Mcal_1^\star$, from Lemma \ref{onetoone} we can get the pair 
\begin{equation}\label{map}
(\tilde \pi, \phi)= \bigg(\frac {\d \mu}{\d (\mu)_1}, \frac {\d (\mu)_1}{\d \P_0}\bigg) \in \mathcal E,
\end{equation}
%Then, by lemma \ref{onetoone}, $(\tilde \pi, \phi) \in \mathcal E$ and hence, 
and by Theorem \ref{ergodicthm}, 
\begin{equation}\label{lb1}
\lim_{n\to\infty} P^{\tilde\pi,\omega}_0\big(\mathfrak L_n \in \mathcal U\big) =1.
\end{equation}
Further,
$$
\begin{aligned}
P^{\pi,\omega}_0\big(\mathfrak L_n \in \mathcal U\big) &= E^{\tilde\pi,\omega}_0 \bigg\{\1_{\{\mathfrak L_n \in \mathcal U\}} \frac{\d P_0^{\pi,\omega}}{\d P_0^{\tilde\pi,\omega}}\bigg\} \\
&=\int \d P^{\tilde\pi,\omega}_0 \bigg\{\1_{\{\mathfrak L_n \in \mathcal U\}} \exp\bigg\{-\log\,\frac{\d P_0^{\tilde\pi,\omega}}{\d P_0^{\pi,\omega}}\bigg\}\bigg\} .
\end{aligned}
$$
Hence, by Jensen's inequality, 
$$
\begin{aligned}
\liminf_{n\to\infty}\frac 1n \log P^{\pi,\omega}_0\big(\mathfrak L_n \in \mathcal U\big) 
&\geq \liminf_{n\to\infty}\frac 1n \log P^{\tilde\pi,\omega}_0\big(\mathfrak L_n \in \mathcal U\big) \\
&\qquad-\limsup_{n\to\infty}\frac 1{nP^{\tilde\pi,\omega}_0\big(\mathfrak L_n \in \mathcal U\big)} \int_{\{\mathfrak L_n \in \mathcal U\}} \d P^{\tilde\pi,\omega}_0 \bigg\{\log\,\frac{\d P_0^{\tilde\pi,\omega}}{\d P_0^{\pi,\omega}}\bigg\}\\
&= -\int \d\P_0(\omega)\,\phi(\omega) \sum_{|e|=1}  \tilde\pi(\omega,e)  \log \frac{\tilde\pi(\omega,e)}{\pi_\omega(0,e)}\\
&=- \mathfrak I(\mu),
\end{aligned}
$$
where the first equality follows from \eqref{lb1} and corollary \ref{ergcor} and the second equality follows from \eqref{map}. This proves \eqref{lb0}. Finally, since 
$\mathcal G$ is open, $\inf_{\mu\in \mathcal G} \mathfrak I(\mu)= \inf_{\mu\in \mathcal G} \mathfrak I^{\star\star}(\mu)$ (see \cite{R70}). This proves the equality in \eqref{eqlemmalb} and the lemma.
\end{proof}
We now prove the lower bound for the limiting logarithmic moment generating function required for Theorem \ref{thmmomgen}.
\begin{cor}\label{corlb}
For every continuous and bounded function $f: \Omega_0 \times  \mathcal U_d\longrightarrow \R$ and for $\P_0$-almost every $\omega\in \Omega_0$, 
\begin{equation}\label{lb}
\begin{aligned}
\liminf_{n\to\infty} \frac 1n\log E^{\pi,\omega}_0 \bigg\{ \exp \bigg(\sum_{k=0}^{n-1} f\big(\tau_{X_k}\omega, X_{k+1}-X_k\big)\bigg)\bigg\} &\geq \sup_{\mu\in \Mcal_{1}^{\star}} \big\{\langle f,\mu\rangle- \mathfrak I(\mu)\big\}\\
&=\sup_{\mu\in \Mcal_{1}(\Omega_0 \times  \mathcal U_d)} \big\{\langle f,\mu\rangle- \mathfrak I(\mu)\big\}.
\end{aligned}
\end{equation}
%where $\Mcal_1^\star$ and $I(\cdot)$ are defined in \eqref{relevantmeasures} and \eqref{Idef} respectively.
\end{cor}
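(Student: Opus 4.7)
The plan is to deduce this corollary directly from the lower bound Lemma \ref{lemmalb} using the standard variational argument that relates large deviation lower bounds to logarithmic moment generating functions (essentially one direction of Varadhan's lemma). The work is already done in Lemma \ref{lemmalb}; here we merely transport the bound.

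First, I would rewrite the exponent using the empirical measure from \eqref{localtime}:
\[ \sum_{k=0}^{n-1} f\big(\tau_{X_k}\omega, X_{k+1}-X_k\big) = n \langle f, \mathfrak L_n\rangle. \]
Since $\Omega_0 \times \mathcal U_d$ is compact metrizable, $\Mcal_1(\Omega_0 \times \mathcal U_d)$ with the weak topology is also metrizable and separable, so it admits a countable base of open sets $\{U_j\}_{j \ge 1}$. Applying Lemma \ref{lemmalb} to each $U_j$ and taking the (countable) union of the associated null sets, we fix a single $\P_0$-null set $N$ outside of which the lower bound
\[ \liminf_{n\to\infty}\frac{1}{n}\log P^{\pi,\omega}_0(\mathfrak L_n \in U_j) \ge -\inf_{\nu\in U_j}\mathfrak I(\nu) \]
holds simultaneously for every $j$.

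Next, fix $\omega \notin N$, any $\mu\in \Mcal_1^\star$, and $\delta>0$. Because $f$ is continuous and bounded, the linear functional $\nu\mapsto\langle f,\nu\rangle$ is continuous on $\Mcal_1(\Omega_0\times\mathcal U_d)$, so we may choose a basic open neighborhood $U_j\ni\mu$ on which $\langle f,\nu\rangle \ge \langle f,\mu\rangle-\delta$. Restricting the expectation to the event $\{\mathfrak L_n \in U_j\}$ yields
\[ E^{\pi,\omega}_0\Big[\exp\big(n\langle f,\mathfrak L_n\rangle\big)\Big] \ge e^{n(\langle f,\mu\rangle - \delta)}\, P^{\pi,\omega}_0\big(\mathfrak L_n \in U_j\big). \]
Taking $\tfrac1n\log$, passing to $\liminf_n$, and invoking Lemma \ref{lemmalb} as above produces
\[ \liminf_{n\to\infty}\frac{1}{n}\log E^{\pi,\omega}_0\big[e^{n\langle f,\mathfrak L_n\rangle}\big] \ge \langle f,\mu\rangle - \delta - \mathfrak I(\mu). \]
Letting $\delta \downarrow 0$ along a countable sequence and then taking the supremum over $\mu\in\Mcal_1^\star$ gives the first inequality in \eqref{lb}.

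Finally, the equality of the two suprema in \eqref{lb} is immediate from the definition \eqref{Idef}: since $\mathfrak I(\mu)=+\infty$ for every $\mu\in\Mcal_1(\Omega_0\times\mathcal U_d)\setminus \Mcal_1^\star$, the contribution from outside $\Mcal_1^\star$ in the supremum is $-\infty$, so enlarging the feasible set does not change the value. There is no substantive obstacle here; the only mild technicality is choosing a single $\P_0$-null exceptional set that works for all $\mu$ simultaneously, which is handled via separability of the weak topology as above.
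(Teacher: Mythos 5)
Your proposal is correct and is essentially the paper's own argument: the authors simply cite "Varadhan's lemma and Lemma \ref{lemmalb}", and what you have written out is exactly the lower-bound half of Varadhan's lemma (restrict to a small neighborhood of $\mu$, apply the LDP lower bound, take suprema), together with the routine null-set bookkeeping via a countable base and the observation that $\mathfrak I\equiv+\infty$ off $\Mcal_1^\star$.
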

\begin{proof}
This follows immediately from Varadhan's lemma and Lemma \ref{lemmalb}.
\end{proof}
We denote the variational formula in Corollary \ref{corlb} by
\begin{equation}\label{Lf}
\begin{aligned}
\overline H(f)&=\sup_{\mu\in \Mcal_1^\star} \big\{\langle f, \mu\rangle- \mathfrak I(\mu)\big\}\\
&= \sup_{(\tilde\pi,\phi)\in \mathcal E} \bigg\{\int \d\P_0(\omega)\,\phi(\omega) \sum_{|e|=1}  \tilde\pi(\omega,e) \bigg\{f(\omega,e) - \log \frac{\tilde\pi(\omega,e)}{\pi_\omega(0,e)}\bigg\}\bigg\},
\end{aligned}
\end{equation}
and recall from Lemma \ref{onetoone} the one-to-one correspondence between elements of the set $\Mcal_1^\star$ and the pairs $\mathcal E$ (see \eqref{map}, \eqref{Idef}).
For the variational analysis that follows in Section \ref{sec-proof-ldp}, it is convenient to write down a more tractable representation 
of the above variational formula. This is based on the following observation, which was already made by Kosygina-Rezakhanlou-Varadhan (\cite{KRV06}) and used by Yilmaz (\cite{Y08}) and Rosenbluth (\cite{R06}). 
Recall that by \eqref{invdensity_g}, if $(\phi,\tilde\pi)\in \mathcal E$, then for every bounded and measurable function $g$ on $\Omega_0$,
\begin{equation}\label{thm-lbub1}
\sum_e \int \phi(\omega) \tilde\pi(\omega,e) \big(g(\omega)- g(\tau_e\omega)\big)\d\P_0(\omega) = 0.
\end{equation}
On the other hand, if $(\phi,\tilde\pi)\notin \mathcal E$, then for some bounded and measurable function $g$ on $\Omega_0$, the above integral on the left hand side is non-zero. By taking constant multiples of such a function $g$, we see that
$$
\inf_{g}\, \int \phi(\omega) \sum_{e\in\mathcal U_d}\,\,\tilde\pi(\omega,e) \big(g(\omega)-g(\tau_e\omega)\big)\,\d\P_0(\omega)=
\begin{cases}
0 \qquad{\mbox{if}} \,\,(\phi,\tilde\pi)\in \mathcal E\\
-\infty \qquad{\mbox{else.}}
\end{cases}
$$
with the infimum being taken over every bounded and measurable function $g$. Hence, we can rewrite \eqref{Lf} as 
\begin{equation}\label{thm-lbub2a}
\begin{aligned}
\overline H(f) &=\sup_{\phi}\, \sup_{\tilde\pi\in\widetilde\Pi}\,\inf_{g} \bigg[\int \d\P_0(\omega)\,\phi(\omega) \bigg\{
\sum_{e\in \mathcal U_d}  \tilde\pi(\omega,e) 
 \bigg(f(\omega,e) -\log\frac{\widetilde\pi_\omega(0,e)}{\pi_\omega(0,e)}
+\big(g(\omega)-g(\tau_e\omega)\big)\bigg)\bigg\}\bigg] 
\end{aligned}
\end{equation}

\section{Upper bound for the proof of Theorem \ref{thmmomgen}}\label{sec-classG}
We will now introduce the class of relevant gradient functions in Section \ref{subsec-classG}, derive an important property 
of these  functions in Section \ref{subsec-sublinear} and prove the desired upper bound of Theorem \ref{thmmomgen} in Section \ref{subsec-ub}.
%A class of correctors and its properties}\label{sec-classG}

\subsection{The class $\mathcal G_\infty$ of gradients and the corresponding correctors}\label{subsec-classG}

We introduce a class of functions which will play an important role for the large deviation analysis to follow.
However, before introducing this class we need the notion of the {\em induced shift} on $\Omega_0$.

%Fix $\omega\in\Omega_0$ and $e\in\mathcal U_d$. Let 
%\begin{equation}\label{def-n}
%k(\omega,e)=\inf\{k\geq 1: \tau_{ke}\,\,\omega\in\Omega_0\}.
%\end{equation}
%We would like to argue that  This is immediate from 
%\textcolor{blue}{By the Poincar\'e recurrence theorem \cite[Section 2.3]{P89}, $k(\omega,e)$ is finite $\P_0$-almost surely.}%the mixing property (i.e., the property which demands $\big|\P[x,y\in\mathcal C_\infty]-\P[x\in\mathcal C_\infty]\P[y\in\mathcal\mathcal C_\infty]\big|\to 0$ as $|x-y|\to\infty$) present in all the percolation models (recall \eqref{RI-corr}). Hence by the spatial erodicity along each of the coordinate directions and by Birkhoff's ergodic theorem, for each $e\in \mathcal U_d$, the set $\{k\geq 1: \tau_{ke}\,\,\omega\in\Omega_0\}$ has positive density in $\N$. Hence, $k(\omega,e)$ is finite $\P_0$-almost surely. 

{Recall \eqref{def-n}.}
Then the {\em induced shift} is defined as 
\begin{equation}\label{def-sigmae}
\sigma_e(\omega)=\tau_{k(\omega,e)e}\,\,\omega.
\end{equation}
It is well-known that, for every $e\in\mathcal U_d$, $\sigma_e\colon \Omega_0\rightarrow \Omega_0$ is $\P_0$-measure preserving and ergodic (\cite[Theorem 3.2]{BB07}).
Furthermore, for every $k\in \N$, we inductively set
\begin{equation}\label{def-nk}
n_1(\omega,n)=k(\omega,e) \qquad n_{k+1}(\omega,e)=n_k(\sigma_e\omega,e).
\end{equation}

Now we turn to the definition of $\mathcal G_\infty$. We say that a function $G: \Omega_0 \times  \mathcal U_d \longrightarrow \R$ is in class $\mathcal G_\infty$ 
if it satisfies the conditions \eqref{unifbound}, \eqref{closedloop} and \eqref{meanzero} listed below:
\begin{itemize}
\item {\bf{Uniform boundedness.}} For every $e\in \mathcal U_d$, 
\begin{equation}\label{unifbound}
\mathrm{ess} \sup_{\P_0} G(\cdot,e) =A< \infty. %corrected by K ; Absolute value. 
\end{equation}
\item {\bf{Closed loop on the cluster.}} Let $(x_0,\dots,x_n)$ be a closed loop on the infinite cluster $\mathcal C_\infty$ (i.e., $x_0,x_1,\dots, x_n$ is 
 a nearest neighbor occupied path so that $x_0=x_n$). Then,
 \begin{equation}\label{closedloop}
 \sum_{j=0}^{n-1} G(\tau _{x_j} \omega, x_{j+1}-x_j) =0 \quad \P_0- \mbox{almost surely}.
 \end{equation}
 For every $G\in \mathcal G_\infty$, the closed loop condition has two important consequences. First, along every nearest neighbor occupied path $(x_0, x_1, \dots, x_m)$ so that $x_0=0$ and $x_m=x$ on $\mathcal C_\infty$, 
 for every $G(\cdot,\cdot)$ that satisfies \eqref{closedloop}, we can define the {\it{corrector}} corresponding to $G$ as
  \begin{equation}\label{Psidef}
 V(\omega,x)=  V_G(\omega,x)=  \sum_{j=0}^{m-1} G(\tau _{x_j} \omega, x_{j+1}-x_j). %\quad G\in \mathcal G_\infty.
  \end{equation}
By \eqref{closedloop}, this definition is clearly independent of the chosen path for almost every $\omega\in \{x\in \mathcal C_\infty\}$. 
Also note that, for every $G$ that satisfies \eqref{closedloop}, $V=V_G$ satisfies the following 
 {\it{Shift covariance}} condition: For $\P_0$-almost every $\omega\in \Omega_0$ and all $x, y\in \mathcal C_\infty$,
\begin{equation}\label{def-shiftcov}
V(\omega,x)- V(\omega,y)= V(\tau_y\omega, x-y).
\end{equation}

\item {\bf{Zero induced mean:}} Recall the definition of $k(\omega,e)$ from \eqref{def-n} and write 
\begin{equation}\label{v_e_def}
v_e= k(\omega,e) \, e
\end{equation}
for every $\omega\in\Omega$ and $e\in \mathcal U_d$.
Let $\big\{0=x_0,x_1,\ldots,x_k=k(\omega,e)\,e\big\}$ be an $\omega$-open path from $0$ to $k(\omega,e)\,e$. 
For every $G(\cdot,\cdot)$ that satisfies \eqref{closedloop}, we again write 
\begin{equation}\label{V_ve}
V(\omega,v_e)=V_G(\omega, v_e)= \sum_{i=0}^{k-1} G(\tau_{x_i}\omega,x_{i+1}-x_i).
\end{equation}
Again, the choice of the path doesn't influence $V(\omega,v_e)$. We then say that $V=V_G$ satisfies the {\it{induced zero mean property}} by requiring that for every $e\in\mathcal U_d$,
 %$\P_0$- almost surely,
\begin{equation}\label{meanzero}
\E_0\big[V(\cdot,v_e)\big]=0.
\end{equation}
\end{itemize}

\subsection{Sub-linear growth of the correctors at infinity}\label{subsec-sublinear}

This section is devoted to the proof of the following important property of functions in the class $\mathcal G_\infty$.
 \begin{theorem}[Sub-linear growth at infinity on the cluster]\label{sublinearthm}
 For every $G\in \mathcal G_\infty$, $V=V_G$ has at most sub-linear growth at infinity on the infinite cluster $\P_0$- almost surely,. In other words,
  $$
  \lim_{n\to\infty} \max_{\heap{x\in \mathcal C_\infty}{|x|\leq n}}  \frac {|V(\omega,x)|}n =0.
  $$
  \end{theorem}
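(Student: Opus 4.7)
The shift covariance \eqref{def-shiftcov} combined with \eqref{unifbound} yields, by summing $G$ along a shortest $\omega$-open path, the Lipschitz-type estimate
\[
|V(\omega,x)-V(\omega,y)|\le A\,\d_{\textrm{ch}}(x,y) \qquad \forall\, x,y\in\mathcal C_\infty.
\]
My plan is to (i) exhibit a random ``grid'' $\{Y_m(\omega):m\in\Z^d\}\subset\mathcal C_\infty$ along which $|V(\omega,Y_m)|/|m|\to 0$, (ii) approximate an arbitrary cluster point by its nearest grid point, and (iii) absorb the error through the Lipschitz bound above and the chemical-distance estimate of Lemma \ref{chemdist}. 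The building block is the following axial sublinearity. For fixed $e\in\mathcal U_d$, set $X_n^{(e)}(\omega):=\sum_{k=0}^{n-1} v_e(\sigma_e^k\omega)$. Telescoping via \eqref{def-shiftcov} gives $V(\omega,X_n^{(e)})=\sum_{k=0}^{n-1}V(\sigma_e^k\omega,v_e)$, and $|V(\cdot,v_e)|\le A\,\d_{\textrm{ch}}(0,v_e)$ is $\P_0$-integrable thanks to the exponential tail of Lemma \ref{lemma-ell}. Since $\sigma_e$ is $\P_0$-ergodic and $\E_0[V(\cdot,v_e)]=0$ by \eqref{meanzero}, Birkhoff's theorem yields $n^{-1}V(\omega,X_n^{(e)})\to 0$ and $n^{-1}X_n^{(e)}\to \bar k_e\,e$, with $\bar k_e:=\E_0[k(\cdot,e)]\in(0,\infty)$, $\P_0$-a.s.

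For the grid, fix a coordinate order and, given $m=(m_1,\dots,m_d)\in\Z^d$, define $Y_m^{(0)}:=0$ and $Y_m^{(i)}:=Y_m^{(i-1)}+X_{|m_i|}^{(\mathrm{sgn}(m_i)e_i)}(\tau_{Y_m^{(i-1)}}\omega)$, setting $Y_m:=Y_m^{(d)}\in\mathcal C_\infty$. Each intermediate environment $\tau_{Y_m^{(i-1)}}\omega$ is an iterate of measure-preserving induced shifts and is therefore again $\P_0$-distributed; applying shift covariance coordinate-by-coordinate and invoking the axial sublinearity in each direction (with a Fubini argument to handle the nested randomness) gives $|m|^{-1}V(\omega,Y_m)\to 0$ and $|m|^{-1}Y_m\to \sum_i (m_i/|m|)\,\bar k_{\mathrm{sgn}(m_i)e_i}\,e_i$ along any fixed direction $m/|m|$, $\P_0$-a.s. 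Now fix $\varepsilon>0$, cover the unit sphere in $\R^d$ by a finite $\varepsilon$-net $\{u_j\}$, and for each $u_j$ pick an integer direction $m(j)$ whose asymptotic grid-direction approximates $u_j$. Given $x\in\mathcal C_\infty$ with $|x|\le n$, choose $u_j$ closest to $x/|x|$ and scale $m(x):=\lfloor\lambda m(j)\rfloor$ with $\lambda\asymp |x|/|m(j)|$, so that $|x-Y_{m(x)}|=O(\varepsilon n)+o(n)$. Lemma \ref{chemdist} then gives $\d_{\textrm{ch}}(x,Y_{m(x)})\le c_1|x-Y_{m(x)}|_1$ off an event of sub-polynomial probability, which is summable via Borel--Cantelli over the at most polynomially many choices of $x$. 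Combining with the Lipschitz bound and the grid sublinearity and letting $\varepsilon\downarrow 0$ yields $\max_{x\in\mathcal C_\infty,\,|x|\le n}|V(\omega,x)|/n\to 0$.

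The main obstacle lies in step (iii): upgrading the pointwise a.s.\ sublinearity given by Birkhoff to a statement uniform in $x\in\mathcal C_\infty\cap\{|x|\le n\}$. Birkhoff provides no rate, so the argument must simultaneously control (a) the LLN error $|x-Y_{m(x)}|$, reduced to finitely many typical directions via the $\varepsilon$-net and Borel--Cantelli, (b) the chemical-distance excess $\d_{\textrm{ch}}(x,Y_{m(x)})$, handled by Lemma \ref{chemdist} ensuring that, with overwhelming probability, \emph{every} cluster point in the cube is joined to its nearest grid point by a short path, and (c) the intermediate $\P_0$-distributions arising from the non-commuting induced shifts $\sigma_{e_1},\dots,\sigma_{e_d}$ when invoking axial Birkhoff at each step of the grid construction.
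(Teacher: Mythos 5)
Your first step (the axial Birkhoff argument giving $n^{-1}V(\omega,X_n^{(e)})\to 0$ from \eqref{meanzero}, the measure preservation of $\sigma_e$, and the integrability supplied by Lemma \ref{lemma-ell}) is exactly the starting point of the paper's proof (it is \eqref{eq00} there). But the passage from this one-dimensional statement to uniformity over $\mathcal C_\infty\cap[-n,n]^d$ is where your argument has a genuine gap, and you have in fact put your finger on it yourself without resolving it. The grid point $Y_m$ is reached by composing induced shifts in different coordinate directions, so the $i$-th leg of the telescoped sum is a Birkhoff sum for $\sigma_{e_i}$ \emph{started at the environment} $\sigma_{e_{i-1}}^{|m_{i-1}|}\cdots\sigma_{e_1}^{|m_1|}\omega$. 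A Fubini/measure-preservation argument only yields: for $\P_0$-a.e.\ $\omega$ and \emph{every fixed} $j$, the Birkhoff averages started at $\sigma_{e_1}^{j}\omega$ converge. Since Birkhoff's theorem comes with no rate and the induced shifts $\sigma_{e_1},\dots,\sigma_{e_d}$ do not commute (so no multiparameter ergodic theorem applies), you cannot conclude anything about the diagonal limit in which the number of steps $|m_i|\asymp\lambda$ and the index $j=|m_1|\asymp\lambda$ of the starting environment tend to infinity together. The same lack of uniformity infects your claim $|x-Y_{m(x)}|=O(\varepsilon n)+o(n)$ for \emph{all} $x$ in the box. This is precisely the well-known obstruction to upgrading axial sublinearity of a corrector to sublinearity on a box, and it is not a technicality one can wave away.

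The paper circumvents this by a different, two-scale counting argument following Berger--Biskup \cite[Theorem 5.4]{BB07}: from \eqref{eq00} (plus the FKG inequality, Lemma \ref{lemma-FKG}) one first deduces the \emph{density} statement \eqref{eq01}, that the proportion of points $x\in\mathcal C_\infty\cap[-n,n]^d$ with $|V(\omega,x)|>\eps n$ tends to $0$; then the spatial ergodic theorem (Lemma \ref{lemma-ergodic}) guarantees at least $2\eps n^d$ cluster points in every ball of radius $\delta n$ with $\delta$ as in \eqref{delta-def}, so every $x$ in the box has a ``good'' neighbour $y$ with $|V(\omega,y)|\le\eps n$ and $|x-y|<\delta n$; finally the Lipschitz bound $|V(\omega,x)-V(\omega,y)|\le A\,\d_{\mathrm{ch}}(x,y)\le A\rho\delta n$ from Lemma \ref{chemdist} (which you also use) closes the argument. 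If you want to salvage your grid construction you would need an additional idea supplying uniformity over the moving starting environments -- in effect reproving something of the strength of \eqref{eq01} -- so as written the proposal does not constitute a proof.
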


Before we present the proof of Theorem \ref{sublinearthm}, which is carried out at the end of this section, we need some important estimates related to the geometry of the 
 infinite percolation cluster $\mathcal C_\infty$ presented in the following two lemmas. Lemma \ref{chemdist} gives a precise bound on the shortest distance of two points 
 in the infinite cluster (the {\it{chemical distance}}) and Lemma \ref{lemma-ell} gives an exponential tail bound on the graph distance between the origin and the 
the first arrival  $v_e=k(\omega,e)e$ (recall \eqref{v_e_def}), 
of the cluster on the positive part of each of the coordinate directions. Both lemmas are well-known in the literature covering i.i.d. Bernoulli percolation and the site percolation model discussed in Section \ref{sec-intro-site}.
 For the random cluster model, contents of these two results are new, to the best of our knowledge. Apart from the proof of Theorem \ref{sublinearthm}, both lemmas will be helpful in carrying out our variational analysis 
 in Section \ref{sec-proof-ldp} (see the proof of Lemma \ref{lemma-last-last}).

% let us collect some useful facts. %  \begin{proof}
%  This follows verbatim the proof of Theorem 5.4 in \cite{BB07} (Interestingly, in \cite{BB07}, along with the mean zero and shift-covariance, $V$ being only square integrable with respect to $\P_0$
%  is enough to deduce the above result).
%  \end{proof}
  
%   
%We will assume Theorem \ref{averagesublinear} and first prove Theorem \ref{sublinearthm}. We will need some further estimates. 

We first turn to the following estimate %version of the classical result of Antal-Pisztora (\cite{AP96}) 
on the {\it{chemical distance}} $\d_{\mathrm{ch}}(x,y)=\d_{\mathrm{ch}}(\omega;\,x,y)$ of two points  $x,y \in \mathcal C_\infty$,  %$p>p_c(d)$ and %%corrected by K
which is defined to be the minimal length of an $\omega$-open
path connecting $x$ and $y$ in the configuration $\omega\in \Omega_0$. 
The following result, originally proved by Antal and Pisztora (\cite{AP96}) for supercritical i.i.d. Bernoulli percolation, asserts that 
the chemical distance of two points in the cluster is comparable to their Euclidean distance. %Then, (see \cite{AP96}), there exists a constant $c=c(p,d)$ so that 
  %$$
 % \limsup_{|x|\to \infty} \frac 1{|x|} \d_{\mbox {ch}}(0,x) \1_{\{x\in \mathcal C_\infty\}} < c.
%  $$
 % We state an immediate corollary of this result as a lemma.
\begin{lemma}\label{chemdist}
%\textcolor{red}{ (i)  There exist constants $c_{1}, c_{2}, c_{3} > 0$ \textcolor{red}{$\delta n/ 2 \le |x-y|< \delta n$}%such that for every $x \in \mathbb{Z}^{d}$, 
%$$ 
%\mathbb{P}(0, x \in \mathcal C_\infty, d_{ch}(0, x) \ge c_{1}|x|_{1}) \le c_{1} \exp( - c_{2}(\log |x|_{1})^{1+c_{3}}). 
%$$}
Assumption 3 holds for the percolation models introduced in Section \ref{sec-intro-bond} and Section \ref{sec-intro-site}.
In particular, let us fixe $\delta>0$. Then there exists a constant $\rho=\rho(p,d)$ such that, $\P_0$- almost surely, for every $n$ large enough and points $x,y\in \mathcal C_\infty$ with $|x|<n, |y|< n$ and $\delta n/2\leq|x-y|< \delta n$, we have
$\d_{\mathrm{ch}}(x,y) < \rho \delta n$.
\end{lemma}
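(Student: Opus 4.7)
The plan is to establish Assumption 3 for each of the models in our list and then derive the Euclidean statement of the lemma from it by a single Borel--Cantelli argument. For the deduction, fix $\delta > 0$ and set $\rho = c_1\sqrt{d}$, so that the inclusion of events
\[
\{\d_{\textrm{ch}}(x,y) \geq \rho \delta n\} \subseteq \{\d_{\textrm{ch}}(x,y) \geq c_1 |x-y|_1\}
\]
holds for all $x,y \in \Z^d$ with $|x|,|y| < n$ and $|x-y| < \delta n$ (since $|x-y|_1 \le \sqrt{d}\,|x-y|$). Translation invariance (Assumption 2) and Assumption 3 then give each such pair a failure probability at most $c_2 \exp(-c_3(\log(\delta n/2))^{1+c_4})$ on the event that both $x$ and $y$ lie in $\mathcal C_\infty$. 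A union bound over the $O(n^{2d})$ admissible pairs is summable in $n$ because $(\log n)^{1+c_4}$ eventually dominates any multiple of $\log n$; a single Borel--Cantelli application under $\P$, combined with $\P(\Omega_0) > 0$, transfers the conclusion to $\P_0$-almost sure form.

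For the individual models, Assumption 3 is largely available in the literature. Supercritical Bernoulli bond and site percolation is the classical Antal--Pisztora theorem, which gives even exponential decay. For random interlacements at every level $u$, the \v{C}ern\'y--Popov estimates furnish the required (sub-)exponential chemical-distance bound; for the vacant set of random interlacements at $u < \overline u$ and for super-level sets of the Gaussian free field at $h < \overline h$, Drewitz--R\"ath--Sapozhnikov supply exactly this decay --- this is precisely why our hypotheses restrict to these sub-regimes.

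The genuinely new case is the random cluster model with $p > \widehat p_c(q)$, and the plan there is to adapt the Antal--Pisztora renormalization scheme to the FKG-correlated setting. Partition $\Z^d$ into mesoscopic cubes of side $N$ and declare a cube \emph{good} if, inside a concentric block of a few cubes, the random cluster configuration realizes slab crossings in every direction that merge into a single seed cluster; this is an increasing event. The definition \eqref{slab-d3}--\eqref{slab-d2} of $\widehat p_c(q)$, combined with the FKG inequality (Assumption 5), forces the probability of a good cube to tend to $1$ as $N \to \infty$ whenever $p > \widehat p_c(q)$. A Liggett--Schonmann--Stacey comparison for positively associated increasing events then shows that the field of good cubes stochastically dominates a highly supercritical Bernoulli site field on the coarse lattice, and a standard Pisztora-type gluing argument --- patching the seed cluster of one good cube into that of the next through the block overlaps --- produces a chemical-distance-linear connection between any two good cubes, with exponential tail control on the bad-cube density absorbing the defects. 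Since seed clusters sit in $\mathcal C_\infty$ by the uniqueness of the infinite cluster under $p > \widehat p_c(q)$, this delivers Assumption 3 with exponential decay.

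The main obstacle is precisely this last step: the intrinsic dependence of the random cluster measure forbids any direct use of independence, so the FKG inequality and the slab characterization of $\widehat p_c(q)$ must do all the work of independence at every stage --- when extracting a high crossing probability in a single cube, when comparing the coarse good-cube field to Bernoulli percolation, and when controlling bad-region tails on the renormalized lattice. In $d \geq 3$ the restriction to the \emph{free} boundary random cluster measure will be imposed because the slab density arguments required subsequently for Assumption 4 are known only for that extremal measure.
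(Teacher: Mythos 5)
Your overall architecture coincides with the paper's: the Euclidean form of the lemma is deduced from Assumption 3 by a union bound over the $O(n^{2d})$ admissible pairs followed by Borel--Cantelli (the $(\log n)^{1+c_4}$ decay beats the polynomial entropy, and $\P_0\le \P/\P(\Omega_0)$ transfers the almost-sure statement); Bernoulli bond and site percolation are handled by Antal--Pisztora \cite{AP96}; and the correlated site models are covered by \cite{CP12} and \cite{DRS14} exactly as you say. For the random cluster model the paper likewise runs an Antal--Pisztora-type renormalization combined with a Liggett--Schonmann--Stacey domination, so your strategy is the right one. Two of your justifications in that case, however, would not go through as written.

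First, the assertion that the definition of $\widehat p_c(q)$ together with FKG ``forces the probability of a good cube to tend to $1$'' is precisely the content of Pisztora's coarse-graining theorem (\cite[Theorem 3.1]{P96} for $d\ge 3$, \cite[Theorem 9]{CM04} for $d=2$), which yields the stronger statement that the bad-block probability is at most $c_1'\e^{-c_2'N}$ \emph{uniformly over boundary conditions}; this is a substantial theorem and not a short consequence of the slab definition plus positive association, so it must be cited or reproved. Second, and more seriously, Liggett--Schonmann--Stacey is not a domination theorem ``for positively associated increasing events'': positive association of a $\{0,1\}$-valued field does not imply stochastic domination of any nontrivial product measure (the field that is identically $1$ or identically $0$ with probability $1/2$ each is positively associated but dominates no Bernoulli field of positive density). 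The hypothesis of \cite[Theorem 1.3]{LSS97} is a uniform lower bound on the conditional probability that a block is good given the states of all blocks at $\ell^\infty$-distance at least $2$; the paper obtains this from the uniform-over-boundary-conditions block estimate together with the DLR property of the random-cluster measure. Once these two inputs are restored, the remainder of your sketch --- gluing seed clusters across good blocks and absorbing the bad-block defects to get the exponential bound on $\P_0\{\d_{\mathrm{ch}}(x,y)>\rho|x-y|,\ x,y\in\mathcal C_\infty\}$ --- is exactly the paper's repetition of the argument of \cite[p.~1047]{AP96}.
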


%\textcolor{red}{(i) corresponds to \cite[Assumption 1.1(iii)]{O13}.}  
  \begin{proof}  
%\textcolor{red}{Comments by K; The proof is under construction. (ii) seems an immediate consequence of (i), but not yet checked. do we have to refer  \cite[Lemma 2.14]{DGK01}?}  
  
 For i.i.d. Bernoulli bond and site percolation model (recall Section \ref{sec-intro-bond}), the statement of this lemma
follows from the classical estimate of Antal-Pisztora (Theorem 1.1, \cite{AP96})). 
 
% For random interlacement and level sets of Gaussian free fields, this follows from \cite{DRS14}. 

%\textcolor{red}{For the site percolation models (i.e., random interlacements, its vacant set and the level sets of the Gaussian free field) introduced in Section \ref{sec-intro-site}, Lemma \ref{chemdist} 
%follows from the estimate 
%$$
%\P_0\bigg\{\d_{\mathrm{ch}}(x, y) \geq \rho\, |x-y| \,x, y \in \mathcal C_\infty\bigg\}\,\leq c_{1} \,\,\e^{ - c_1 \,\,(\log |x-y|)^{1+c_{2}}}.
%$$
% for constants $c_1,c_2>0$ and every $x\in\Z^d$. 
% This estimate (see Theorem 1.3, \cite{DRS14}) and its proof can be found in \cite{DRS14}.}
 
%  \section{The random-cluster model satisfies assumptions}
%K   

%\textcolor{green}{We need to make sure that this was not done elsewhere before. I read the proof, and modulo checking the exact statements in the given references, it is correct. N.}

We now prove the lemma for the supercritical random-cluster model. Recall that we 
assume that $p > \widehat{p}_{c}(q)$. %(Cf. \cite[Proposition 2.2]{O13}.)  %(also introduced in Section \ref{sec-intro-bond}). 
%K   
%We follow the strategy taken in the proof of \cite[Theorem 1.1]{AP96}. 
%The key point is to show the random-cluster version of \cite[(2.14)]{AP96}. 
%\textcolor{red}{.}  
 For every $r\geq 0$, we define a box $B_{0}(r) := [-r, r]^{d}$ and set, for any $z \in\Z^d$, $N\in\N$, 
$$
B_{z}(N) = \tau_{(2N+1)z} B_{0}(5N/4) %\qquad B_{z}^{\prime}(N) := \tau_{(2N+1)z} B_{0}(5N/4).
$$ 
Here $\tau_{z}$ is the transformation on $\mathbb{Z}^{d}$ defined by $\tau_{z}(x) = z + x$.
We define $R_{z}^{(N)}$ to be the event in $\{0,1\}^{\mathbb B_d}$ satisfying the following three conditions:  %(i) - (iii) : \\
\begin{itemize}
\item There exists a {\it{unique crossing open cluster}} for $B_{z}(N)$. In other words,  
there is a connected subset $\mathcal{C}$ of an open cluster such that it is contained in $B_{z}(N)$, and, 
for all $d$ directions there is a path in $\mathcal{C}$ connecting the left face and the right face of $B_{z}(N)$. 
\item  The cluster in the above requirement intersects all boxes with diameter larger than $N/10$.
\item All open clusters with diameter larger than $N/10$ are connected in $B_{z}(N)$. 
\end{itemize}
Recall the measures $\mathbb{P}_{\Lambda, p,q}^{\ssup \xi}$ and $\mathbb{P}_{p,q}^{\ssup b}$ corresponding to the random cluster model. %from \eqref{def-P-Lambda-pqb}-\eqref{def-P-pqb}. 
Then, under the map
$$
\phi_{N} : \{0,1\}^{\mathbb B_d} \to \{0,1\}^{\mathbb{Z}^d} \qquad (\phi_{N}\omega)_{z} = \1_{R_{z}^{(N)}}(\omega)\quad\forall\, z \in \mathbb{Z}^{d},
$$
we let 
$$
\mathbb{P}_{p,q,N}^{\ssup b}= \mathbb{P}_{p,q}^{\ssup b} \, \circ \,  \phi_N^{-1}
$$
to be the image measure of $\mathbb{P}_{p,q}^{\ssup b}$. 
By \cite[Theorem 3.1]{P96} for $d \ge 3$ and \cite[Theorem 9]{CM04} for $d = 2$, 
we see that  
there exist constants $c_{1}^{\prime}, c_{2}^{\prime} > 0$ (depending only on $d$, $p$ and $q$), 
such that for every $N \ge 1$ and $i \in \mathbb{Z}^{d}$,   
$$
\sup_{\xi \in \Omega}\, \mathbb{P}_{B_{z}(N), p, q}^{\ssup \xi}\,\, \bigg[ \big(R_{z}^{(N)}\big)^{c}\bigg]
\le c_{1}^{\prime}\,\, \e^{-c_{2}^{\prime} N}. 
$$

%This inequality corresponds to  \cite[(2.24)]{AP96}. 
Let $Y_{z} : \{0,1\}^{\mathbb{Z}^d} \to \{0,1\}$ be the projection mapping to the coordinate $z \in \mathbb{Z}^{d}$.  
By using the DLR property for the random-cluster model (\cite[Section 4.4]{G06}), for both boundary conditions $b$,
$$
 \lim_{N \to \infty}  \sup_{z \in \mathbb{Z}^{d}} 
\mathrm{ ess.sup }\, \, 
\mathbb{P}_{p,q,N}^{\ssup b}\,\, \bigg[Y_{z} = 0  \bigg | \sigma\bigg(Y_{x} :  |x-z|_{\infty} \ge 2\bigg)\bigg] 
= 0. 
$$
By using \cite[Theorem 1.3]{LSS97},
we see that there exists a function $\overline{p}(\cdot): \N\to [0,1)$  
such that $\overline{p}(N) \to 1$ as $N \to \infty$
and the Bernoulli product measure 
$$
\mathbb{P}^{\star}_{\overline{p}(N)}=\big(\overline p(N)\,\,\delta_1\,\,+\,\,\big(1-\overline p(N)\,\big)\delta_0\big)^{\Z^d}
$$ 
on $\{0,1\}^{\mathbb{Z}^d}$ with parameter $\overline p(N)$ is dominated by $\mathbb{P}_{p, q, N}^{\ssup b}$ for each $N$, i.e., 
for every increasing event $A$, 
$$
\mathbb{P}^{\star}_{\overline{p}(N)}(A) \leq \mathbb{P}_{p, q, N}^{\ssup b}(A).
$$
%This key estimate now enable us to follow the same line of arguments as in \cite{AP96} to deduce that, for some positive integer $k\leq |x-y|$, and some constant $c=c(p,q,d)$
%\begin{equation}\label{Chebyshev}
%\begin{aligned}
%\P_0\bigg\{\d_{\mathrm{ch}}(x,y)> \rho |x-y|\,\,x, y \in \mathcal C_\infty\bigg\} &\leq \mathbb{P}_{p, q, N}^{\ssup b} \bigg\{\frac 1 {k+1}\sum_{i=0}^k \big(C_i^\star+1\big) > \rho\, |x-y|\,\,\frac{c\,N^{-d}}{k+1}\bigg\}\\
%&\leq \mathbb{P}^{\star}_{\overline{p}(N)} \bigg\{\frac 1 {k+1}\sum_{i=0}^k \big(C_i^\star+1\big) > \rho\, |x-y|\,\,\frac{c\,N^{-d}}{k+1}\bigg\}
%\end{aligned}
%\end{equation}
 
Given the above estimate, we can now repeat the arguments in (p. 1047, \cite{AP96}) to conclude that

\begin{equation}\label{est-chemdist}
\P_0\bigg\{\d_{\mathrm{ch}}(x,y)> \rho |x-y|, \ x, y \in \mathcal C_\infty\bigg\} \leq \e^{-c|x-y|}.
\end{equation}
For some suitably chosen $\rho>0$. Borel-Cantelli lemma and our assumption that $|x-y|\geq \delta n/2$ now conclude the proof of Lemma \ref{chemdist} for random cluster models. 

For the site percolation models (i.e., random interlacements, its vacant set and the level sets of the Gaussian free field) introduced in Section \ref{sec-intro-site}, Lemma \ref{chemdist} 
follows from the estimate 
$$
\P_0\bigg\{\d_{\mathrm{ch}}(x, y) \geq \rho\, |x-y|, \,x, y \in \mathcal C_\infty\bigg\}\,\leq c_{1} \,\,\e^{ - c_1 \,\,(\log |x-y|)^{1+c_{2}}},
$$
for constants $c_1,c_2>0$ and every $x\in\Z^d$. This statement and its proof can be found in \cite[Theorem 1.3]{DRS14}.
The above estimate and the assumption $|x-y| \ge \delta n / 2$ concludes the proof of Lemma \ref{chemdist}.
\end{proof}

%\textcolor{red}{
For every $e\in\mathcal U_d$, we recall that $v_e=k(\omega,e)e$, recall \eqref{def-n} and \eqref{v_e_def}. Let %the definition of the induced shift $\sigma_e(\omega)= \tau_{k(\omega,e)\,e)}\omega$ from \eqref{def-sigmae}), and 
$\ell=\ell(\omega)$ denote the shortest path distance from $0$ to $v_e$. Then we have the following tail estimate on $\ell$: %We also fix a shortest open path $(x_0=0,x_1,\ldots,x_\ell)$ to $k(\omega,e)e$, chosen in a way which is measurable w.r.t. $\mathcal B_\ell$. 

%\textcolor{red}{corrected by K}  
\begin{lemma}\label{lemma-ell}
Assumption 4 holds for the percolation models introduced in Section \ref{sec-intro-bond} and Section \ref{sec-intro-site}. In particular, for some constant $c_1,c_2>0$, 
$$
\P_0\big\{\ell>n\big\}\leq c_1\,\e^{-c_2 n}.
$$
\end{lemma}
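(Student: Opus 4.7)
The plan is to decompose $\{\ell > n\}$ according to whether $k := k(\omega,e) = |v_e|_1$ is a positive fraction of $n$ or much smaller. The large-$k$ piece is controlled by a block renormalization that gives an exponential tail for $k$ itself, while the small-$k$ piece is controlled by the chemical distance bound \eqref{est-chemdist} established in the proof of Lemma \ref{chemdist}. Writing $\rho$ for the constant supplied by Lemma \ref{chemdist},
\[
\P_0(\ell > n) \le \P_0\bigl(k > \tfrac{n}{2\rho}\bigr) + \P_0\bigl(\ell > n,\, k \le \tfrac{n}{2\rho}\bigr).
\]

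For the second term, on $\{k \le n/(2\rho),\, \ell > n\}$ we have $\d_{\mathrm{ch}}(0, ke) > n \ge 2\rho\,|ke|_1$. Summing over the value of $k$ and invoking the sharpened bound
\[
\P_0\bigl(\d_{\mathrm{ch}}(0,y) > N,\ y \in \mathcal C_\infty\bigr) \le C e^{-cN}\quad\text{whenever } N \ge \rho|y|_1,
\]
which can be read off from the renormalization proof of Lemma \ref{chemdist} through a Peierls-type count (a detour of length $N$ in the microscopic lattice forces $\Omega(N)$ bad macroscopic boxes along the geodesic, jointly exponentially rare in $N$), yields $\P_0\bigl(\ell > n,\, k \le n/(2\rho)\bigr) \le Cn\, e^{-cn}$.

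The substantive step is the first term. I will reuse the coarse-graining already used for Lemma \ref{chemdist}: for each of the five model classes there is $L_0$ so that whenever $L \ge L_0$ the good-box indicators $\{\mathbf 1_{R_z^{(L)}}\}_{z \in \Z^d}$ (Pisztora's event for Bernoulli and the random cluster model; the Drewitz--R\"ath--Sapozhnikov analogue for random interlacements, its vacant set, and the GFF level sets) stochastically dominate an i.i.d.\ Bernoulli field of density $p^\ast(L)$, with $p^\ast(L) \to 1$ as $L \to \infty$. I then consider blocks $B_j$, $j \ge 0$, arranged along the axis direction $e$ at spacing $\asymp L$, and strengthen the good-box condition to what I call \emph{axis-good}: $B_j$ is good in the renormalization sense (so that its unique crossing cluster lies in $\mathcal C_\infty$) \emph{and} the crossing cluster meets the one-dimensional axis slice $\{ie : jL \le i < (j+1)L\}$. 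Since the crossing cluster occupies a positive fraction $\kappa > 0$ of $B_j$, the axis condition costs at most $(1-\kappa)^L$ in probability, so the axis-good indicators still dominate a product field of density arbitrarily close to $1$. Axis-goodness of any single $B_j$ forces $k \le (j+1)L$, whence
\[
\P(k > m) \le (1-p^\ast(L))^{\lfloor m/L\rfloor} \le C e^{-cm}.
\]
Because $\{k > m\}$ is decreasing and $\Omega_0 = \{0 \in \mathcal C_\infty\}$ is increasing, Assumption 5 (FKG) gives $\P_0(k > m) \le \P(k > m)/\P(\Omega_0)$, finishing the first term.

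The main obstacle will be implementing the axis-goodness refinement uniformly across the five models: one has to verify in each case both that the macroscopic crossing cluster occupies a positive fraction of every good box (so that forcing it to meet an axis segment of length $L$ costs only $(1-\kappa)^L$) and that the Liggett--Schonmann--Stacey domination by a product field survives this refinement. For Bernoulli and the random cluster model these inputs are already present in Pisztora's renormalization invoked for Lemma \ref{chemdist}; for RI, VRI and GFF level sets they follow from the Drewitz--R\"ath--Sapozhnikov construction. Once this is in place, combining the two bounds yields $\P_0(\ell > n) \le c_1\,e^{-c_2 n}$, which is the claim.
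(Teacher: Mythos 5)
Your overall decomposition is the same as the paper's (see \eqref{lemaa-ell-3}): split $\{\ell>n\}$ into the event that $k=k(\omega,e)$ exceeds a fixed fraction of $n$, and the event that the chemical distance to $v_e$ is anomalously large although $k$ is small; your treatment of the second piece — a union bound over the $O(n)$ possible values of $k$ together with the strengthened, exponential-in-the-threshold form of \eqref{est-chemdist} — is fine and matches what the paper implicitly does. The genuine gap is in the first piece, the tail of $k$, which is the substantive content of the lemma.

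The step ``since the crossing cluster occupies a positive fraction $\kappa>0$ of $B_j$, the axis condition costs at most $(1-\kappa)^L$'' is not justified, and it is precisely where the difficulty sits. A connected set of density $\kappa$ in a $d$-dimensional box of side $L$ can perfectly well miss a prescribed segment of $L$ collinear sites (the segment carries only $L$ of the $L^d$ sites), and the bound $(1-\kappa)^L$ tacitly treats the $L$ events $\{ie\notin\mathcal C_\infty\}$ as independent. They are not; indeed FKG runs the wrong way here (it gives a \emph{lower} bound $\prod_i\P(ie\notin\mathcal C_\infty)$ for the probability that all $L$ sites avoid the cluster), and the probability that $L$ consecutive axis sites all avoid $\mathcal C_\infty$ is essentially the quantity $\P(k>L)$ you are trying to bound — so the argument is circular at its core. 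A secondary problem: goodness of a single box $B_j$ in the Pisztora sense does not imply that its crossing cluster lies in $\mathcal C_\infty$ (that needs an infinite chain of good boxes), so ``axis-goodness of any single $B_j$ forces $k\le(j+1)L$'' also fails as stated.

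For comparison, the paper does not coarse-grain at all for this tail: for i.i.d.\ percolation and the correlated site models it quotes \cite[Lemma 4.3]{BB07} and \cite[Section 5]{PRS15}; for the random cluster model in $d\ge3$ it uses the standing assumption $p>\widehat p_c(q)$ to produce, in each of $n$ disjointly translated finite slabs $S(L,m)$, an increasing connection event $A_{L,m}$ of uniformly positive free-boundary probability, and multiplies these across slabs via the DLR property and extremality of the free measure, yielding $\P(\bigcap_i\tau_{iLe}(A_L^c))\le a(L)^n$; in $d=2$ it uses exponential decay of truncated and dual connectivities. Some such slab or duality input (or the machinery behind \cite{BB07}) is what your $(1-\kappa)^L$ step must be replaced by; your FKG reduction from $\P_0$ to $\P$ is correct and can be kept.
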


%\textcolor{green}{Proof is correct. N.}

\begin{proof}
Lemma \ref{lemma-ell} follows from (\cite[Lemma 4.3]{BB07}) for i.i.d. Bernoulli bond and site percolations, 
and from \cite[Section 5]{PRS15} for the site percolation models appearing in Section \ref{sec-intro-site}. 

We turn to the requisite estimate corresponding to the random cluster model defined in Section \ref{sec-intro-bond}.
Let us first handle the case $d \ge 3$ and recall the definition of slab-critical probability $\widehat p_c(q)$ from
\eqref{slab-d3} and recall that we assume $p>\widehat p_c(q)$. Then we can take a large number $L$ so that $p > \widehat p_c(q, L)$ and $[0, L-1]\times\Z^{d-1}$ contains
an infinite cluster, which is a subset of the unique infinite cluster $\mathcal C_\infty$. 
For every $e\in \mathcal U_d$, we recall the definition of $k(\omega,e)$ from \eqref{def-n} and note that we write $v_e=k(\omega,e)e$. 
Also, by symmetry of the random-cluster measure, we can assume $e = e_1$ without loss of generality. Then, 
\begin{equation}\label{lemma-ell-1}
\bigg\{|v_e|\geq  L n;\, 0\in\mathcal C_\infty\bigg\} \subset \bigcap_{i=1}^n\, \tau_{i Le}(A_L^c),
\end{equation}
where %$A_L$ denotes the event that at least one of the sites in $\{je\colon j=1,\dots,L\}$ is contained in the infinite connected component in $[0,L-1]\times\Z^{d-1}$. 
%Alternatively, 
$$
A_L := \bigcup_{j = 1}^{L} \bigg\{0 \leftrightarrow je \textup{ in } [0,L-1] \times \Z^{d-1} \bigg\}.  
$$
We also define, for $m \ge 1$, 
$$
A_{L, m} := \bigcup_{j = 1}^{L} \{0 \leftrightarrow je \textup{ in } S(L,m) \}.  
$$
Then $\{A_L\}_{L>0}$ and $\{A_{L,m}\}_{m\geq 1}$ are increasing events. 
%Furthermore, we note that the random cluster model satisfies the {\it{finite-energy property}}. This means,
%for any $\delta\in (0,1/2)$, the conditional probability for an edge to be open, knowing the states of all the
%other edges, is bounded away from $0$ and $1$ uniformly in $p\in (\delta, 1-\delta)$
%and in the configuration
%away from this edge. This property also extends to any finite family of edges. Hence, 
 %by considering uniform estimates for conditional expectation,,,,,,,,,,, 
%there is a $p^{\prime} < 1$ such that the Bernoulli measure $\mathbb{P}_{p^{\prime}}$ stochastically dominates the random-cluster measure $\mathbb{P}_{p,q}^{0}$. 
%Hence, 
Then, by the DLR property of the random-cluster measure with the free boundary condition (\cite[Definition 4.29]{G06}), and the extremality of the random-cluster measure with the free boundary condition (\cite[Lemma 4.14]{G06}),
$$
\mathbb{P}_{p,q}^{\ssup 0}\left(\bigcap_{i=1}^n\, \tau_{i Le}(A_{L,m}^c)\right) \le \left(1-\mathbb{P}_{S(L,m), p,q}^{\ssup 0}(A_{L,m})\right)^n %\mathbb{P}_{p^{\prime}}\left(\cap_{i=1}^n\, \tau_{iLe}(A_L^c)\right) = . 
$$
Since $p> \widehat p_c(q, L)$, 
$$
\liminf_{m \to \infty} \mathbb{P}_{S(L,m), p,q}^{\ssup 0}(A_{L,m}) > 0. 
$$
Hence for some $0 < a(L) < 1$, 
\begin{equation}\label{lemma-ell-2}
\mathbb{P}_{p,q}^{(0)}\left(\bigcap_{i=1}^n\, \tau_{i Le}(A_{L}^c)\right) = \lim_{m \to \infty} \mathbb{P}_{p,q}^{(0)}\left(\bigcap_{i=1}^n\, \tau_{i Le}(A_{L,m}^c)\right) \le a(L)^n %\mathbb{P}_{p^{\prime}}\left(\cap_{i=1}^n\, \tau_{iLe}(A_L^c)\right) = . 
\end{equation} 
Then \eqref{lemma-ell-1} and the above estimate imply that for $d\geq 3$, $\P_0\big\{|v_e|\geq  n\big\}$ decays exponentially in $n$. 
To prove this statement 
in $d=2$, we again recall the definition of the slab-critical probability $\widehat p_c(q)$ and note that  $p>\widehat p_c(q)$. In this regime,
we have exponential decay of truncated connectivity (see \cite[Theorem 5.108 and the following paragraph]{G06}). In other words, if $\mathcal C$ denotes an open cluster at the origin, 
then
\begin{equation}\label{lemma-ell-4}
\lim_{n\to\infty}\frac 1n \log \P_{p,q}^{\ssup 0}\bigg\{\big|\mathcal C\big| \geq n^2\,\,;\big|\mathcal C\big|<\infty\bigg\}<0.
\end{equation}
In this super-critical regime $p>p_c(q)$, we also have exponential decay of dual connectivity (see \cite[Theorems 1 and 2]{BD12}).
In other words, in the dual random cluster model in $d=2$, the probability for two points $x$ and $y$ 
to be connected by a path decays exponentially fast with respect to the distance between $x$ and $y$.
We remark that in this case the infinite volume limits of the random-cluster measures with free or wired boundaries are identical if $p > p_c (q)$. 
See \cite[Theorem 4.63, (4.36) and Theorem 6.17]{G06}. 
Futhermore, in this case, \cite{BD12} shows $\widehat{p}_c(q) = p_c(q)$ for every $q \ge 1$.

%we have the dual connectivity, which corresponds to \cite[Theorem 6.75]{G99}. 
To show that $\P_0\big\{|v_e|\geq n\big\}$ decays exponentially in $n$ in $d=2$,
we now let $B_n$ to be the box $\{1,\dots,n\}\times\{1,\dots, n\}$. Then on the event $\{|v_e|\geq n, \,\,; 0\in\mathcal C_\infty\}$, none of the boundary sites $\{je: j = 1,...,n\}$ are in 
$\mathcal C_\infty$. Hence, either at least one of these sites is in a finite component of size larger than $n$ or there exists a dual crossing of $B_n$ in the direction of $e$.
The probabilities of both these events are exponentially small in $n$ by \eqref{lemma-ell-4} and the exponential decay of dual connectivity. Hence  $\P_0\big\{|v_e|\geq n\big\}$ decays exponentially in $n$ 
for $d\geq 2$.

%We follow the proof of \cite[Lemma 4.3]{BB07}. 
To conclude the proof of Lemma \ref{lemma-ell}, we note that %since $\ell$ denotes the graph distance from $0$ to $v_e= k(\omega,e)\,e$,
for  every $\eps>0$, 
\begin{equation}\label{lemaa-ell-3}
\big\{\ell>n\big\}  = \bigcup_{j=1}^{\lceil\eps n\rceil} \bigg\{\d_{\mathrm{ch}}(0,je)>n\,;\,\,0,je\in  \mathcal C_\infty\bigg\}\,\, \bigcup \bigg\{|v_e| \geq \lceil\eps n\rceil\bigg\}.
\end{equation}
Since $\P_0$-probabilities of the events in the first union are exponentially small by the uniform estimate \eqref{est-chemdist} on the chemical distance $\d_{\mathrm{ch}}(0, je)$,
and $\P_0\big\{|v_e|\geq n\big\}$ also decays exponentially in $n$, we now invoke union of events bound and absorb the linear factor coming from the number of events 
in the exponential bound and end up with the proof of Lemma \ref{lemma-ell}.
\end{proof}
\begin{lemma}\label{lemma-FKG}
The FKG inequality (i.e., Assumption 5) holds for percolation models introduced in Section \ref{sec-intro-bond} and Section \ref{sec-intro-site}.
\end{lemma}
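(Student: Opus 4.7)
The plan is to verify the FKG inequality separately for each of the five models listed, since each relies on rather different structural input.

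For the i.i.d.\ Bernoulli bond (and site) percolation model, the FKG inequality is the classical Harris--FKG inequality for product measures on $\{0,1\}^{\mathbb B_d}$ (or $\{0,1\}^{\Z^d}$), which follows immediately from the FKG lattice condition being trivially satisfied by a product measure. For the random-cluster model, the FKG property is standard for every $q \ge 1$: one first verifies the lattice condition on $\mathbb{P}_{\Lambda,p,q}^{\ssup\xi}$ (see Grimmett \cite{G06}, Theorem 3.8), and then passes to the infinite-volume limits $\mathbb{P}_{p,q}^{\ssup b}$, $b \in \{0,1\}$, using that weak limits of FKG measures remain FKG when tested against continuous increasing functions on the product topology.

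For the Gaussian free field excursion set $E^{\geq h}$, I would use Pitt's theorem \cite{Pitt82} (or the earlier Fortuin--Kasteleyn--Ginibre-type statement for jointly Gaussian variables): since the Green function $g(x,y) = c_d|x-y|^{2-d}$ is pointwise nonnegative, the centered Gaussian field $(\varphi(x))_{x\in\Z^d}$ has nonnegative covariances, hence the family is positively associated. Since each coordinate of the configuration $\omega_x = \mathbf{1}_{\{\varphi(x) \geq h\}}$ is a monotone function of $\varphi(x)$, positive association is preserved and the FKG inequality holds for every pair of increasing events in $\omega$.

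For the random interlacements $\mathcal I^{\ssup u}$ and its vacant set $\mathcal V^{\ssup u}$, the argument proceeds through the Poisson construction of Sznitman \cite{Sz10}: $\mathcal I^{\ssup u}$ is the trace on $\Z^d$ of a Poisson point process of doubly infinite trajectories modulo time-shift with intensity proportional to $u$. In the site-percolation coordinates $\omega_x = \mathbf{1}_{\{x \in \mathcal I^{\ssup u}\}}$, every increasing event in $\omega$ is an increasing functional of the underlying Poisson point configuration; for the vacant set coordinates $\omega_x = \mathbf{1}_{\{x \in \mathcal V^{\ssup u}\}}$, every increasing event in $\omega$ is a decreasing functional of the point configuration. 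Since both increasing and decreasing functionals of a Poisson point process are positively associated (this is the FKG property for Poisson measures; see Teixeira \cite{T09aap}), the FKG inequality on the site-percolation space $\Omega = \{0,1\}^{\Z^d}$ follows in both cases.

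The main obstacle, conceptually, is the random interlacement pair: one must recognize that the natural FKG statement lives on the Poisson space and then transport it to $\{0,1\}^{\Z^d}$ via the trace map. Once the Poisson-space FKG is invoked, everything else reduces to checking that the site variables are monotone functionals of the trajectory configuration, which is immediate from the construction.
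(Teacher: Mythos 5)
Your proposal is correct and is in substance the same as the paper's proof, which simply delegates to the literature: Grimmett's Theorem 4.17 for the random-cluster model (lattice condition plus infinite-volume limit), Teixeira's FKG inequality for random interlacements and their vacant set via the Poisson structure (note this is the reference \cite{T09}, not \cite{T09aap}), and Rodriguez's Remark 1.4 for the Gaussian free field, whose underlying mechanism is exactly Pitt's theorem applied to the nonnegative Green-function covariances. Your write-up merely unpacks the content of those citations, so there is nothing to add beyond fixing the reference for the Poisson-space FKG statement.
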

\begin{proof}
For the proof of The FKG inequality we refer to \cite[Theorem 4.17]{G06} for the random cluster model, to \cite{T09} for the random interlacement and the vacant set of it, 
and to \cite[Remark 1.4]{R15} for the level sets of Gaussian free fields. 
\end{proof}

Before we turn to the proof of Theorem \ref{sublinearthm}, we will need another technical fact. Note that, by Birkhoff's ergodic theorem, 
$$
\lim_{n\to\infty}\frac 1 {(2n+1)^d} \sum_{|x|\leq n} \, \1\big\{x\in \mathcal C_\infty\big\}=\theta(p) \qquad\P_0-\,\mbox{ a.s.}
$$
where $\theta(p)=\P(0\leftrightarrow\infty)$ is the percolation density. We will need 
a stronger version of the above result and its argument will use the one dimensional pointwise ergodic theorem and an
induction argument on the dimension. 
%It is, as you mentioned, a simplification of
%the argument in the proof of the multidimesional pointwise ergodic theorem
%(it is easier that the ergodic theorem because we deal only with bounded
%functionals).
We will prove this result for every discrete point process (i.e. a shift
invariant ergodic random subset of $\mathbb Z^d$). For our case, we will take our infinite cluster $\mathcal C_\infty$ to be the point process.

\begin{lemma}\label{lemma-ergodic}
Let $\mathcal P$ be a discrete point process in $d$ dimensions, and let
$C^{\ssup d}=[a_1,b_1]\times\cdots\times[a_d,b_d]$ be a cube in $\mathbb R^d$.
Then for almost every $\omega$, 
$$
\lim_{n\to\infty}\frac{|\mathcal P\cap n\,\,C^{\ssup d}|} {|n\,\,C^{\ssup d}|} = \Theta
$$
where $\Theta=\P(0\in\mathcal P)$ is the density of $\mathcal P$.
\end{lemma}

\begin{proof}
We will prove the Lemma by induction on the dimension $d$. For $d=1$, the Lemma follows directly from the pointwise
ergodic theorem, when we subtract the sum in $[a_1n]$ from that in
$[b_1n]$.

 We now assume that the statement holds for dimension $d-1$.
We fix $\eps>0$ and $K$ say that $n\in\mathbb Z$ is good if for every $k>K$,
$$
\left|
\frac{|\mathcal P\cap \{n\}\times
k([a_2,b_2]\times\cdots\times[a_d,b_d])|}
{|k([a_2,b_2]\times\cdots\times[a_d,b_d])|}
- \Theta
\right|
<\eps.
$$

Note that if $K$ is large enough, then by the induction hypothesis the
probability that $0$ is good is greater than $1-\eps$. So by the one
dimensional result, a.s. for all $n$ large enough, proportion larger than
$1-\eps$ of the numbers in $[a_1n,b_1n]$ are good, and the statement of the lemma
follows.
\end{proof}

We now turn to the proof of Theorem \ref{sublinearthm}. %assuming Theorem \ref{averagesublinear}.
  
\noindent {\bf{Proof of Theorem \ref{sublinearthm}:}} Let us fix every $G\in \mathcal G_\infty$ and for every nearest neighbor occupied path $0=x_0,\dots,x_n=x$ in $\mathcal C_\infty$, 
let $V(\omega,x)=V_G(\omega,x)=\sum_{j=0}^{n-1} G(\tau_{x_j},x_{j+1}-x_j)$ as defined in \eqref{Psidef}. Recall that we have to show 
\begin{equation}\label{eq0}
\lim_{n\to\infty} \max_{\heap{x\in\mathcal C_\infty}{|x|\leq n}} \,\,\frac{|V(\omega,x)|}n =0\qquad\P_0-\,\,\mathrm{a.s.}
\end{equation}
Let us first make an observation based on the facts proved in Lemma \ref{chemdist} and Lemma \ref{lemma-ell} . Indeed, with of $V(\omega,v_e)$ defined in \eqref{V_ve}, Lemma \ref{lemma-ell}  and our 
uniform bound assumption \eqref{unifbound} imply that $\E_0 [|V(\omega,v_e)|] <\infty$. Furthermore, $\E_0[V(\omega, v_e)]=0$ by our induced mean-zero assumption \eqref{meanzero}. If we now write $F(\omega)= V(\omega,v_e)$ and recall that  $n_{k+1}(\omega,e)=n_k(\sigma_e\omega,e)$ from \eqref{def-nk}, then $V(\omega,v_e)=V(\omega, n_k(\omega,e) \, e)= \sum_{j=0}^{k-1} F \, \circ \,\sigma^j_e(\omega)$. Since 
the induced shift $\sigma_e:\Omega_0\rightarrow\Omega_0$ is measure-preserving and ergodic, by Birkhoff's ergodic theorem, 
\begin{equation}\label{eq00}
\lim_{k\to\infty} \frac 1 k V(\omega, n_k(\omega,e) \, e)=0 \qquad\P_0-\,\,\mathrm{a.s.}
\end{equation}
We now fix an arbitrary $\eps>0$. We claim that
\begin{equation}\label{eq01}
 \lim_{n\to\infty} \frac 1{n^d} \sum_{\heap {x\in \mathcal C_\infty}{|x|\leq n}} \, \1_{\big\{|V(x,\omega)|>\eps n\big\}} =0 \qquad\P_0-\,\,\mathrm{a.s.}
 \end{equation}
Actually \eqref{eq00} forms the core of the argument for the proof of \eqref{eq01}. Indeed, given \eqref{eq00}, the proof of the claim \eqref{eq01} for all the percolation models
including long-range correlations introduced in Section \ref{sec-intro-bond} and Section \ref{sec-intro-site}, now closely follows the proof of  \cite[Theorem 5.4]{BB07} deduced for i.i.d. Bernoulli percolation. 
In fact, the crucial fact \cite[(5.28)]{BB07} can be proved using the FKG inequality. % which is available also in the all the models appearing in Section \ref{sec-intro-bond} and Section \ref{sec-intro-site}.
Recall that the FKG inequality asserts that for two increasing events $A$ and $B$ (i.e, events that are preserved by addition of open edges), $\P(A\cap B)\geq \P(A)\P(B)$.
Hence, based on the assertion \eqref{eq00} we have just proved and using Lemma \ref{lemma-FKG}, %the FKG inequality, 
we can repeat the arguments of \cite[Theorem 5.4]{BB07} to prove the assertion \cite[(5.28)]{BB07} therein and thus deduce \eqref{eq01}. 
Then, for every arbitrary $\eps>0$, \eqref{eq01} in particular implies that, for $n$ large enough, 
\begin{equation}\label{eq1}
\sum_{\heap {x\in \mathcal C_\infty}{|x|\leq n}} \, \1_{\big\{|V(x,\omega)|>\eps n\big\}} < \eps n^d \qquad\P_0-\,\,\mathrm{a.s.}%typo corrected by K
\end{equation}
Let us make another observation based on Lemma \ref{chemdist}. Recall that $\theta(p)>0$ denotes the percolation density, i.e., $\theta(p)$ is the probability that $0$ is in the infinite open cluster $\mathcal C_\infty$.
Also, for every arbitrary $\eps>0$ as before, let us set 
\begin{equation}\label{delta-def}
\delta= \frac 12 \bigg(\frac{4\eps}{\theta(p)}\bigg)^{\frac 1d}.
\end{equation}
Then by Lemma \ref{chemdist}, for every $x,y\in \mathcal C_\infty$ with $|x|<n$, $|y|<n$ and $\delta n /2\leq |x-y| < \delta n$, 
\begin{equation}\label{eq2}
\d_{\mbox{ch}}(x,y) < \rho \delta n.
\end{equation}
Finally, let us recall Lemma \ref{lemma-ergodic}. 
Hence for every fixed $\delta>0$, for every $n$ large enough and $\P_0$-almost surely, in a ball of radius $\delta n$ in $\mathcal C_\infty \cap [-n,n]^d$ %corrected by K; %$[-n,n]^d$ 
there are at least $\delta^d (2n)^d \,\frac {\theta} 2$ points in $\mathcal C_\infty$ (Lemma \ref{lemma-ergodic} suffices for the above statement because we take the infinite cluster $\mathcal C_\infty$ as our point
process, and we use Lemma \ref{lemma-ergodic} for finitely many cubes $C^{\ssup d}$).

Then for our choice of $\delta$ as required in \eqref{delta-def}, 
\begin{equation}\label{eq3}
\begin{aligned}
\#\big\{\mbox{points in a box of radius}\,\, \delta n\,\,\mbox{in}\,\,[-n,n]^d\,\,\mbox{in}\,\, \mathcal C_\infty\} %&\geq %\delta^d (2n)^d \frac {\theta(p)} 2 \\
> 2\eps n^d.
\end{aligned}
\end{equation}
Given \eqref{eq1} and \eqref{eq3}, we now claim that, for large enough $n$ and every $x\in [-n,n]^d$, there exists $y\in  [-n,n]^d \cap \mathcal C_\infty$ so that $|y-x|< \delta n$ and 
$$
|V(\omega,y)| \leq \eps n\qquad\P_0-\,\,\mathrm{a.s.}
$$
Indeed, by \eqref{eq1} there are at most $\eps n^d$ points $z\in [-n,n]^d$ such that $|V(\omega,z)| \geq \eps n$ and by \eqref{eq3}, there are at least $2\eps n^d$ points in $B_{n\delta}(x)\cap \mathcal C_\infty$. Hence, we have at least one point $y\in  [-n,n]^d \cap \mathcal C_\infty$ such that   $\delta n /2\leq |y-x|< \delta n$ and $|V(\omega,y)| \leq \eps n$, 
$\P_0$- almost surely.

Let us now prove \eqref{eq0}. Recall the definition of $V$ from \eqref{Psidef}. Then, by \eqref{eq2},
$$
\begin{aligned}
\big| V(\omega,x) - V(\omega,y)\big| &\leq \d_{\mbox{ch}}(x,y) \,\mbox{ess} \sup_{\omega- \P_0} G(\omega, x)\\
&\leq \rho \delta n A,
\end{aligned}
$$
 for some $A< \infty$, recall \eqref{unifbound}. Since $|V(\omega,y)| \leq \eps n$, then $\P_0$- almost surely, 
$$
\begin{aligned}
|V(\omega,x)| &\leq |V(\omega,y)|+ \rho \delta n A \\
& \leq\eps n+ \rho \delta n A.
\end{aligned}
$$
Since $\eps>0$ is arbitrary and $\delta\to 0$ as $\eps\to 0$ according to \eqref{delta-def}, Theorem \ref{sublinearthm} is proved.
\qed

We have an immediate corollary to Theorem \ref{sublinearthm}.
\begin{cor}\label{cor-sublinear}
Let $G\in \mathcal G_\infty$. For every $\eps>0$,  there exists $c_\eps=c_\eps(\omega)$ so that, for every sequence of points $(x_k)_{k=0}^n$ on $\mathcal C_\infty$ with $x_0=0$ and $|x_{k+1}-x_{k}|=1$,
$$
\bigg|  \sum_{k=0}^{n-1} G(\tau_{x_k} \omega, x_{k+1}-x_k)\bigg| \leq c_\eps+n\eps.
$$
In particular, 
\begin{equation}\label{ub2}
 \sum_{k=0}^{n-1} G(\tau_{x_k} \omega, x_{k+1}-x_k) \geq -c_\eps- n\eps.
\end{equation}
\end{cor}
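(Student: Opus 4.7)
The plan is to reduce the claim directly to Theorem \ref{sublinearthm}, with a simple bookkeeping step to handle small values of $n$. The key observation is that for any nearest-neighbor path $(x_0,\ldots,x_n)$ on $\mathcal{C}_\infty$ with $x_0=0$, the sum $\sum_{k=0}^{n-1} G(\tau_{x_k}\omega, x_{k+1}-x_k)$ is, by definition \eqref{Psidef}, precisely the corrector $V(\omega, x_n)$; moreover, because $G\in\mathcal{G}_\infty$ satisfies the closed loop condition \eqref{closedloop}, this value depends only on the endpoint $x_n$ and not on the path chosen to reach it. Since the path takes $n$ unit steps from the origin, the endpoint satisfies $|x_n|_1 \le n$.

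Next, I would invoke Theorem \ref{sublinearthm}: for $\P_0$-a.e.\ $\omega\in\Omega_0$ and every fixed $\eps>0$, there exists a (random) integer $N=N(\omega,\eps)$ such that
\begin{equation*}
\max_{\substack{x\in\mathcal{C}_\infty \\ |x|\le n}} |V(\omega,x)| \;\le\; \eps\, n \qquad \text{for all } n\ge N.
\end{equation*}
For every $n\ge N$ this immediately yields $|V(\omega,x_n)|\le \eps n$. For the finitely many indices $n<N$, I would use the uniform boundedness assumption \eqref{unifbound}: writing $A=\max_{e\in\mathcal{U}_d}\mathrm{ess\,sup}_{\P_0} G(\cdot,e)<\infty$, the definition of $V$ along any nearest-neighbor occupied path gives the deterministic bound $|V(\omega,x_n)|\le nA \le NA$.

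Combining both regimes and setting $c_\eps(\omega) := N(\omega,\eps)\,A$, we obtain
\begin{equation*}
\bigl|V(\omega,x_n)\bigr| \;\le\; c_\eps(\omega) + \eps\, n \qquad \text{for every } n\ge 0,
\end{equation*}
which is the stated two-sided bound; the one-sided inequality \eqref{ub2} follows trivially. I do not foresee a genuine obstacle, as the nontrivial content (sublinear growth of $V$ on the cluster, which required the chemical distance and FKG input) has already been absorbed into Theorem \ref{sublinearthm}; the only care needed is to make $c_\eps$ depend measurably on $\omega$ through $N(\omega,\eps)$, which is automatic since the sublinear growth holds on a set of full $\P_0$-measure.
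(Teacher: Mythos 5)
Your proposal is correct and matches the paper's intent: the paper gives no separate argument for this corollary, treating it as immediate from Theorem \ref{sublinearthm}, and your write-up simply spells out that reduction (path-independence via the closed-loop condition, $|x_n|\le n$, sublinearity for $n\ge N(\omega,\eps)$, and the uniform bound \eqref{unifbound} for the finitely many $n<N$). The only micro-point worth noting is that \eqref{unifbound} is stated as a one-sided essential supremum, so the two-sided bound $|G|\le A$ you use for small $n$ should be justified by the antisymmetry $G(\omega,e)=-G(\tau_e\omega,-e)$ coming from the closed-loop condition, exactly as the paper itself implicitly does in the proof of Theorem \ref{sublinearthm}.
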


\subsection{Proof of the upper bound for Theorem \ref{thmmomgen}.}\label{subsec-ub}
We now prove the upper bound in Theorem \ref{thmmomgen}
using the sub-linear growth property of gradient functions established in Theorem \ref{sublinearthm} and Corollary \ref{cor-sublinear}.
\begin{lemma}[The upper bound]\label{ub}
For $\P_0$- almost every $\omega\in \Omega_0$,  
$$
\limsup_{n\to\infty} \frac 1n\log E_{0}^{\pi,\omega} \bigg\{ \exp \big\{\sum_{k=0}^{n-1} f\big(\tau_{X_k}\omega, X_{k+1}-X_k\big)\big\}\bigg\} \leq \inf_{G\in \mathcal G_\infty}\Lambda(f, G),
$$
where
\begin{equation}\label{UfG}
\Lambda(f, G)=\mathrm{ess}\sup_{\P_0} \bigg(\log \sum_{e} \1_{\{\omega_e=1\}}\pi_\omega(0,e) \exp\big\{f(\omega,e)+ G(\omega,e)\big\}\bigg).
\end{equation}
\end{lemma}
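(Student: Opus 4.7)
The plan is to adapt the classical corrector-perturbation trick of \cite{KRV06,Y08,R06}, where the sub-linearity property of Theorem \ref{sublinearthm} (packaged as Corollary \ref{cor-sublinear}) plays the role that a Sobolev/moment estimate plays in the elliptic case. Fix $G\in\mathcal{G}_\infty$ arbitrarily and let $V=V_G$ be the corresponding corrector defined in \eqref{Psidef}. Because every step $X_k\to X_{k+1}$ of the SRWPC traverses an open edge of $\mathcal{C}_\infty$, the shift covariance \eqref{def-shiftcov} gives the telescoping identity
\begin{equation*}
V(\omega,X_n)=\sum_{k=0}^{n-1}\bigl(V(\omega,X_{k+1})-V(\omega,X_k)\bigr)=\sum_{k=0}^{n-1}G(\tau_{X_k}\omega,X_{k+1}-X_k),
\end{equation*}
valid $P^{\pi,\omega}_0$-a.s.\ on cluster trajectories.

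Inserting this identity into the exponent, I would write
\begin{equation*}
E^{\pi,\omega}_0\biggl[\exp\biggl(\sum_{k=0}^{n-1}f(\tau_{X_k}\omega,X_{k+1}-X_k)\biggr)\biggr]=E^{\pi,\omega}_0\biggl[\exp\biggl(\sum_{k=0}^{n-1}(f+G)(\tau_{X_k}\omega,X_{k+1}-X_k)\biggr)\,e^{-V(\omega,X_n)}\biggr].
\end{equation*}
Corollary \ref{cor-sublinear} supplies the uniform bound $|V(\omega,X_n)|\le c_\eps+n\eps$ for every cluster path of length $n$, hence $e^{-V(\omega,X_n)}\le e^{c_\eps+n\eps}$. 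I would then peel off the remaining expectation one step at a time using the Markov property: conditioning on $\mathcal{F}_{n-1}=\sigma(X_0,\ldots,X_{n-1})$,
\begin{equation*}
E^{\pi,\omega}_0\bigl[e^{(f+G)(\tau_{X_{n-1}}\omega,X_n-X_{n-1})}\,\big|\,\mathcal{F}_{n-1}\bigr]=\sum_{e\in\mathcal{U}_d}\pi_{\tau_{X_{n-1}}\omega}(0,e)\,e^{(f+G)(\tau_{X_{n-1}}\omega,e)}\le e^{\Lambda(f,G)},
\end{equation*}
so iterating $n$ times yields $E^{\pi,\omega}_0[\exp(\sum (f+G))]\le e^{n\Lambda(f,G)}$. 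Combining everything, taking $\frac{1}{n}\log$, sending $n\to\infty$, then $\eps\downarrow 0$, and finally infimizing over $G\in\mathcal{G}_\infty$ gives the claim.

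The main delicate point is the justification of the last displayed inequality at the random shifted environment $\tau_{X_{n-1}}\omega$ for \emph{every} realization of $X_{n-1}\in\mathcal{C}_\infty$, since $\Lambda(f,G)$ is defined as an essential supremum under the non-translation-invariant measure $\P_0$. I would resolve this by a standard cluster-translate argument: letting $A\subset\Omega_0$ be the $\P_0$-full-measure set on which $\log\sum_e\1_{\{\omega_e=1\}}\pi_\omega(0,e)e^{(f+G)(\omega,e)}\le\Lambda(f,G)$, one has $\P(\Omega_0\cap A^c)=0$, so the translation invariance of the unconditioned $\P$ gives $\P(\tau_x^{-1}(\Omega_0\cap A^c))=0$ for every $x\in\Z^d$; a countable union then shows that $\P_0$-a.s.\ $\tau_x\omega\in A$ simultaneously for every $x\in\mathcal{C}_\infty(\omega)$, which is exactly what is needed to apply the inequality along the entire random trajectory. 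This is the one place where the non-ellipticity and the lack of translation invariance of $\P_0$ must be handled, but once it is settled the rest of the argument reduces to the bookkeeping above.
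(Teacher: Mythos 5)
Your proof is correct and follows essentially the same route as the paper: the paper likewise bounds $E^{\pi,\omega}_0[\exp(\sum_k (f+G))]\le \e^{n\Lambda(f,G)}$ by one-step conditioning and the Markov property, and then removes the $G$-sum (your $V(\omega,X_n)$) using the bound $\sum_k G(\tau_{X_k}\omega,X_{k+1}-X_k)\ge -c_\eps-n\eps$ from Corollary \ref{cor-sublinear} before taking $\frac1n\log$, $\eps\downarrow 0$, and $\inf_G$. Your explicit countable-union/translation-invariance justification for applying the $\P_0$-essential supremum at every $\tau_{X_{n-1}}\omega$ along the trajectory is a point the paper leaves implicit, and it is handled correctly.
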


\begin{proof}
Fix $G\in \mathcal G_\infty$. By the definition of the Markov chain $P^{\pi,\omega}_0$ we have $\P_0$-a.s., 
$$
\begin{aligned}
&E^{\pi,\omega}_0\bigg\{\exp\bigg\{ f(\tau_{X_{k}}\omega,X_{k+1}-X_{k})+G(\tau_{X_{k+1}}\omega,X_{k+1}-X_{k})\bigg\}\bigg|X_{k}\bigg\} \\
&= \sum_{|e|=1} \pi_\omega\big(X_{k},X_{k}+e\big) \e^{f(\tau_{X_{k}}\omega,e)+ G(\tau_{X_{k}}\omega,e)}\\
&=\sum_{|e|=1}\1_{\{(\tau_{X_{k}}\omega)(e)=1\}} \pi_\omega\big(X_{k},X_{k}+e\big) \e^{f(\tau_{X_{k}}\omega,e)+ G(\tau_{X_{k}}\omega,e)}\\
&\leq \e^{\Lambda(f, G)},
\end{aligned}
$$
where the uniform upper bound follows from \eqref{UfG}.

Invoking the Markov property and successive conditioning, we have

\begin{equation}\label{ub1}
E^{\pi,\omega}_0\bigg\{\exp\bigg\{\sum_{k=0}^{n-1} \bigg(f(\tau_{X_k}\omega,X_{k+1}-X_{k})+G(\tau_{X_k}\omega,X_{k+1}-X_{k})\bigg)\bigg\}\bigg\} \leq \e^{n\Lambda(f, G)}.
\end{equation}
We now recall Corollary \ref{cor-sublinear} and plug in the lower bound \eqref{ub2} in \eqref{ub1}. Then if we divide both sides by $n$, take logarithm and pass to $\limsup_{n\to\infty}$, we obtain the upper bound
$$
\limsup_{n\to\infty} \frac 1n\log E^{\pi,\omega}_0 \bigg\{ \exp \big\{\sum_{k=0}^{n-1} f\big(\tau_{X_k}\omega, X_{k+1}-X_k\big)\big\}\bigg\} \leq \Lambda(f, G)+ \eps.
$$
Passing to $\eps\to 0$ and subsequently taking $\inf_{G\in \mathcal G_\infty}$ we finish the proof of the lemma.
\end{proof}

\section{Equivalence of bounds: Min-max Theorems based on entropic coercivity}\label{sec-proof-ldp}

In this section we turn to the proof of the crucial fact that the lower bound obtained from Corollary \ref{corlb} and the upper bound from Lemma \ref{ub} indeed match. The following theorem holds the key argument of our analysis and will also finish the proof of Theorem \ref{thmmomgen}. Recall the lower bound variational formula $\overline H(f)$ from \eqref{thm-lbub2a}, and the upper bound variational formula $\Lambda(f,G)$ from \eqref{UfG}.% and hence, Theorem \ref{thmlevel2}. %In
%this section we show that $\overline H(f)$ equals the upper bound obtained in the last subsection. Modulo some care about containment in the infinite cluster, 
%the line of arguments follow parallel to \cite{KRV06} (and also \cite{R06}, (\cite{Y08}).

%THE CHANGES MADE IN SECTION 5 START HERE AND GO THROUGH THE END OF THIS ARTICLE:

\begin{theorem}[Equivalence of bounds]\label{thm-lbub}
For every continuous and bounded function $f$ on $\Omega_0\times\mathcal U_d$, 
$$
\begin{aligned}
\overline H(f)&= \inf_{G\in\mathcal G_\infty} \,\mathrm{ess}\,\sup_{\P_0}\,\bigg(\log\sum_{e\in\mathcal U_d} \1_{\omega(e)=1}\,\pi_\omega(0,e)\, \exp\big\{f(\omega,e)+ G(\omega,e)\big\}\bigg) \\
&=\inf_{G\in\mathcal G_\infty} \Lambda(f,G)
\end{aligned}
$$
\end{theorem}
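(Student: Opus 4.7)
The inequality $\overline H(f)\le \inf_{G\in\mathcal G_\infty}\Lambda(f,G)$ is immediate from the already established bounds: Corollary \ref{corlb} gives $\liminf_{n\to\infty}\frac{1}{n}\log E^{\pi,\omega}_0[\exp(\sum f(\tau_{X_k}\omega,X_{k+1}-X_k))]\ge \overline H(f)$ for $\P_0$-a.e.\ $\omega$, while Lemma \ref{ub} gives the matching $\limsup \le \inf_{G\in\mathcal G_\infty}\Lambda(f,G)$; sandwiching $\frac{1}{n}\log E^{\pi,\omega}_0[\cdots]$ between these two yields the desired inequality. All the content is in proving the reverse inequality $\overline H(f)\ge \inf_{G\in\mathcal G_\infty}\Lambda(f,G)$.

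For this direction the plan is to start from the representation \eqref{thm-lbub2a} and interchange the outer $\sup_{\phi,\tilde\pi}$ with the inner $\inf_g$. The functional $(\phi,\tilde\pi,g)\mapsto \Phi(\phi,\tilde\pi,g):=\int\phi\sum_e \tilde\pi(f+\log(\pi/\tilde\pi)+g-g\circ\tau_e)\,d\P_0$ is concave in $(\phi,\tilde\pi)$ (the relative entropy term being jointly convex) and linear in $g$, so a Sion-type min-max principle is the natural tool. The obstacle is that the outer sup is not over a compact set when parametrized by bounded $g$, and $\mathfrak I$ is only lower semicontinuous and unbounded. The remedy, announced already in the introduction, is entropy coercivity: by Lemma \ref{thm-lbub-lemma1} and Lemma \ref{thm-lbub-lemma2} (to be proved below) one can restrict the sup to sublevel sets $\{\mu\in \mathcal M_1^\star : \mathfrak I(\mu)\le K\}$, which are weakly compact subsets of $\mathcal M_1(\Omega_0\times\mathcal U_d)$, at the price of an error that vanishes as $K\to\infty$ since $|\langle f,\mu\rangle|\le \|f\|_\infty$. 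On each such sublevel set the min-max theorem applies and legitimates the interchange.

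After swapping, one has, up to the vanishing correction from truncation, $\overline H(f)=\inf_{g}\sup_{\phi,\tilde\pi}\Phi(\phi,\tilde\pi,g)$ where $g$ ranges over bounded measurable functions on $\Omega_0$. For fixed $g$, writing $G_g(\omega,e):=g(\omega)-g(\tau_e\omega)$, the inner sup is a standard exponential-tilt computation: maximization in $\tilde\pi$ subject to $\sum_e \tilde\pi(\omega,e)=1$ is achieved at $\tilde\pi^\star\propto \pi_\omega(0,e)\exp(f+G_g)$ and yields the integrand $\log\sum_e \pi_\omega(0,e)\exp(f+G_g)$; the subsequent sup over probability densities $\phi$ against $\P_0$ is then the essential supremum, namely $\Lambda(f,G_g)$. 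Thus $\overline H(f)=\inf_{g\text{ bounded}}\Lambda(f,G_g)$, and what remains is the verification $G_g\in\mathcal G_\infty$ for bounded $g$: uniform boundedness \eqref{unifbound} follows from $\|g\|_\infty<\infty$; the closed-loop property \eqref{closedloop} is automatic because $V_{G_g}$ telescopes to $g(\omega)-g(\tau_x\omega)$; and the induced zero-mean condition \eqref{meanzero} reduces to $\E_0[g-g\circ\sigma_e]=0$, which holds because $\sigma_e$ is $\P_0$-measure preserving. Hence $\inf_{g}\Lambda(f,G_g)\ge \inf_{G\in\mathcal G_\infty}\Lambda(f,G)$, closing the loop.

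The main technical hurdle will be the rigorous execution of the truncation-and-min-max step: one must show that the entropy coercivity estimates let one pass from the sublevel-set representation back to the global $\overline H(f)$ with vanishing error, and that the interchange survives the limit $K\to\infty$ and the approximation of the resulting optimizer $g$ by functions of finite $L^\infty$ norm. Compared with the elliptic RWRE treatment of Yilmaz and Rosenbluth, the present approach is considerably cleaner precisely at the very last step: their limit object is the $L^{d+\varepsilon}$-weak limit of conditional expectations $F_k$, which is not manifestly a gradient and which requires Mazur-type convex combinations and tower-property arguments to recover the closed-loop and mean-zero conditions; here the approximating $G_{g_n}$ are gradients to begin with, so any accumulation point $G$ inherits the closed-loop condition \eqref{closedloop} and, thanks to the chemical-distance bound of Lemma \ref{chemdist} and the tail bound of Lemma \ref{lemma-ell}, also the induced zero-mean condition \eqref{meanzero} required for membership in $\mathcal G_\infty$.
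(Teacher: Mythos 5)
Your reduction of the problem to the reverse inequality, and your first min--max step (the interchange in $\tilde\pi$ for fixed $\phi$, justified by the coercivity of the entropy $\sum_e\tilde\pi\log\tilde\pi$, followed by the explicit Gibbs maximizer) coincide with the paper's Lemma \ref{thm-lbub-lemma1}. The gap is in the second interchange. You assert that the $\sup_\phi$ and $\inf_g$ can be swapped so as to yield $\overline H(f)=\inf_{g\ \mathrm{bounded}}\sup_\phi\int\phi L(g,\cdot)\,\d\P_0=\inf_{g\ \mathrm{bounded}}\Lambda(f,G_g)$, with compactness supplied by restricting to sublevel sets $\{\mathfrak I\le K\}$. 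This does not work: the map $\phi\mapsto\int\phi L\,\d\P_0$ is linear on the probability densities in $L^1_+(\P_0)$, whose super-level sets are not weakly compact, and the sublevel sets of $\mathfrak I$ are not weakly closed either, since $\mathfrak I$ fails to be lower semicontinuous (this is exactly Lemma \ref{nonlsc} of the paper). The device that restores compactness is the $\eps$-entropy penalization of Lemma \ref{thm-lbub-lemma2}, but it only yields the weaker statement $\overline H(f)\ge\liminf_{\eps\to0}\inf_g\,\eps\log\E_0[\e^{\eps^{-1}L(g,\cdot)}]$. For fixed $g$ one has $\eps\log\E_0[\e^{\eps^{-1}L(g,\cdot)}]\uparrow\mathrm{ess\,sup}\,L(g,\cdot)$ as $\eps\to0$, but the infimum over $g$ sits \emph{inside} the $\eps$-limit and cannot be pulled out; so the identity $\overline H(f)=\inf_{g\ \mathrm{bounded}}\Lambda(f,G_g)$ is not established by this route, and your subsequent verification that $G_g\in\mathcal G_\infty$ for bounded $g$ (correct as far as it goes) does not close the argument.

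Because of this, the weak-limit construction that you relegate to a closing remark is in fact the core of the proof and cannot be bypassed. One must pick near-optimal $g_n$ at scale $\eps_n\to0$, prove uniform $L^p(\P_0)$ bounds on the gradients $G_n(\omega,u)=g_n(\omega)-g_n(\tau_u\omega)$ (the paper's estimates \eqref{thm-lbub12}--\eqref{thm-lbub14}, which use $\pi_\omega(0,u)\ge1/(2d)$ on open edges), extract a weak limit $G$, and then show $G\in\mathcal G_\infty$: the essential-sup bound comes from the chain of inequalities involving Lemma \ref{lemma-monotonicity}, a truncation at level $M$, and the limits $n\to\infty$, $M\uparrow\infty$, $\lambda\to\infty$ in \eqref{thm-lbub10}--\eqref{thm-lbub11}; the closed-loop property passes to the limit because each $G_n$ is a gradient; and the induced zero-mean property requires the separate argument of Lemma \ref{lemma-last-last}, which combines $\E_0[g_n\circ\sigma_e-g_n]=0$ with the exponential tail bound on $\ell$ (Lemma \ref{lemma-ell}) and a H\"older/path-counting estimate to control the contribution of long excursions. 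None of these steps is present in your proposal beyond the final sentence, and the limit $G$ is in general not of the form $g-g\circ\tau_e$ for a bounded $g$, which is precisely why the class $\mathcal G_\infty$ is needed in the statement.
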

We will prove Theorem \ref{thm-lbub} in several steps. Note that we already know that $\overline H(f)\leq \inf_{G\in\mathcal G_\infty} \Lambda(f,G)$ and it remains the prove the inequality in the opposite direction.
The first step is to invoke a min-max argument to exchange the order of $\sup_{\widetilde\pi}$ and $\inf_g$ in \eqref{thm-lbub2}, and subsequently solve the maximization problem in $\widetilde\pi$. The resulting assertion is
\begin{lemma}[Entropic coercivity in $\widetilde\pi$]\label{thm-lbub-lemma1}
For every continuous and bounded function $f$ on $\Omega_0\times\mathcal U_d$, 
$$
\overline H(f)=\sup_{\phi}\, \inf_{g} \int \d\P_0(\omega)\,\phi(\omega) L(g,\omega)
$$
where
\begin{equation}\label{thm-lbub5}
L(g, \omega)=L_f(g,\omega)=\log\bigg(\sum_{e\in\mathcal U_d}\pi_\omega(0,e)\exp\big\{f(\omega,e)+ g(\omega)-g(\tau_e\omega)\big\}\bigg).
\end{equation}
\end{lemma}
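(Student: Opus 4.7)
The plan is to interchange the supremum over $\widetilde\pi\in\widetilde\Pi$ with the infimum over $g$ in \eqref{thm-lbub2a} via a minimax theorem, and then to solve the resulting inner supremum in $\widetilde\pi$ in closed form using the Gibbs variational formula. For fixed $\phi$, set
$$
F(\widetilde\pi,g) := \int_{\Omega_0}\phi(\omega)\sum_{e\in\mathcal U_d}\widetilde\pi(\omega,e)\bigg(f(\omega,e)+g(\omega)-g(\tau_e\omega)-\log\frac{\widetilde\pi(\omega,e)}{\pi_\omega(0,e)}\bigg)\,\d\P_0(\omega),
$$
so that \eqref{thm-lbub2a} reads $\overline H(f)=\sup_\phi\sup_{\widetilde\pi}\inf_g F(\widetilde\pi,g)$, with $g$ ranging over bounded measurable functions on $\Omega_0$.

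For the minimax swap, $F(\widetilde\pi,\cdot)$ is affine, hence convex and continuous; and $F(\cdot,g)$ is concave, since the relative-entropy term $-\int\phi\sum_e\widetilde\pi\log(\widetilde\pi/\pi_\omega(0,\cdot))\d\P_0$ is strictly concave and the rest is linear in $\widetilde\pi$. To obtain the compactness required by Sion's theorem I would work in the weak-$\ast$ topology on $L^\infty(\Omega_0;\R^{\mathcal U_d})$ and replace $\widetilde\Pi$ by its weakly closed relaxation $\widetilde\Pi^{\rm rel}$ defined by $\widetilde\pi\geq 0$, $\sum_e\widetilde\pi(\omega,e)=1$, and $\widetilde\pi(\omega,e)=0$ on $\{\pi_\omega(0,e)=0\}$, keeping everything but the open strict-positivity half of the support condition. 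All three constraints are linear when tested against $L^1(\P_0)$, hence weak-$\ast$ closed, so $\widetilde\Pi^{\rm rel}$ is weak-$\ast$ compact by Banach--Alaoglu; weak-$\ast$ lower semicontinuity of relative entropy (Fatou) makes $F(\cdot,g)$ weak-$\ast$ upper semicontinuous. Sion's theorem then yields
$$
\sup_{\widetilde\pi\in\widetilde\Pi^{\rm rel}}\inf_g F(\widetilde\pi,g) = \inf_g\sup_{\widetilde\pi\in\widetilde\Pi^{\rm rel}}F(\widetilde\pi,g).
$$

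For the closed-form step, the inner supremum decouples in $\omega$: for fixed $g$ and $\omega$, with $h_e := f(\omega,e)+g(\omega)-g(\tau_e\omega)$, the classical Donsker--Varadhan/Gibbs identity
$$
\sup_\nu\bigg\{\sum_e\nu(e)h_e-\sum_e\nu(e)\log\frac{\nu(e)}{\pi_\omega(0,e)}\bigg\} = \log\sum_e\pi_\omega(0,e)\,\e^{h_e} = L(g,\omega)
$$
(over probability vectors $\nu$ on $\mathcal U_d$ absolutely continuous w.r.t. $\pi_\omega(0,\cdot)$) is attained at $\widetilde\pi^\ast(\omega,e)\propto\pi_\omega(0,e)\e^{h_e}$. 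Since $\widetilde\pi^\ast>0$ exactly when $\pi_\omega(0,e)>0$, i.e., exactly when $\omega_e=1$, it lies in the original $\widetilde\Pi$, so $\sup_{\widetilde\Pi^{\rm rel}}F(\cdot,g)=\sup_{\widetilde\Pi}F(\cdot,g)=\int\phi L(g,\cdot)\d\P_0$. The matching equality $\sup_{\widetilde\Pi}\inf_g F=\sup_{\widetilde\Pi^{\rm rel}}\inf_g F$ follows from a short approximation argument via the convex combinations $(1-\varepsilon)\widetilde\pi+\varepsilon\pi\in\widetilde\Pi$ together with weak-$\ast$ upper semicontinuity; taking $\sup_\phi$ produces the formula of the lemma.

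The main technical obstacle is the minimax exchange itself: $\widetilde\Pi$ is not compact in any norm topology, $g$ lives in an unbounded space, and the strict-positivity half of the support condition defining $\widetilde\Pi$ is not closed under weak-$\ast$ limits. The resolution---reflected in the lemma's title---is that the relative-entropy piece of $F$ is \emph{coercive in $\widetilde\pi$}: its weak-$\ast$ lower semicontinuity supplies the upper-semicontinuity hypothesis of Sion's theorem on the compact relaxation $\widetilde\Pi^{\rm rel}$, while the automatic strict positivity of the closed-form Gibbs maximizer guarantees that the relaxation is innocuous and the resulting sup--inf identity lives intrinsically on $\widetilde\Pi$.
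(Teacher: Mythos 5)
Your proof is correct and follows essentially the same route as the paper: a minimax exchange for fixed $\phi$ justified by the coercivity/compactness supplied by the relative-entropy term, followed by the pointwise Gibbs (Lagrange-multiplier) maximization over $\widetilde\pi$ that produces the log-sum-exp functional $L(g,\omega)$. The only differences are technical packaging: the paper invokes Von Neumann's min-max theorem using weakly compact super-level sets of $\widetilde\pi\mapsto\mathfrak F(\widetilde\pi,g)$ where you use an explicit weak-$\ast$ compact relaxation $\widetilde\Pi^{\mathrm{rel}}$ plus Sion's theorem, and it does not spell out (as you carefully do) why the non-closed strict-positivity half of the support condition defining $\widetilde\Pi$ is harmless, since the Gibbs maximizer automatically lies in $\widetilde\Pi$.
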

\begin{proof}
Let us rewrite \eqref{thm-lbub2a} as
\begin{equation}\label{thm-lbub2}
\begin{aligned}
\overline H(f)&= \sup_{\phi}\, \sup_{\tilde\pi\in\widetilde\Pi}\,\inf_{g} \bigg[\int \d\P_0(\omega)\,\phi(\omega) \bigg\{
\sum_{e\in \mathcal U_d}  \tilde\pi(\omega,e) \big(F(\omega,e) -\log\widetilde\pi(\omega,e)\big)\bigg\}\bigg],
\end{aligned}
 \end{equation}
where
\begin{equation}\label{def-F}
F(\omega,e)=F(\pi,f,g,\omega,e)=f(\omega,e) + \log {\pi_\omega(0,e)}
+\big(g(\omega)-g(\tau_e\omega)\big).
\end{equation}
and the infimum is being taken over bounded and measurable $g$. For every $\widetilde\pi$ and $g$, let us write the functional 
\begin{equation}\label{def-frakF}
\begin{aligned}
\mathfrak F(\widetilde\pi,g)&= \int \d\P_0(\omega)\,\phi(\omega) \bigg\{
\sum_{e\in \mathcal U_d}  \tilde\pi(\omega,e) \big[f(\omega,e) + \log {\pi_\omega(0,e)}
+\big(g(\omega)-g(\tau_e\omega)\big) -\log\widetilde\pi(\omega,e)\big]\bigg\} \\
&=\int \d\P_0(\omega)\,\phi(\omega) \bigg\{
\sum_{e\in \mathcal U_d}  \tilde\pi(\omega,e) \big[F(\omega,e) -\log\widetilde\pi(\omega,e)\big]\bigg\}
\end{aligned}
\end{equation}
with $F(\omega,e)$ defined in \eqref{def-F} (recall \eqref{thm-lbub2}). Let us fix an arbitrary density $\phi$ (i.e., $\phi\geq 0$ and $\E_0\phi=1$). First we would like to show that, 
\begin{equation}\label{thm-lbub-minmax1}
\sup_{\tilde\pi\in\widetilde\Pi}\,\,\inf_{g} \mathfrak F(\widetilde\pi,g)= \inf_g\,\,\sup_{\tilde\pi\in\widetilde\Pi} \mathfrak F(\widetilde\pi,g).
\end{equation}
This requires the following coercivity argument. 
Note that,  corresponding to each $\widetilde\pi\in \widetilde\Pi$,  we have the entropy functional 
$$
\mathrm{Ent}(\mu_{\widetilde\pi})=\int\sum_e \widetilde\pi(\omega,e)\log\widetilde\pi(\omega,e) \phi(\omega)\d\P_0(\omega)
$$
for the probability measure $\d\mu_{\widetilde\pi}(\omega,e)= \widetilde\pi(\omega,e) \big(\phi(\omega)\d\P_0(\omega)\big) \in \Mcal_1(\Omega_0\times \mathcal U_d)$.
Then for every fixed $\phi$, the map $\widetilde\pi\mapsto \mathrm{Ent}(\mu_{\widetilde\pi})$ is convex, lower semi-continuous and has weakly compact sub-level sets (i.e., for every $a\in \R$, the set $\{\widetilde\pi\in\widetilde\Pi\colon \mathrm{Ent}(\mu_{\widetilde\pi}) \leq a\}$ is weakly compact). Furthermore, for every probability density $\phi$, every continuous and bounded function $f$ on $\Omega_0\times\mathcal U_d$ and bounded measurable function $g$, and for every $\widetilde\pi\in \widetilde\Pi$, 
$$
\begin{aligned}
\int \phi(\omega) \sum_{e\in\mathcal U_d} \widetilde \pi(\omega,e) F(\omega,e)\d\P_0&= \int \phi(\omega) \sum_{e\in\mathcal U_d} \widetilde \pi(\omega,e) \big[f(\omega,e) + \log {\pi_\omega(0,e)}
+\big(g(\omega)-g(\tau_e\omega)\big] \d\P_0
\\
&\leq \big(\|f\|_\infty+2\|g\|_\infty\big) \,\int \phi(\omega) \sum_{e\in\mathcal U_d} \widetilde \pi(\omega,e)\d\P_0\\
&=\|f\|_\infty+2\|g\|_\infty:=C<\infty.
\end{aligned}
$$
We conclude that for every bounded and measurable $g$, the map $\widetilde\pi\mapsto \mathfrak F(\widetilde\pi,g)$ is concave, weakly upper-semicontinuous and has weakly compact super-level sets  $\{\widetilde\pi\colon\, \mathfrak F(\widetilde\pi,g)\geq a\}$ for every $a\in \R$. Furthermore, for every $\widetilde\pi\in \widetilde\Pi$, the map $g\mapsto \mathfrak F(\widetilde\pi,g)$ is linear and continuous in $g$. Then, in view of {\it{Von-Neumann's min-max theorem}} (p. 319, \cite{AE84}), the equality \eqref{thm-lbub-minmax1} holds. Hence, 
 \begin{equation}\label{thm-lbub3}
 \begin{aligned}
 \overline H(f) = \sup_{\phi}\, \inf_{g}\, \sup_{\widetilde\pi}\, \bigg[\int \d\P_0(\omega)\,\phi(\omega) \bigg\{
&\sum_{e\in \mathcal U_d}  \tilde\pi(\omega,e)\big[F(\omega,e)-\log \widetilde\pi(\omega,e)\big]\bigg\}\bigg]
\end{aligned}
 \end{equation}
Since the integrand above depends only locally in $\widetilde\pi$, we can bring the $\sup_{\widetilde\pi\in\widetilde\Pi}$ inside the integral, and solve the 
variational problem 
$$
\sup_{\widetilde\pi}\sum_{e\in \mathcal U_d}  \tilde\pi(\omega,e)\big[F(\omega,e)-\log \widetilde\pi(\omega,e)\big]
$$
subject to the Lagrange multiplier constraint $\sum_e \widetilde\pi(\cdot,e)=1$. The maximizer is 
$$
\widetilde\pi(\cdot,e)=\frac{\exp[F(\omega,e)]}{\sum_{e\in\mathcal U_d}\exp[F(\omega,e)]},
$$ 
and if we plug in this value in \eqref{thm-lbub3} and recall the definition of $F(\omega,e)$ from \eqref{def-F},
then \eqref{thm-lbub3} leads us to
\begin{equation}\label{thm-lbub4}
\begin{aligned}
\overline H(f) &=  \sup_{\phi}\, \inf_{g} \bigg[\int \d\P_0(\omega)\,\phi(\omega) \log\bigg(\sum_{e\in\mathcal U_d}\pi_\omega(0,e)\exp\big\{f(\omega,e)+ g(\omega)-g(\tau_e\omega)\big\}\bigg)\bigg] \\
&=\sup_{\phi}\, \inf_{g} \,\,\int \d\P_0(\omega)\,\phi(\omega) L(g,\omega),
\end{aligned}
\end{equation}
which concludes the proof of Lemma \ref{thm-lbub-lemma1}.
\end{proof}
Now we would like to exchange $\sup_{\phi}$ and $\inf_{g}$ in \eqref{thm-lbub4}. For this, we need to invoke a {\it{compactification}} argument
based on an {\it{entropy penalization method}}. This is the the content of the next lemma. 

%We split the proof of Proposition \ref{propequal} into two main steps.

%\textcolor{green}{We need to change the lemma a little. We make things a little uglier in order to be able impose boundedness on the gradient of the function $g_k$.}

\begin{lemma}[Entropy penalization and coercivity in $\phi$]\label{thm-lbub-lemma2}
For every continuous and bounded function $f$ on $\Omega_0\times\mathcal U_d$,
$$
\overline H(f) \geq \liminf_{\eps\to 0}\inf_{g} \eps \log \E_0\bigg[\e^{\eps^{-1} L(g,\cdot)}\bigg]
$$
where $L(g,\cdot)$ is the functional defined in \eqref{thm-lbub5}.
\end{lemma}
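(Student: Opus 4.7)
The strategy is to introduce an entropy regularization of the density $\phi$ in order to exchange the outer $\sup_\phi$ with the inner $\inf_g$ in the variational formula of Lemma \ref{thm-lbub-lemma1}, and then identify the resulting inner supremum via the Gibbs (Donsker--Varadhan) variational principle. Concretely, for every $\eps>0$ and every probability density $\phi$ w.r.t.\ $\P_0$ I would introduce the regularized functional
$$
F_\eps(\phi, g) := \int_{\Omega_0} \phi(\omega)\, L(g,\omega)\, \d\P_0(\omega) \; - \; \eps\, \mathcal H(\phi\,|\,\P_0),
$$
where $\mathcal H(\phi\,|\,\P_0) := \int \phi\log\phi\, \d\P_0 \ge 0$ is the relative entropy. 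Since $\mathcal H(\phi|\P_0)\geq 0$, one has $F_\eps(\phi,g) \leq \int \phi\, L(g,\cdot)\, \d\P_0$ pointwise, and therefore Lemma \ref{thm-lbub-lemma1} immediately gives
$$
\overline H(f) \;=\; \sup_\phi\,\inf_g \int \phi\, L(g,\cdot)\,\d\P_0 \;\geq\; \sup_\phi\,\inf_g F_\eps(\phi, g).
$$

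Next I would apply a min--max theorem to swap the sup and inf on the right-hand side. The requisite ingredients parallel those used in Lemma \ref{thm-lbub-lemma1}: (i) $\phi \mapsto F_\eps(\phi,g)$ is the sum of a linear map and the strictly concave, weakly upper semicontinuous functional $-\eps\, \mathcal H(\cdot\,|\,\P_0)$, hence is concave and weakly upper semicontinuous; (ii) for each fixed $\omega\in\Omega_0$ the map $g \mapsto L(g,\omega)$ is a log-sum-exp of functions affine in $g$, hence convex, and therefore $g \mapsto F_\eps(\phi,g)$ is convex; (iii) the crucial compactness input comes from \emph{entropy coercivity}: since $f$ and the test function $g$ are bounded, $L(g,\cdot)$ is bounded uniformly, so every super-level set $\{\phi : F_\eps(\phi,g) \ge a\}$ is contained in a sub-level set of $\mathcal H(\cdot\,|\,\P_0)$, which is weakly compact in the space of probability densities (e.g., by Dunford--Pettis). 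Invoking Von Neumann's min--max theorem as cited from \cite{AE84} in the proof of Lemma \ref{thm-lbub-lemma1}, one obtains
$$
\sup_\phi\,\inf_g F_\eps(\phi,g) \;=\; \inf_g\,\sup_\phi F_\eps(\phi,g).
$$

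Finally, for each fixed bounded measurable $g$ the inner supremum is identified by the Gibbs variational principle: for every bounded measurable $h$ on $\Omega_0$,
$$
\sup_\phi \bigg\{\int \phi\, h\, \d\P_0 \;-\; \eps\, \mathcal H(\phi\,|\,\P_0)\bigg\} \;=\; \eps\,\log \E_0\!\left[\e^{h/\eps}\right],
$$
the supremum being attained at $\phi^\star \propto \e^{h/\eps}$. Applied with $h = L(g,\cdot)$ and combined with the previous two displays, this yields, for every $\eps>0$,
$$
\overline H(f) \;\geq\; \inf_g\, \eps\,\log \E_0\!\left[\e^{L(g,\cdot)/\eps}\right],
$$
and taking $\liminf_{\eps\to 0}$ on the right-hand side gives the desired inequality. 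I expect the main obstacle to be the verification of the hypotheses of the min--max theorem, in particular pinning down the right topology on the space of densities and exploiting the coercivity of relative entropy to obtain weak compactness of the relevant super-level sets; the Gibbs identity itself is completely standard and the entropy penalty $-\eps\mathcal H(\cdot|\P_0)$ is tailor-made to make the $\phi$-variable tractable while being quantitatively negligible in the $\eps\to 0$ limit.
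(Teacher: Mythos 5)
Your proposal is correct and follows essentially the same route as the paper: penalize by $-\eps\int\phi\log\phi\,\d\P_0\le 0$ to get the lower bound from Lemma \ref{thm-lbub-lemma1}, swap $\sup_\phi$ and $\inf_g$ by Von Neumann's min--max theorem using the weak compactness of entropy sub-level sets, and evaluate the inner supremum as $\eps\log\E_0[\e^{\eps^{-1}L(g,\cdot)}]$. The only cosmetic difference is that you invoke the Gibbs/Donsker--Varadhan variational identity by name where the paper derives the optimal $\phi\propto\e^{\eps^{-1}L(g,\cdot)}$ by a first-order perturbation argument; these are the same computation.
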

\begin{proof}
We start from \eqref{thm-lbub4}. For every probability density $\phi\in L^1_+(\P_0)$, note that  its entropy functional
$$
\mathrm{Ent}(\phi)= \int \phi(\omega)\,\log\phi(\omega)\,\d\P_0(\omega).
$$
is always non-negative by Jensen's inequality. Hence, for every fixed $\eps>0$, we have a lower bound
\begin{equation}\label{thm-lbub6}
\begin{aligned}
\overline H(f) &\geq \sup_{\phi}\, \inf_{g} \bigg[\int \d\P_0(\omega)\,\phi(\omega)\bigg( L(g,\omega) - \eps \log\phi(\omega)\bigg)\bigg].
\end{aligned}
\end{equation}
Again, $\phi\mapsto \mathrm{Ent}(\phi)$ is convex and weakly lower semicontinuous in $L^1_+(\P_0)$, with its sub-level sets $\{\phi\colon \,\int \phi\log\phi\d\P_0\leq a\}$ being weakly compact in $L^1_+(\P_0)$ for all $a\in\R$. 
Also, by \eqref{thm-lbub5}, for every bounded $f$ on $\Omega_0\times\mathcal U_d$ and bounded $g$, and for every $\phi$,
$$
\int \phi(\omega) L(g,\omega)\d\P_0= \int \phi(\omega)\log\bigg(\sum_{e\in\mathcal U_d}\pi_\omega(0,e)\exp\big\{f(\omega,e)+ g(\omega)-g(\tau_e\omega)\big\}\bigg) \leq \|f\|_\infty+2 \|g\|_\infty=C<\infty.
$$
Then, if we write 
\begin{equation}\label{def-Acal}
\mathcal A_\eps(g,\phi)=\int \d\P_0(\omega)\,\big[\phi(\omega) L(g,\omega) - \eps \phi(\omega)\log\phi(\omega)\big],
\end{equation}
then, for every $\eps>0$,
%NBJul28 Changed like to as
%like
as
in Lemma \ref{thm-lbub-lemma1}, for every bounded and measurable $g$, the map $g\mapsto \mathcal A_\eps(g,\phi)$ is convex and continuous and the map $\phi\mapsto \mathcal A_\eps(g,\phi)$ is concave and upper semicontinuous with compact super-level sets (i.e. the set $\{\phi\colon\, \mathcal A_\eps(g,\phi) \geq a\}$ is weakly compact for all $a\in\R$).
Applying  Von-Neumann's min-max theorem once more, we can swap the order of $\sup_\phi$ and $\inf_g$ in \eqref{thm-lbub6}. Hence,
\begin{equation}\label{thm-lbub8}
\begin{aligned}
\overline H(f) \geq \inf_{g} \,\sup_{\phi}\, \mathcal A_\eps(g,\phi)%\bigg[\int \d\P_0(\omega)\phi(\omega)\,\bigg( L(g,\omega) - \eps \log\phi(\omega)\bigg)\bigg] 
&=\inf_{g} \eps \log \E_0\bigg[\e^{\eps^{-1} L(g,\cdot)}\bigg] \\
&\geq \liminf_{\eps\to 0}\inf_{g} \eps \log \E_0\bigg[\e^{\eps^{-1} L(g,\cdot)}\bigg].
%&=\inf_g \,\sup_\phi\,\bigg[\int \d\P_0(\omega)\bigg(\phi(\omega) L(g,\omega) - \eps \phi(\omega)\log\phi(\omega)\bigg)\bigg].
\end{aligned}
\end{equation}
We remark that the second identity above follows from a standard perturbation argument in $\phi$ and the definition of $\mathcal A_\eps$ set in \eqref{def-Acal}. Indeed, for every admissible class of test functions $\psi$, we need to solve for $\phi$ by setting
$$
\frac{\d}{\d\eta}\bigg|_{\eta=0} \bigg[\mathcal A_\eps(g,\phi+\eta\psi)\bigg]=0
$$
for every fixed $\eps>0$ and $g$, and subject to the condition $\int \phi \d\P_0=1$. The solution is given by
$$
\phi(\cdot)= \frac{\exp\{\eps^{-1} L(g,\cdot)\}}{\E_0[\exp\{\eps^{-1} L(g,\cdot)\}]}.
$$
If we substitute this value of $\phi$ in \eqref{def-Acal}, then we are led to the identity \eqref{thm-lbub8}. This concludes the proof of Lemma \ref{thm-lbub-lemma2}.
\end{proof}% leads to 
%\begin{equation}\label{thm-lbub8}
%\overline H(f) \geq \inf_{g} \eps \log \E_0\bigg[\e^{\eps^{-1} L(g,\cdot)}\bigg] \geq \liminf_{\eps\to 0}\inf_{g} \eps \log \E_0\bigg[\e^{\eps^{-1} L(g,\cdot)}\bigg]
%\end{equation}
We need the following important lemma, whose proof is deferred 
%NBJul28 removed the word to.
%to
until the end of the proof of Theorem \ref{thm-lbub}. Recall that $\mathcal U_d=\{\pm u_i\}_{i=1}^d$ the nearest neighbors of the origin $0$.
\begin{lemma}\label{lemma-last}
For every given $\eta>0$, there exists a sequence $\eps_n\to 0$ and a sequence $g_n$ of bounded measurable functions such that, 
\begin{equation}\label{thm-lbub9}
\eta+ \overline H(f) \geq \eps_n \log \E_0\bigg[\e^{\eps_n^{-1} L(g_n,\cdot)}\bigg],
\end{equation}
and for every $u\in\mathcal U_d$ for every $p\geq 1$,
\begin{equation}\label{def-grad-n}
G_n(\omega,u)= \1\{0\in\mathcal C_\infty\}\,\1\{\omega(u)=1\} \,\,\big(g_n(\omega)- g_n(\tau_u\omega)\big)
\end{equation}
converges weakly in $L^p(\P_0)$ as well as in distribution (as random variables) along some subsequence to some $G(\cdot,u)$. Furthermore, $G \in \mathcal G_\infty$.
\end{lemma}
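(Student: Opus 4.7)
My plan has three successive steps of extraction and verification.

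\textit{Selection and uniform $L^p$ bound.} Lemma \ref{thm-lbub-lemma2} provides, for every $\eta > 0$, a sequence $\eps_n \downarrow 0$ and bounded measurable $g_n$ satisfying \eqref{thm-lbub9}. The key pointwise observation --- which I extract from the definition \eqref{thm-lbub5} and the fact that $\pi_\omega(0, u) \geq \tfrac{1}{2d}$ whenever $\omega(u)=1$ --- is that on $\{\omega \in \Omega_0 : \omega(u)=1\}$,
\[
G_n(\omega, u) \leq L(g_n, \omega) + C, \qquad -G_n(\omega, u) \leq L(g_n, \tau_u \omega) + C,
\]
with $C := \log(2d) + \|f\|_\infty$ (the lower bound uses that $(\tau_u\omega)(-u) = \omega(u) = 1$, hence $\tau_u\omega \in \Omega_0$). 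From \eqref{thm-lbub9} and Chebyshev, $\P_0[L(g_n) \geq a] \leq \exp(-\eps_n^{-1}(a - \eta - \overline H(f)))$ for $a > \eta + \overline H(f)$; a layer-cake integration yields $\sup_n \E_0[L(g_n)_+^p] < \infty$ for each $p \geq 1$. Transporting the second bound back to $\P_0$ via $\P$-invariance of $\tau_u$ (the set $\{\omega \in \Omega_0 : \omega(u) = 1\}$ maps into $\Omega_0$ under $\tau_u$) yields the same uniform bound for $L(g_n, \tau_u\cdot)_+$ on $\{\omega(u)=1\}$. Hence $\{G_n(\cdot, u)\}_n$ is bounded in $L^p(\P_0)$ for every $p \geq 1$ and every $u \in \mathcal U_d$; Banach--Alaoglu and a countable diagonal argument extract a subsequence along which $G_n(\cdot, u) \rightharpoonup G(\cdot, u)$ weakly in every $L^p$ simultaneously, with convergence in distribution along a further subsequence by tightness.

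\textit{Uniform boundedness and closed loop.} The exponential tail gives $\|(L(g_n) - a)_+\|_{L^p(\P_0)} \to 0$ as $n \to \infty$ for every $a > \eta + \overline H(f)$, so testing the weak limit against any non-negative $\varphi \in L^q$ yields $\int G(\cdot, u)\varphi\,d\P_0 \leq (a + C)\int \varphi\,d\P_0$. Letting $a \downarrow \eta + \overline H(f)$ and running the symmetric argument for $-G$ delivers $\|G(\cdot, u)\|_{L^\infty(\P_0)} \leq \eta + \overline H(f) + C$, which is \eqref{unifbound}. For \eqref{closedloop}, each $G_n$ telescopes along any deterministic finite closed loop $0 = x_0, \ldots, x_k = 0$ restricted to $\mathcal C_\infty(\omega)$: pointwise on the event $A$ that the loop is open, $\sum_j G_n(\tau_{x_j}\omega, x_{j+1}-x_j) = 0$. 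Testing against any bounded $h$ and performing the $\P$-invariant change of variables $\omega' = \tau_{x_j}\omega$ (which maps $A$ into $\Omega_0$) converts each summand into $\int G_n(\cdot, u_j)\phi_j\,d\P_0$ for a bounded $\phi_j \in L^\infty(\P_0)$; weak convergence passes the identity $\int h \1_A \sum_j G_n(\tau_{x_j}\omega, u_j)\,d\P_0 = 0$ to its $G$-analogue, and exhausting the countable family of finite deterministic loops gives \eqref{closedloop}.

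\textit{Induced mean zero: the main obstacle.} Since each $G_n$ is a coboundary along open edges, the sum along any open path from $0$ to $v_e$ telescopes to $V_{G_n}(\omega, v_e) = g_n(\omega) - g_n(\sigma_e\omega)$; because $\sigma_e$ preserves $\P_0$, $\E_0[V_{G_n}(\cdot, v_e)] = 0$ for every $n$. The difficulty is to pass to the limit without any direct control on $g_n$. My strategy is to truncate at a cutoff $N$ on the shortest-path length $\ell(\omega)$ from $0$ to $v_e$: on $\{\ell \leq N\}$ only finitely many admissible path shapes contribute, and the change-of-variables / weak-convergence technique from the closed-loop step gives
\[
\E_0[\1_{\ell \leq N} V_{G_n}(\cdot, v_e)] \xrightarrow[n \to \infty]{} \E_0[\1_{\ell \leq N} V_G(\cdot, v_e)].
\]
For the tail $\{\ell > N\}$, I combine the pointwise bound $|V_{G_n}(\omega, v_e)| \leq C\ell + 2\sum_{i=0}^{\ell} L(g_n, \tau_{x_i}\omega)_+$ (from the first step), $\P$-shift invariance (to bound each shifted $L(g_n)_+$ moment by the unshifted one uniformly in the shift), and the exponential tail $\P_0(\ell > N) \leq c_1 \e^{-c_2 N}$ from Lemma \ref{lemma-ell}: careful Cauchy--Schwarz yields $\sup_n \E_0[\1_{\ell > N} |V_{G_n}(\cdot, v_e)|] \to 0$ as $N \to \infty$, i.e.\ $\{V_{G_n}(\cdot, v_e)\}_n$ is uniformly integrable. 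Since $|V_G| \leq \|G\|_{\infty}\ell \in L^1(\P_0)$, dominated convergence also gives $\E_0[\1_{\ell \leq N} V_G] \to \E_0[V_G]$. Combining these limits with $\E_0[V_{G_n}] = 0$ yields $\E_0[V_G(\cdot, v_e)] = 0$, which is \eqref{meanzero}, and completes the proof that $G \in \mathcal G_\infty$.
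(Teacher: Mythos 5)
Your proposal is correct and follows the same overall architecture as the paper's proof (extract $(\eps_n,g_n)$ from the entropy-penalized bound, establish uniform $L^p(\P_0)$ bounds on $G_n$, pass to subsequential weak limits, then verify the three defining properties of $\mathcal G_\infty$, with the induced mean-zero property handled by truncating on the graph distance $\ell$ and killing the tail via the exponential decay of $\P_0(\ell>N)$). The local arguments differ in two places, both to your advantage in transparency: where the paper applies Lemma \ref{lemma-monotonicity} to pass from the $\eps_n^{-1}$-exponential moment to a $\lambda=1$ exponential moment of $\pm(g_n-g_n\circ\tau_u)$ under $\P_{0,u}$ (displays \eqref{thm-lbub12}--\eqref{thm-lbub14}), you instead dominate $\pm G_n(\cdot,u)$ pointwise by $L(g_n,\cdot)_+$ and $L(g_n,\tau_u\cdot)_+$ plus a constant and run Chebyshev on \eqref{thm-lbub9}; and where the paper obtains \eqref{unifbound} by citing the first inequality of \eqref{thm-lbub11}, you extract the $L^\infty$ bound directly from the vanishing of $\|(L(g_n)-a)_+\|_{L^p}$. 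Both substitutions are valid, and your tail bound $\P_0[L(g_n)\ge a]\le\exp(-\eps_n^{-1}(a-\eta-\overline H(f)))$ in fact gives slightly more information than the paper uses.

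One point needs care in the mean-zero step. After decomposing $\{\ell=j\}$ over open paths, the number of path shapes of length $j$ grows like $(2d)^j$, while $\P_0(\ell=j)$ decays only like $\e^{-c_2 j}$ with a constant $c_2$ you do not control; a literal Cauchy--Schwarz ($p=q=2$) produces the factor $(2d)^{1/2}\e^{-c_2/2}$ per unit length, which need not be smaller than $1$. The paper resolves this by choosing general H\"older exponents so that $c_1(2d)^{1/q}\e^{-c_2/p}<1$ (see \eqref{eq-rho}), which is available to you precisely because you have uniform $L^q$ bounds for \emph{every} $q$. Alternatively, your pointwise bound $|V_{G_n}|\le C\ell+2\sum_i L(g_n,\tau_{x_i}\omega)_+$ lets you sum over lattice sites $|x|\le j$ (polynomially many) rather than over paths, avoiding the exponential path count altogether. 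Either fix closes the gap; as written, ``careful Cauchy--Schwarz'' is the one phrase that would not survive literal implementation.
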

We first assume the above lemma and prove Theorem \ref{thm-lbub}. %Then by \eqref{thm-lbub8} and Lemma \ref{lemma-last}, 
%\begin{equation}\label{thm-lbub9}
%\overline H(f) \geq \eps_n \log \E_0\bigg[\e^{\eps_n^{-1} L(g_n,\cdot)}\bigg].
%\end{equation}
For this purpose, we need another lemma.
\begin{lemma}\label{lemma-monotonicity}
For every $\lambda>0$, every probability measure $\mu$ and for  every random variable $X$ with finite exponential moment, if we set
$$
\psi(\lambda)=\log\E^{\ssup\mu}\big[\e^{\lambda X}\big],
$$ 
then the map $\lambda\mapsto \frac{\psi(\lambda)}{\lambda}$ is increasing in $[0,\infty)$. 
\end{lemma}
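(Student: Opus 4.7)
The plan is to reduce the claim to two entirely standard facts: (i) $\psi$ is convex on $[0,\infty)$ with $\psi(0)=0$, and (ii) for any convex function $f$ on $[0,\infty)$ with $f(0)=0$, the ratio $f(\lambda)/\lambda$ is non-decreasing on $(0,\infty)$. Together these immediately yield the lemma. Alternatively, the same conclusion can be obtained in one line from Jensen's inequality applied to the concave map $t\mapsto t^{\lambda_1/\lambda_2}$.

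First I would observe that $\psi(0)=\log \E^{\ssup\mu}[1]=0$. Next, for the convexity of $\psi$, I would apply H\"older's inequality with conjugate exponents $1/\theta$ and $1/(1-\theta)$: for $\lambda_1,\lambda_2\geq 0$ and $\theta\in[0,1]$,
\begin{equation*}
\E^{\ssup\mu}\bigl[\e^{(\theta\lambda_1+(1-\theta)\lambda_2)X}\bigr]
=\E^{\ssup\mu}\bigl[\e^{\theta\lambda_1 X}\cdot \e^{(1-\theta)\lambda_2 X}\bigr]
\leq \E^{\ssup\mu}\bigl[\e^{\lambda_1 X}\bigr]^{\theta}\,\E^{\ssup\mu}\bigl[\e^{\lambda_2 X}\bigr]^{1-\theta}.
\end{equation*}
Taking logarithms yields $\psi(\theta\lambda_1+(1-\theta)\lambda_2)\leq \theta\psi(\lambda_1)+(1-\theta)\psi(\lambda_2)$, i.e.\ the convexity of $\psi$ on $[0,\infty)$.

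Finally, for $0<\lambda_1<\lambda_2$, set $\theta=\lambda_1/\lambda_2\in(0,1)$ and use convexity together with $\psi(0)=0$:
\begin{equation*}
\psi(\lambda_1)=\psi\bigl(\theta\lambda_2+(1-\theta)\cdot 0\bigr)\leq \theta\,\psi(\lambda_2)+(1-\theta)\,\psi(0)=\frac{\lambda_1}{\lambda_2}\,\psi(\lambda_2),
\end{equation*}
which rearranges to $\psi(\lambda_1)/\lambda_1\leq \psi(\lambda_2)/\lambda_2$, proving monotonicity on $(0,\infty)$. (If desired, the value at $\lambda=0$ can be defined by continuity as $\lim_{\lambda\downarrow 0}\psi(\lambda)/\lambda=\E^{\ssup\mu}[X]$, which is then bounded above by every later value.) There is no genuine obstacle here: this is a textbook computation, and I only list it because it will be invoked in the subsequent entropy/coercivity arguments.
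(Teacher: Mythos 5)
Your proof is correct, but it takes a different route from the paper. The paper argues by calculus: it asserts that $\psi$ is convex and twice differentiable, computes
$\bigl(\psi(\lambda)/\lambda\bigr)' = \bigl(\lambda\psi'(\lambda)-\psi(\lambda)\bigr)/\lambda^2$, and notes that the numerator vanishes at $\lambda=0$ and has derivative $\lambda\psi''(\lambda)>0$, hence is nonnegative. You instead establish convexity of $\psi$ directly from H\"older's inequality and then use the elementary chord-slope inequality for a convex function vanishing at the origin, namely $\psi(\theta\lambda_2)\leq\theta\psi(\lambda_2)$ with $\theta=\lambda_1/\lambda_2$. The two arguments prove the same monotonicity, but yours is the more robust one: it requires no smoothness of $\psi$ at all, whereas the paper's computation silently relies on twice-differentiability of the cumulant generating function (true in the interior of the domain of finiteness, but left unjustified there), and the strict inequality $\psi''>0$ it invokes fails when $X$ is degenerate. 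The only cosmetic caveat is that your argument yields ``non-decreasing'' rather than ``strictly increasing,'' which is exactly what is needed in the application and is what the lemma can at best assert (for constant $X$ the ratio is constant).
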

\begin{proof}
Indeed, $\psi(\lambda)$ is convex and twice differentiable in $\lambda$. In particular, 
$\psi^{\prime\prime}(\lambda)>0$, $\psi(0)=0$ and 
$$
\bigg(\frac{\psi(\lambda)}{\lambda}\bigg)^{\prime}= \frac{\psi^\prime(\lambda)}{\lambda}- \frac{\psi^\prime(\lambda)}{\lambda^2}=\frac{\lambda\psi^\prime(\lambda)-\psi(\lambda)}{\lambda^2}.
$$
Since $\lambda\psi^\prime(\lambda)-\psi(\lambda)$ is $0$ at $\lambda=0$ and $(\lambda\psi^\prime(\lambda)-\psi(\lambda))^\prime=\lambda\psi^{\prime\prime}>0$, we conclude that 
$\lambda\mapsto \frac{\psi(\lambda)}{\lambda}$ is increasing in $\lambda>0$. 
\end{proof}
We now continue with the proof of Theorem \ref{thm-lbub} assuming Lemma \ref{lemma-last}.

\noindent {\bf{Proof of Theorem \ref{thm-lbub}:}}  Fix $\eta>0$. Note that Lemma \ref{lemma-monotonicity} and \eqref{thm-lbub9} imply that for $\lambda>0$ and large enough $n$, 
$$
\begin{aligned}
\eta+ \overline H(f) &\geq \frac 1\lambda \log \E_0\big[\e^{\lambda L(g_n,\cdot)}\big].
\end{aligned}
$$
For every $M>0$, let us remark that $x\mapsto \exp\big\{\mathrm{min}\{x,M\}\big\}$ is bounded and continuous. Then we can plug in the expression for $L(g_n,\cdot)$ from \eqref{thm-lbub5} in the last bound and recall the definition of $G_n$ from \eqref{def-grad-n} to get, 
$$
\begin{aligned}
\eta+ \overline H(f) &\geq\frac 1\lambda \log \E_0\bigg[\exp\bigg\{\lambda\log\bigg(\sum_{e\in\mathcal U_d}\pi_\omega(0,e)\exp\big\{f(\omega,e)+ \mathrm{min}\{G_n(\omega,e),M\}\big\}\bigg)\bigg\}\bigg]
\end{aligned}
$$
Let us also remark that $f$ is continuous and bounded in $\Omega_0\times\mathcal U_d$. If we now let $n\to\infty$, the first part of Lemma \ref{lemma-last} implies that $G_n(\omega,e)$ converges weakly to some $G(\omega,e)$. Hence,  %for some $G\in \mathcal G_\infty$,
\begin{equation}\label{thm-lbub10}
\eta+ \overline H(f) \geq\frac 1\lambda \log \E_0\bigg[\exp\bigg\{\lambda\log\bigg(\sum_{e\in\mathcal U_d}\pi_\omega(0,e)\exp\big\{f(\omega,e)+ \mathrm{min}\{G(\omega,e),M\}\big\}\bigg)\bigg\}\bigg].
\end{equation}
If we now let $M\uparrow\infty$ and use the monotone convergence theorem, we get 
$$
\eta+ \overline H(f) \geq\frac 1\lambda \log \E_0\bigg[\exp\bigg\{\lambda\log\bigg(\sum_{e\in\mathcal U_d}\pi_\omega(0,e)\exp\big\{f(\omega,e)+ G(\omega,e)\big\}\bigg)\bigg\}\bigg].
$$
If we now let $\lambda\to\infty$, we deduce that 
\begin{equation}\label{thm-lbub11}
\begin{aligned}
\eta+ \overline H(f) &\geq\mathrm{ess}\,\sup_{\P_0}\, \log\bigg(\sum_{e\in\mathcal U_d}\pi_\omega(0,e)\exp\big\{f(\omega,e)+ G(\omega,e)\big\}\bigg) \\
&\geq\inf_{G\in\mathcal G_\infty}\,\mathrm{ess}\,\sup_{\P_0}\, \log\bigg(\sum_{e\in\mathcal U_d}\pi_\omega(0,e)\exp\big\{f(\omega,e)+ G(\omega,e)\big\}\bigg),
\end{aligned}
\end{equation}
and in the last lower bound we invoked the second part of Lemma \ref{lemma-last} which asserts that $G\in \mathcal G_\infty$. Since the choice of $\eta>0$ was arbitrary, the last bound proves Theorem \ref{thm-lbub}, assuming Lemma \ref{lemma-last}.
\qed

We now the owe the reader the proof of Lemma \ref{lemma-last}. 

\noindent {\bf{Proof of Lemma \ref{lemma-last}:}} We will prove Lemma \ref{lemma-last} in two main steps. In the first step we will show that the sequence of formal gradients $G_n$ defined in \eqref{def-grad-n}
is uniformly integrable and converges along a subsequence to some $G$. In the next step we will show that the limit $G$ belongs to the class $\mathcal G_\infty$ introduced in Section \ref{subsec-classG}.

\noindent{\bf{Step 1: Proving $L^p(\P_0)$ boundedness and uniform integrability of $G_n$.}} First we want to prove that $G_n$ defined in \eqref{def-grad-n} is uniformly bounded in $L^p(\P_0)$ for every $p\geq 1$ and $G_n$ is also uniformly integrable. Note that by \eqref{thm-lbub8}, there exists a $\eps_n\to 0$ and a sequence $(g_n)_n$ of bounded measurable functions so that 
$$
\eps_n\log\E_0\bigg[\exp\bigg\{\eps_n^{-1} \log\bigg(\sum_{e\in\mathcal U_d}\pi_\omega(0,e)\exp\big\{f(\omega,e)+ g_n(\omega)-g_n(\tau_e\omega)\big\}\bigg)\bigg\}\bigg] \leq \overline H(f).
$$
Since $f$ is bounded, $f(\omega,e)\geq -\|f\|_\infty$ and by Lemma \ref{lemma-monotonicity}, in particular we have
\begin{equation}\label{thm-lbub12}
\E_0\bigg[\sum_{e\in\mathcal U_d}\pi_\omega(0,e)\exp\big\{g_n(\omega)-g_n(\tau_e\omega)\big\}\bigg] \leq \exp\{\overline H(f)+\|f||_\infty\}
\end{equation}
Recall that $\mathcal U_d=\{\pm u_i\}_{i=1}^d$
%NBJul17 Added are
are the nearest neighbors of the origin $0$. For every $u=\pm u_i$, let $\Omega_{0,u}$ denote the set of configurations $\omega$
such that both $0$ and $u$ are in the infinite cluster $\mathcal C_\infty(\omega)$ and the edge $0\leftrightarrow u$ is present (i.e., $\omega_u=1$). Then $\P(\Omega_{0,u})>0$ and we set $\P_{0,u}(\cdot)=\P(\cdot| \Omega_{0,u})$. 

Now for every $u=\pm u_i$, if the edge $0\leftrightarrow u$ is present, then $\pi_\omega(0,u)\geq 1/(2d)>0$ and for some constant $C>0$, \eqref{thm-lbub12} implies
\begin{equation}\label{thm-lbub13}
\E_{0,u}\big[\exp\big\{g_n(\omega)-g_n(\tau_u\omega)\big\}\big] \leq C.
\end{equation}
Now again by \eqref{thm-lbub12}, 

%NBJul17 I do not believe the equality here -- only an inequality with a constant, which is good enough. This is why I changed the display.

%$$
%\begin{aligned}
%\E_{0,u}\bigg[\sum_{e\in\mathcal U_d}\pi_{\tau_u\omega}(0,e)\exp\big\{g_n(\tau_u\omega)-g_n(\tau_e\tau_u\omega)\big\}\bigg] 
%&=\E_0\bigg[\sum_{e\in\mathcal U_d}\pi_\omega(0,e)\exp\big\{g_n(\omega)-g_n(\tau_e\omega)\big\}\bigg] \\
%&\leq \exp\{\overline H(f)+\|f||_\infty\}
%\end{aligned}
%$$

$$
\begin{aligned}
\E_{0,u}\bigg[\sum_{e\in\mathcal U_d}\pi_{\tau_u\omega}(0,e)\exp\big\{g_n(\tau_u\omega)-g_n(\tau_e\tau_u\omega)\big\}\bigg] 
&= \E_0\bigg[\sum_{e\in\mathcal U_d}\pi_\omega(0,e)\exp\big\{g_n(\omega)-g_n(\tau_e\omega)\big\} \big| \omega(0,-u)=1 \bigg] \\
&\leq C \E_0\bigg[\sum_{e\in\mathcal U_d}\pi_\omega(0,e)\exp\big\{g_n(\omega)-g_n(\tau_e\omega)\big\} \bigg]\\
&\leq C\exp\{\overline H(f)+\|f||_\infty\}.
\end{aligned}
$$

Now if the edge $0\leftrightarrow u$ is present in the configuration $\omega$ (i.e., $\omega_u=1$), the edge $-u\leftrightarrow 0$ is present in the configuration $\tau_u\omega$ (i.e., $\pi_{\tau_u\omega}(0,-u)\geq 1/(2d)>0$) and hence, again
\begin{equation}\label{thm-lbub14}
\E_{0,u}\big[\exp\big\{g_n(\tau_u\omega)-g_n(\tau_{-u}\tau_u\omega)\big\}\big] =\E_{0,u}\big[\exp\big\{g_n(\tau_u\omega)-g_n(\omega)\big\}\big]\leq C.
\end{equation}
Let $G_n$ be the sequence defined in \eqref{def-grad-n}, while $G_n^+$ and $G_n^-$ denote its positive and the negative parts respectively. Then $|G_n|=G_n^++G_n^-$ and it follows from \eqref{thm-lbub13} and \eqref{thm-lbub14} that the sequence $G_n$  is uniformly bounded in $L^p(\P_0)$ for every $p\geq 1$, and thus it is also uniformly integrable and uniformly tight under $\P_0$. Consequently, $G_n$ converges weakly in $L^p(\P_0)$ and also in distribuition (as random variables) along a subsequence to some $G$. 

\noindent{\bf{Step 2: Proving that $G\in \mathcal G_\infty$.}} To conclude that $G\in \mathcal G_\infty$, note that the fact that $G$ is bounded in the essential supremum norm in $\P_0$ follows from 
the first inequality in the display \eqref{thm-lbub11}$^{1}.$\footnotetext{$^1$ Note that the display \eqref{thm-lbub11} followed only from the first part of Lemma \ref{lemma-last} (i.e., the fact that that $G_n$ converges weakly along a subsequence to some $G$), which we have just proved in Step 1. In particular, \eqref{thm-lbub11} does not use the second part of Lemma \ref{lemma-last} which asserts that $G\in \mathcal G_\infty$, which we are proving currently in Step 2.} Furthermore, the fact that $G$ satisfies the closed loop property \eqref{closedloop} on the infinite cluster $\mathcal C_\infty$ also follows easily from 
the structure of $G_n$ which is a gradient field on the infinite cluster $\mathcal C_\infty$. Indeed, 
let $0 = x_0, x_1, \dots, x_j = 0$ be a closed path in the lattice and 
let us set
$$
B(x_0,\dots,x_j)= \{ 0 = x_0, x_1, \dots, x_j=0 \textup{ is a closed path in } \mathcal C_{\infty} \}
$$
and fix an arbitrarily measurable event $A$ in $\Omega_0$. Since $\P_0=\P(\cdot|\{0\leftrightarrow\infty\})$ and $\P$ is invariant under the shifts $\tau_x$ for every $x\in\Z^d$, it follows from the weak $L^2(\P_0)$ convergence of 
$G_n$ to $G$ from Lemma \ref{lemma-last}  that for each $1 \le i \le j$,   
$$
 \lim_{n \to \infty} \E_0 \bigg[\1_{A \cap B(x_0, x_1, \dots, x_j)} (\omega) G_n(\tau_{x_{i-1}}\omega, x_i - x_{i-1})\bigg] 
= \E_0 \bigg[\1_{A \cap B(x_0, x_1, \dots, x_j)} (\omega) G(\tau_{x_{i-1}}\omega, x_i - x_{i-1})\bigg] 
$$
%holds in $L^1(\P_{x_{i-1}})$ (not ). (uniform integrability?) 

%\textup{-a.s. } \omega$.  
%Taking subsequence is allowed if $G_n(\omega,e) \to G(\omega,e)$ in $L^1(\P_0)$. 
%Such subsequence can depend on the choice of $0 = x_0, x_1, \dots, x_j = 0$, but the limit $G$ is stable.   

By the definition of $G_n$ set forth in \eqref{def-grad-n},
$$
\sum_{i=1}^j G_n(\tau_{x_{i-1}}\omega, x_i - x_{i-1}) = 0, \, \mbox{ for }  \,\P_0\textup{-a.e. } \omega \in P(x_0, x_1, \dots, x_j). 
 $$
Hence, 
$$
\E_0\bigg[ \1_{A \cap P(x_0, x_1, \dots, x_j)} (\omega) \sum_{i = 1}^{j} G(\tau_{x_{i-1}}\omega, x_i - x_{i-1})\bigg] = 0. 
$$
Since $A$ is taken arbitrarily, 
$$
\1_{B(x_0, x_1, \dots, x_j)} (\omega) \sum_{i = 1}^{j} G(\tau_{x_{i-1}}\omega, x_i - x_{i-1}) = 0, \ \P_0\textup{-a.s. }\omega. 
$$
This proves the closed loop property of $G$.

%Note that for a non-negative random variable $X$ and some $M>0$, if $\E(X^n)\leq M^n$ for $n\in\mathbb N$, then the essential supremum norm of $X$ is bounded above by $M$.}

It remains to check the induced zero mean property \eqref{meanzero} of $G$. The following lemma will finish the proof of Lemma \ref{lemma-last}. Hence, the proof of
Theorem \ref{thm-lbub} will also be concluded. 

\begin{lemma}[Induced mean zero property of the limit $G$]\label{lemma-last-last}
The limiting gradient $G$ appearing in Lemma \ref{lemma-last} satisfies the induced mean zero property defined in \eqref{meanzero}. Hence, $G\in \mathcal G_\infty$.
\end{lemma}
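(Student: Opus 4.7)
The plan is to establish $\E_0[V(\cdot,v_e)]=0$ by extracting two different asymptotics of $\tfrac{1}{k}V(\omega,n_k(\omega,e)\,e)$ as $k\to\infty$: one from Birkhoff's ergodic theorem applied to the $\P_0$-measure preserving ergodic induced shift $\sigma_e$, and one from the sublinear growth of the corrector on $\mathcal C_\infty$ already established in Theorem \ref{sublinearthm}. The common limit must then be $0$.

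The key algebraic input is the additive cocycle identity
\begin{equation*}
V\bigl(\omega,n_k(\omega,e)\,e\bigr) \,=\, \sum_{j=0}^{k-1} V(\cdot,v_e)\circ\sigma_e^{j}(\omega), \qquad k\geq 1,
\end{equation*}
which I would derive by induction on $k$ using the shift-covariance \eqref{def-shiftcov} of the corrector, the fact that $\sigma_e\omega=\tau_{v_e}\omega$, and the recursion \eqref{def-nk} for the iterated first arrivals. By construction each iterate $n_j(\omega,e)\,e$ lies in $\mathcal C_\infty(\omega)$, so every corrector value appearing above is well-defined through the path-independent definition \eqref{Psidef}.

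Next I would verify $V(\cdot,v_e)\in L^1(\P_0)$. Step 2 of the proof of Lemma \ref{lemma-last} has already established the essential boundedness $\|G\|_{L^\infty(\P_0)}\leq A$ (from the first inequality in \eqref{thm-lbub11} together with the reverse-edge antisymmetry $G(\omega,u)=-G(\tau_u\omega,-u)$ forced by the closed-loop property), so choosing the shortest open path from $0$ to $v_e$, of length $\ell(\omega)$, yields the deterministic bound $|V(\omega,v_e)|\leq A\,\ell(\omega)$. Since $\ell$ has exponential tails under $\P_0$ by Lemma \ref{lemma-ell}, it follows that $V(\cdot,v_e)\in L^p(\P_0)$ for every $p\geq 1$.

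With integrability in hand, Birkhoff's ergodic theorem applied to $V(\cdot,v_e)\in L^1(\P_0)$ under the $\P_0$-preserving ergodic $\sigma_e$ yields $\tfrac{1}{k}V(\omega,n_k(\omega,e)\,e)\to\E_0[V(\cdot,v_e)]$ $\P_0$-almost surely. On the other hand $k(\cdot,e)\in L^1(\P_0)$ (since $k(\cdot,e)\leq\ell$), and a second application of Birkhoff identifies $n_k(\omega,e)/k$ with a convergent ergodic average, so the lattice points $n_k(\omega,e)\,e\in\mathcal C_\infty(\omega)$ grow at most linearly in $k$. Theorem \ref{sublinearthm} then forces $\tfrac{1}{k}V(\omega,n_k(\omega,e)\,e)\to 0$ $\P_0$-almost surely. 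Equating the two limits delivers $\E_0[V(\cdot,v_e)]=0$ for every $e\in\mathcal U_d$, which is precisely the induced mean-zero condition \eqref{meanzero}, and therefore $G\in\mathcal G_\infty$. I do not foresee a substantial obstacle here: all the analytic machinery (sublinearity, the chemical-distance and first-arrival tail bounds, and the ergodicity of $\sigma_e$) is already in place, and the only routine step is the $L^1$ bound on $V(\cdot,v_e)$, which is immediate from the essential boundedness of $G$ and the exponential tails of $\ell$.
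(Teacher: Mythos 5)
Your argument is circular. The second asymptotic you extract, $\tfrac{1}{k}V(\omega,n_k(\omega,e)\,e)\to 0$, rests on Theorem \ref{sublinearthm}, but that theorem is stated and proved only for $G\in\mathcal G_\infty$, i.e.\ for gradients that already satisfy the induced mean-zero property \eqref{meanzero} --- precisely the property you are trying to establish for the limit $G$. The dependence is not cosmetic: the proof of Theorem \ref{sublinearthm} derives its core input \eqref{eq00} by applying Birkhoff's theorem to $F=V(\cdot,v_e)$ and using $\E_0[F]=0$, which \emph{is} \eqref{meanzero}; without it, \eqref{eq01} and hence sublinearity fail. Indeed, your own first asymptotic shows that if $\E_0[V(\cdot,v_e)]=m\neq 0$, then $V(\omega,n_k(\omega,e)\,e)\sim mk$ while $|n_k(\omega,e)|$ grows linearly in $k$, so $V_G$ would grow linearly along that sequence: sublinearity of $V_G$ is essentially equivalent to \eqref{meanzero} and cannot be assumed in advance. (The cocycle identity, the bound $|V(\omega,v_e)|\leq A\,\ell$ and the resulting integrability are all fine; the gap lies solely in the source of the zero limit.)

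The paper avoids this by working at the pre-limit level, where the mean-zero identity is exact: since $\sigma_e$ preserves $\P_0$ and each $g_n$ is bounded, $\E_0\big[g_n\circ\sigma_e-g_n\big]=0$, and the telescoping sum of $G_n$ along an open path from $0$ to $v_e$ equals $g_n(\omega)-g_n(\sigma_e\omega)$. The real content of the lemma is then to justify exchanging the weak limit $G_n\to G$ with the expectation over the path of random length $\ell$; this is done by truncating on $\{\ell<M\}$ (where the exchange follows from weak $L^2$ convergence over the finitely many fixed paths) and bounding the contribution of $\{\ell\geq M\}$ via H\"older's inequality, the uniform $L^q(\P_0)$ bounds on $G_n$ from \eqref{thm-lbub13}--\eqref{thm-lbub14}, the exponential tail of $\ell$ from Lemma \ref{lemma-ell}, and the $(2d)^j$ bound on the number of paths of length $j$, yielding a tail of order $\sum_{j\geq M}j\rho^j\to 0$. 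To keep your ergodic-average strategy you would first have to prove sublinearity of $V_G$ without using \eqref{meanzero}, which is not available here.
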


\begin{proof}
Let us fix $e\in\mathcal U_d$
and recall that $\ell$ denotes the graph distance from $0$ to $v_e=k(\omega,e)e$, and fix $(x_0=0,x_1,\ldots,x_\ell)$ a shortest open path to $v_e=k(\omega,e)e$. % chosen in a way which is measurable w.r.t. $\mathcal B_\ell$. 
We also recall from Section \ref{sec-classG} that the induced shift $\sigma_e\colon \Omega_0\rightarrow\Omega_0$ defined by $\sigma_e(\omega)=\tau_{k(\omega,e)e}(\omega)$ is $\P_0$-measure preserving. Hence, for every bounded measurable $g_n$
\begin{equation}\label{eq:gdifzero}
\E_0\big[g_n(\tau_{k(\omega,e)e}\omega)-g_n(\omega)\big]=
\E_0\big[g_n\circ\sigma_e-g_n\big]=0.
\end{equation}
Let us write,
$$
F_M=\E_0\bigg[V(\omega,\,k(\omega,e)e)\, \1_{\ell<M}\bigg]=
 \E_0\bigg[\sum_{j=0}^{\ell-1}\, G\big(\tau_{x_i}\omega, x_{i+1}-x_i\big)\,\, \1_{\ell<M}\bigg].
$$
%Also,
We claim that, 
\begin{equation}\label{claim_FM}
F_M\to 0 \quad\mbox { as }\,\,M\to\infty.
\end{equation}
Note that, by \eqref{def-grad-n}, %by \eqref{F_n_def} and the definition of $\widetilde g_n$ in \eqref{G_eps_def},
$$
F_M=\E_0\bigg[\1_{\ell\leq M}\,\sum_{j=0}^{\ell-1} \lim_{n\to\infty}\, G_n(\tau_{x_j}\omega, x_{j+1}-x_j)\bigg]
$$
We would like show that, indeed,
\begin{equation}\label{claim-FM-0}
F_M
=\E_0\bigg[\1_{\ell<M}\,\,\lim_{n\to\infty}\big(g_n(\tau_{k(\omega,e)e}\omega)-g_n(\omega)\big)\bigg],
\end{equation}
and consequently use \eqref{eq:gdifzero} to conclude
\begin{equation}\label{claim-FM-1}
\big|F_M\big|\leq\liminf_{n\to\infty}\bigg|\E_0\bigg[
{\1}_{\ell\geq M} \big(g_n(\tau_{k(\omega,e)e}\omega)-g_n(\omega)\big)\bigg]
\bigg|.
\end{equation}
Indeed, let us fix an integer $j < M$ and a finite path $0 = x_0, x_1, \dots, x_j$  from the origin. 
Let 
$$
B(x_0, x_1, \dots, x_j ) := \{\ell = j, 0 = x_0, x_1, \dots, x_j \textup{ is a path in } \mathcal C_{\infty} \}
$$
Then, %the event $\ell = j$ is the finite union of 
$$
 \{\ell = j\} = \bigcup_{0 = x_0, x_1, \dots, x_j}  \{\ell = j\} \cap B(x_0, x_1, \dots, x_j ).  
 $$
We can choose $\widetilde B(x_0, x_1, \dots, x_j ) \subset B(x_0, x_1, \dots, x_j )$ such that %$ \{\ell = j\} \cap \widetilde P(x_0, x_1, \dots, x_j)$ are {\bf disjoint} and 
we have a finite and disjoint union
\begin{equation}\label{eq-tilde-B}
\{\ell = j\} = \bigcup_{0 = x_0, x_1, \dots, x_j}  \{\ell = j\} \cap \widetilde B(x_0, x_1, \dots, x_j ).  
\end{equation}
Then it suffices to show that for each fixed path $0 = x_0, x_1, \dots, x_j$, 
$$
\lim_{n \to \infty}  
\E_0\bigg[\1_{\{\ell = j\} \cap \widetilde B(x_0, x_1, \dots, x_j )}\,\,\big(g_n(\tau_{x_j}\omega)-g_n(\omega)\big)\bigg] 
= \E_0\bigg[\1_{\{\ell = j\} \cap \widetilde B(x_0, x_1, \dots, x_j )}\,\,\sum_{i = 1}^{j} G (\tau_{x_{i-1}}\omega, x_i - x_{i-1})\bigg],
$$
Equivalently, we need to show that for each $i=1,\dots,j$, 
$$
\lim_{n \to \infty}  
\E_0\bigg[\1_{\{\ell = j\} \cap \widetilde B(x_0, x_1, \dots, x_j )}\,\,\big(g_n(\tau_{x_i}\omega) - g_n(\tau_{x_{i-1}}\omega)\big)\bigg] 
= \E_0\bigg[\1_{\{\ell = j\} \cap \widetilde B(x_0, x_1, \dots, x_j )}\,\,G (\tau_{x_{i-1}}\omega, x_i - x_{i-1})\bigg],
$$
But this statement follows from the fact that
$$
\1_{\{\ell = j\} \cap \widetilde B(x_0, x_1, \dots, x_j )}\,\,\big(g_n(\tau_{x_i}\omega) - g_n(\tau_{x_{i-1}}\omega)\big) = \1_{\{\ell = j\} \cap \widetilde B(x_0, x_1, \dots, x_j )}
G_n (\tau_{x_{i-1}}\omega, x_i - x_{i-1}). 
$$
and Lemma \ref{lemma-last} which implies convergence of $G_n (\tau_{x_{i-1}}\omega, x_i - x_{i-1})$  to $G (\tau_{x_{i-1}}\omega, x_i - x_{i-1})$ along a subsequence weakly in $L^2$ under the measure $\P(\cdot | x_{i-1}, x_i  \in \mathcal{C}_{\infty})$. This proves \eqref{claim-FM-0}, which also implies \eqref{claim-FM-1} when combined with \eqref{eq:gdifzero}.

%Now, to estimate the \lq RHS\rq  \,\,we note that, for $k$ large enough, %using the same calculation as in \eqref{eq:frcm}, and then the uniform bound on essential supremum norm of $G_k$,
%{\textcolor{blue}{}
We now need to estimate the right hand side of \eqref{claim-FM-1}, which is  %However, denoting the essential supremum norm of $\widetilde g_n$ by $M$,
\begin{equation}\label{eq-claim-FM-3}
\begin{aligned}
\bigg|\E_0\bigg[
{\1}_{\ell\geq M} \bigg(g_n(\tau_{k(\omega,e)e}\omega)-g_n(\omega)\bigg)\bigg] \bigg|
&=\bigg|\E_0\bigg[
\sum_{j=M}^\infty{\1}_{\ell= j} \bigg(\sum_{i=0}^{j-1} \big(g_n(\tau_{x_i}\omega)-g_n(\tau_{x_{i+1}}\omega)\big)\bigg)\bigg] \bigg|\\
&\leq\E_0\bigg[
\sum_{j=M}^\infty{\1}_{\ell= j} \bigg(\sum_{i=0}^{j-1} \big|G_n(\tau_{x_i}\omega,x_{i+1}-x_i)\big|\bigg)\bigg] \bigg| \\
\end{aligned}
\end{equation}
For each $j\geq M$ we will now estimate $\E_0\bigg[{\1}_{\ell= j} \bigg(\sum_{i=0}^{j-1} |G_n(\tau_{x_i}\omega, x_{i+1}-x_i)|\bigg)\bigg]$. 
For this, it is enough to estimate 
$$
\sum_{0 = x_0, \dots, x_j} \E_0\bigg[{\1}_{\ell= j \cap \widetilde P (x_0, \dots, x_j)} \bigg(\sum_{i=0}^{j-1} G_n(\tau_{x_i}\omega, x_{i+1}-x_i)\bigg)\bigg]
$$
where $\widetilde B(x_0,\dots,x_j)$  is the event defined before so that \eqref{eq-tilde-B} holds. 

Then by Lemma \ref{lemma-ell} , 
we can choose two constants $c_1, c_2$ such that for every $j \ge 1$, 
$$
\P_0 (\ell = j) \le c_1 \exp(- c_2 j). 
 $$
Furthermore, let $p, q \geq 1$ be such that $(1/p) + (1/q) =1$ and 
\begin{equation}\label{eq-rho}
\rho:=\frac{c_1 (2d)^{1/q}} {\exp(c_2 / p)} <1. 
\end{equation}
Now by the H\"older's inequality, writing $|G_n|=G_n^+ +  G_n^-$ and invoking \eqref{thm-lbub13} and \eqref{thm-lbub14} again, we get
$$
\begin{aligned}
& \E_0\bigg[{\1}_{\{\ell= j\} \cap \widetilde B (x_0, \dots, x_j)} |G_n(\tau_{x_i}\omega, x_{i+1}-x_i)| \bigg] 
\\
&\leq \E_0\bigg[ {\1}_{x_{i}, x_{i+1} \in \mathcal{C}_{\infty}} |G_n(\tau_{x_i}\omega, x_{i+1}-x_i)|^q \bigg]^{1/q}  \P_0 \bigg[\{\ell= j\} \cap \widetilde B (x_0, \dots, x_j))\bigg]^{1/p}. 
\\
&\leq C_q \P_0 \bigg[\{\ell= j\} \cap \widetilde B (x_0, \dots, x_j)\bigg]^{1/p}. 
\end{aligned}
$$

Since the number of paths $0 = x_0, \dots, x_j$ is bounded by $(2d)^j$, by the H\"older's inequality, 
$$
\sum_{0 = x_0, \dots, x_j} \P_0 (\ell= j \cap \widetilde P (x_0, \dots, x_j))^{1/p} \le \left(c_1 (2d)^{1/q}\exp(-c_2 / p)\right)^j.
$$

Hence,
$$
\begin{aligned}
 \sum_{0 = x_0, \dots, x_j} \E_0\bigg[{\1}_{\ell= j \cap \widetilde P (x_0, \dots, x_j)} \bigg(\sum_{i=0}^{j-1} G_n(\tau_{x_i}\omega, x_{i+1}-x_i)\bigg)\bigg] 
 &\leq j \left(c_1 (2d)^{1/q}\exp(-c_2 / p)\right)^j 
 \\
 &=j \rho^j
 \end{aligned}
 $$
Since $\rho<1$ by our assumption in \eqref{eq-rho}, the claim \eqref{claim_FM} follows from \eqref{eq-claim-FM-3} and \eqref{claim-FM-1}.

%&\leq K\,\, \E_0\big[
%\sum_{j=M}^\infty j {\1}_{\ell= j}\big],
%\end{aligned}
%$$
%and the last inequality follows from \eqref{thm-lbub13} and \eqref{thm-lbub14} for some constant $K>0$. But by \textcolor{blue}{Assumption 4}  the last term in the above display goes to $0$ as $M\to\infty$. This proves the claim \eqref{claim_FM}.

Recall the definition of the corrector $V(\omega,k(\omega,e)e)=\sum_{i=0}^{\ell-1} G(\tau_{x_i}\omega,x_{i+1}-x_i)$ corresponding to the limit $G$ of $G_n$. To prove the induced mean zero property \eqref{meanzero} 
for $V$, we have to show that $\E_0\big[V(\omega,k(\omega,e)e)\big]=0$. For this, we note that $V(\omega,k(\omega,e)e)$ is the 
almost sure pointwise limit of $V(\omega,k(\omega,e)e){\1}_{\ell\leq M}$ as $M\to \infty$.
Furthermore, 
$|V(\omega,k(\omega,e)e){\1}_{\ell\leq M}|\leq \|G\|_\infty\ell$ for all $M$ and $\E_0 (\ell) <\infty$ by %\textcolor{red}{(\ref{dbdd})},  
Lemma \ref{lemma-ell},
so by the dominated convergence theorem
$$
\E_0\big[V(\omega,k(\omega,e)e)\big] = \lim_{M\to\infty}\E_0\big[V(\omega,k(\omega,e)e)\,{\1}_{\ell\leq M}\big] = \lim_{M\to\infty}F_M= 0,
$$
while the last identity follows from \eqref{claim_FM}. We conclude that $G$ satisfies the induced mean zero property defined in \eqref{meanzero}. Hence, $G\in \mathcal G_\infty$ and
the proofs of Lemma \ref{lemma-last-last} and that of Lemma \ref{lemma-last} are finished. This also concludes the proof of Theorem \ref{thm-lbub}.
\end{proof}

\begin{remark}\label{rmk-simplify}
As mentioned in Section \ref{sec-results-3}, let us point out that in the case of an elliptic RWRE, our arguments based on entropic coercivity used in Lemma \ref{thm-lbub-lemma1} and Lemma \ref{thm-lbub-lemma2}
simplify the earlier approach used in \cite{R06,Y08}. Indeed, if $\P$ continues to denote the law of a stationary and ergodic random environment with $\pi$ being the transition  
probabilities of a random walk in $\Z^d$ such that $\int |\log\pi|^p \,\d\P<\infty$ with $p>d$,  then following our arguments before, we can set (as in \eqref{def-grad-n}) $G_n(\omega,e)=g_n(\omega)-g_n(\tau_e\omega)$.
Then following Step 1 of the proof of Lemma \ref{lemma-last} we can show that $\{G_n^+\}_n$ as well as $\{G_n^-\}_n$, and hence $\{G_n\}_n$, all remain bounded in $L^p(\P)$.
%we can conclude, following Step 1 of the proof of Lemma \ref{lemma-last} that, for every $k\in\N$, we have $\E^\P[\exp\{k(\log\pi+G_n^+)\}]\leq \exp\{k(\overline H(f)+\|f\|_\infty)\}$. Letting $k\to\infty$, we have that $\P$-almost surely, $G_n^+\leq C+(-\log\pi)$. With our assumption it follows that $G_n^+$ (and likewise, $G_n^-$) is $L^p(\P)$ bounded. 
Then every weak limit $G$ of $G_n$ defined here is immediately a gradient so that the closed loop as well as the mean zero property (i.e. $\E^\P(G)=0$) is readily satisfied, thanks to translation invariance of $\P$. 
\end{remark}

\section{Proofs of Theorem \ref{thmmomgen}, Theorem \ref{thmlevel2}, Corollary \ref{thmlevel1} and Lemma \ref{nonlsc}}\label{sec-6}

\noindent {\bf{Proof of Theorem \ref{thmmomgen}:}} The proof of Theorem \ref{thmmomgen} is readily finished by combining the lower bound from Corollary \ref{corlb}, the upper bound from Lemma \ref{ub} and the equivalence of bounds 
from Theorem \ref{thm-lbub}.
\qed

\noindent {\bf{Proof of Theorem \ref{thmlevel2}:}} By Theorem \ref{thmmomgen}, 
$$
\lim_{n\to\infty} \frac 1n \log E^{\pi,\omega}_0 \big\{\exp\big\{n \langle f, \mathfrak L_n\rangle\big\}\big\} = \sup_{\mu\in\Mcal_1^\star} \big\{ \langle f, \mu\rangle- \mathfrak I(\mu)\big\}= \sup_{\mu\in\Mcal_1(\Omega_0 \times  \mathcal U_d)} \big\{ \langle f, \mu\rangle- \mathfrak I(\mu)\big\}= \mathfrak I^\star(\mu).
$$
Since $\Omega_0$ is a closed subset of $\Omega=\{0,1\}^{\mathbb B_d}$ and hence, is compact, $\Mcal_1(\Omega_0 \times  \mathcal U_d)$ is compact in the weak topology. The upper bound \eqref{ldpub} for all closed sets now follows from
Theorem 4.5.3 \cite{DZ98}. The lower bound \eqref{ldplb} has been proven in Lemma \ref{lemmalb}.
\qed

\noindent {\bf{Proof of Corollary \ref{thmlevel1}}}: The claim follows by contraction principle 
once we show that $\inf_{\xi(\mu)=x} \mathfrak I(\mu)= \inf_{\xi(\mu)=x} \mathfrak I^{\star\star}(\mu)$. This is easy to check using
convexity of $\mathfrak I$ and $\mathfrak I^{\star\star}$.
\qed
\vspace{5mm}

\noindent {\bf{Proof of Lemma \ref{nonlsc}: The zero speed regime of SRWPC under a drift.}}

\noindent For every $\beta>1$, we define
$$
\pi^{\ssup \beta}(\omega,e)= \frac{V(e) \1_{\{\omega(e)=1\}}} {\sum_{e^\prime\in \mathcal U_d}V(e^\prime) \1_{\{\omega(e^\prime)=1\}}} \in \widetilde\Pi,
$$
where
$$
V(e)=
\begin{cases}
\beta>1 \qquad\mbox{if } e=e_1,\\
1 \qquad\qquad\mbox{else}.
\end{cases}
$$
Let $X_n^{\ssup \beta}$ be the Markov chain with transition probabilities $\pi^{\ssup\beta}$. By \cite{BGP02} and \cite{Sz02}, there exists $\beta_u=\beta_u(p,d)>0$ so that for $\beta>\beta_u$, the limiting speed 
$$
\lim_{n\to\infty} \frac{X_n^{\ssup \beta}}{n},
$$
which exists and is an almost sure constant is zero. For the Bernoulli (bond and site) percolation cases the last statement follows from \cite{BGP02} and \cite[Theorem 4.1]{Sz02}. Since the finite energy property (recall the proof of Lemma \ref{lemma-ell}  for the random cluster model) holds for the random-cluster model \cite{G06} and level sets of Gaussian free field \cite[Remark 1.6]{RS13}, the proof of \cite[Theorem 4.1]{Sz02} is applicable, while
in the case of random interlacements (for which the finite energy property fails), the statement regarding the zero speed of the random walk $X_n^{\ssup \beta}$ follows from \cite{FP16}.

\noindent Then, by Kesten's lemma (see \cite{k75}),
there exists no $\phi\in L^1(\P_0)$ so that $(\pi^{\ssup\beta},\phi)\in \mathcal E$. We split the proof into two cases.

Suppose there exists a neighborhood $\mathfrak u$ of $\pi^{\ssup\beta}$ so that every $\tilde\pi^{\ssup\beta} \in \overline {\mathfrak u}$ fails to have an invariant density. Then, 
for every $\tilde\pi^{\ssup\beta} \in \overline {\mathfrak u}$ and any probability density $\phi\in L^1(\P_0)$, let $\mu_\beta$ be the corresponding element in $\Mcal_1(\Omega_0\times\mathcal U_d)$ (i.e., 
$\d\mu_\beta(\omega,e)=\pi^{\ssup\beta}(\omega,e)\phi(\omega)\d\P_0(\omega)$). Since 
$$(\tilde\pi^{\ssup\beta},\phi)\notin\mathcal E,
$$
by Lemma \ref{onetoone}, $\mu_\beta\notin   \Mcal_1^\star$. Then, $\mathfrak I(\mu_\beta)=\infty$ by \eqref{Idef}. If $\mathfrak I$ were lower semicontinuous on $\Mcal_1(\Omega_0\times \mathcal U_d)$, then $\mathfrak I=\mathfrak I^{\star\star}$ and by Theorem \ref{thmlevel2},
\begin{equation}\label{eq:superexp}
P^{\pi,\omega}_0\big\{\mathfrak L_n \in \mathfrak n\big\}
\end{equation}
would decay super-exponentially for $\P_0$- almost every $\omega\in \Omega_0$, with $\mathfrak n$ being some neighborhood of $\mu_\beta$. However, since for every $\omega$, the relative entropy of $\pi^{\ssup\beta}(\omega,\cdot)$ w.r.t. $\pi_\omega(0,\cdot)$ is bounded below and above, the probability in \eqref{eq:superexp} decays exponentially and we have a contradiction.

Assume that there exists no such neighborhood $\mathfrak u$ of $\pi^{\ssup\beta}$. Let $\tilde\pi_n\to\pi^{\ssup\beta}$ such that for all $n\in \N$, $\tilde\pi_n$ has an invariant density $\phi_n$ and $(\tilde\pi_n,\phi_n)\in\mathcal E$. If $(\mu_n)_n$ is the sequence corresponding to $(\tilde\pi_n,\phi_n)$, since $\Mcal_1(\Omega_0\times\mathcal U_d)$ is compact, $\mu_n\Rightarrow\mu_\beta$ weakly along a subsequence. However, by our choice of $\beta>\beta_u$, $(\pi^{\ssup\beta},\phi)\notin \mathcal E$ for every density $\phi$ and hence $\mu_\beta\notin \Mcal_1^\star$ and $\mathfrak I(\mu_\beta)=\infty$. But,
$$
\lim_{n\to\infty} \mathfrak I(\mu_n)= \int \d\P_0 \phi(\omega) \sum_{e\in \mathcal U_d} \pi^{\ssup\beta}(\omega,e) \log \frac{\pi^{\ssup\beta}(\omega,e)}{\pi_\omega(0,e)},
$$
which is clearly finite. This proves that $\mathfrak I$ is not lower semicontinuous. 
%\end{enumerate}
\qed

{\bf{Acknowledgments.}} Most of the work in this paper was carried out when the second author was a visiting assistant professor at the Courant Institute of Mathematical Sciences, New York University
in the academic year 2015-2016, and its hospitality is gratefully acknowledged. The third author was supported by Grant-in-Aid for Research Activity Start-up (15H06311) and Grant-in-Aid for JSPS Fellows (16J04213). 
The authors would like to thank S. R. S. Varadhan for reading an early draft of the manuscript and many valuable suggestions. 
We also thank Firas Rassoul-Agha and Atilla Yilmaz for their comments on a preprint version of the present article. Finally, we would like to 
thank an anonymous referee for a very careful reading and many valuable suggestions that led to a more elaborate version of our manuscript.

%\newpage

\end{document}